\tikzset{external/system call={latex \tikzexternalcheckshellescape -halt-on-error
		-interaction=batchmode -jobname "\image" "\texsource";
		dvips -o "\image".ps "\image".dvi;
		ps2eps "\image.ps"}}
\tikzset{
	my box/.style = {
		, line cap = round
		, line join = round
	}
}
\newcommand{\highlight}[3]{
	\path [my box, line width = #1, draw = #2] #3;
	
	\pgfmathsetmacro{\innerlinewidth}{0.9 * #1}
	\path [my box, line width = \innerlinewidth, draw = #2!20] #3;
}
\theoremstyle{plain}
\newtheorem{theorem}{Theorem}[section]
\newtheorem{prop}[theorem]{Proposition}
\newtheorem{lemma}[theorem]{Lemma}
\newtheorem{coro}[theorem]{Corollary}
\theoremstyle{definition}
\newtheorem{remark}[theorem]{Remark}
\newtheorem{definition}[theorem]{Definition}
\numberwithin{equation}{section}
\NewDocumentEnvironment{sequation}{O{\small}b}
{
	\yufip_sequation:nnn {equation}{#1}{#2}
}{}
\NewDocumentEnvironment{sequation*}{O{\small}b}
{
	\yufip_sequation:nnn {equation*}{#1}{#2}
}{}
\newcommand{\Z}{\mathbb Z}
\newcommand{\N}{\mathbb N}
\renewcommand{\P}{\mathbb P}
\newcommand{\E}{\mathbb E}
\newcommand{\cA}{\mathcal{A}}
\newcommand{\cL}{\mathcal{L}}
\newcommand{\cM}{\mathcal{M}}
\newcommand{\cY}{\mathcal{Y}}
\renewcommand{\epsilon}{\varepsilon}
\renewcommand{\rho}{\varrho}
\newcommand{\ee}{\mathrm{e}}
\newcommand{\dd}{\, \mathrm{d}}
\renewcommand{\geq}{\geqslant}
\renewcommand{\leq}{\leqslant}
\renewcommand{\t}{\vartheta}
\renewcommand{\tt}{\vartheta\nu_1}
\renewcommand{\a}{\alpha}
\renewcommand{\b}{\beta}
\newcommand{\s}{\sigma}
\newcommand{\defeq}{\mathrel{\mathop:}=}
\newcommand{\eqdef}{=\mathrel{\mathop:}} 
\newcommand{\scO}{{\scriptstyle\mathcal{O}}}
\newcommand{\one}{\mathbbm{1}}
\definecolor{darkgreen}{rgb}{0.0, 0.35, 0.13}
\definecolor{lightbrown}{rgb}{0.71, 0.4, 0.11}
\begin{document}
\title{The mutation process on the ancestral line under selection}
\author{E. Baake}
\address[E. Baake]{Faculty of Technology, Bielefeld University, \newline\hspace*{\parindent}Postbox 100131, 33501 Bielefeld, Germany}
\email{ebaake@techfak.uni-bielefeld.de}
\author{F. Cordero}
\address[F. Cordero]{Faculty of Technology, Bielefeld University, \newline\hspace*{\parindent}Postbox 100131, 33501 Bielefeld, Germany}
\email{fcordero@techfak.uni-bielefeld.de}
\author{E. Di Gaspero}
\address[E. Di Gaspero]{Faculty of Technology, Bielefeld University, \newline\hspace*{\parindent}Postbox 100131, 33501 Bielefeld, Germany}
\email{edigaspero@techfak.uni-bielefeld.de}
 
\maketitle

\noindent \emph{keywords:} Moran model, pruned lookdown ancestral selection graph, ancestral line, mutation process, substitution process, phylogeny and population genetics

\bigskip


\section{Abstract}
We consider the Moran model of population genetics with two types, mutation, and selection, and investigate the line of descent of a randomly-sampled individual from a contemporary population. We trace this ancestral line back into the distant past, far beyond the most recent common ancestor of the population (thus connecting population genetics to phylogeny), and analyse the mutation process along this line.

To this end, we use the pruned lookdown ancestral selection graph \citep{len}, which consists of a set of potential ancestors of the sampled individual at any given time. Relative to the neutral case (that is, without selection), we obtain a general bias towards the beneficial type, an increase in the beneficial mutation rate, and a decrease in the deleterious mutation rate. This sheds new light on previous analytical results. We discuss our findings in the light of a
well-known observation at the interface of phylogeny and population genetics, namely, the difference in the mutation rates (or, more precisely, mutation fluxes) estimated via phylogenetic methods relative to those observed in pedigree studies.


\section{Introduction}\label{sec_intro}

\emph{Molecular phylogeny} is concerned with species trees and basically works according to the following principle (for a review of the field, see~\citet[Chs.~14 and 15]{Ewens_Grant_05}). One starts with a collection of contemporary species, samples one individual of each species, and sequences a gene (or a collection of genes) in each of these individuals. From this sequence information, one infers the species tree. This tree lives on the long time scale of (usually many) millions of years.
If one zooms in on the species tree (see Figure~\ref{bigtree_mut2}), one sees population genetics emerge: species (or populations) consist of individuals, and the individual genealogies, on a shorter time scale of, say, a few hundreds of thousands of years, are embedded within the species tree. The connection is made by the ancestral line that starts from the randomly-chosen individual in each species and goes back into the past, beyond the most recent common ancestor of the population, and meets ancestral lines coming from other species.
\begin{figure}
	\begin{center}
		\includegraphics[width=0.9\textwidth]{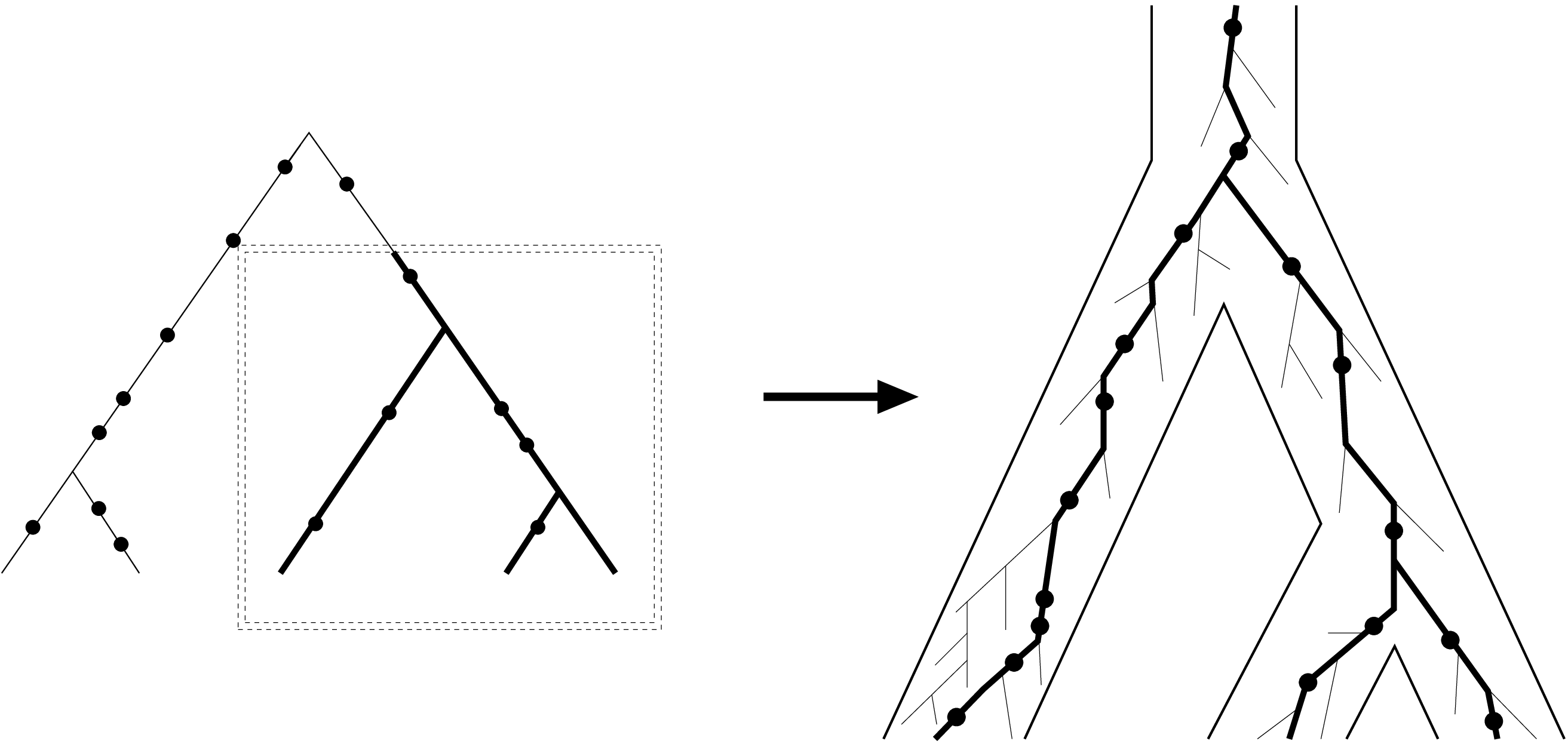}
	\end{center}
	\caption{\label{bigtree_mut2} Phylogeny and population genetics. The bold ancestral line in the right panel makes the connection. The bullets indicate mutation events.}
\end{figure}

\emph{Coalescence theory} is concerned with the individual genealogies within species. It is a major part of modern population genetics theory; for a review of the area, see~\citet{Durrett} or~\citet{Wakeley}. Our goal in this article is to investigate the ancestral line just described, and in this way shed some light on the interface between molecular phylogeny and population genetics. Specifically, we will be concerned with the mutation process on the ancestral line in the presence of selection. Altogether, not much literature is available on the effect of selection in phylogeny; the work of~\citet{Adams_etal_18} and~\citet{Wascher_Kubatko_22} are notable exceptions. We here address the well-known observation that mutation rates (or, more precisely, mutation fluxes) estimated via pedigree studies (that is, via direct comparison of close relatives) in human mitochondrial DNA tend to be about a factor of ten higher than those inferred via phylogenetic methods (\citet{Connell22} and references therein; see specifically the early work by~\citet{Parsons97},~\citet{Sigurdardottir00}, and~\citet{Santos05}). One of the many possible explanations discussed is purifying selection~\citep{Parsons97,Sigurdardottir00,Santos05,Connell22}. This means that a deleterious mutation is selected against; thus an individual carrying this mutation has a lower chance of contributing descendants to future generations than a wildtype individual. So such mutations, although visible in pedigree studies, have a smaller probability of being observed in phylogenetic analyses. Let us note that mutations that contribute to species divergence are often referred to as \emph{substitutions} in the biological literature. 

In this article, we investigate the mutation process on the ancestral line in a prototype model of population genetics, namely the Moran model with two types under mutation and selection. We first describe the model forward in time,
starting with the finite-population version and proceeding to its deterministic and diffusion limits (Section~\ref{sec_model}). In Section~\ref{section_pldASG}, we introduce our main tool, namely the pruned lookdown ancestral selection graph (see~\citet{Baake} for a review), which delivers the type distribution of the ancestor, in the distant past, of an individual sampled randomly from the present population. In Section~\ref{section_backward}, we use this concept to tackle the mutation process on the ancestral line. In the diffusion limit, we recover previous results obtained via analytical methods by~\citet{Fearnhead} and~\citet{Taylor07} and give them a new meaning in terms of the graphical construction. Section~\ref{section_fluxes} analyses in more detail the probability fluxes of mutation and coalescence events involving the ancestral line. Section~\ref{sec_parameters} is devoted to exploring the dependence on the parameters; we discuss the biological implications in Section~\ref{sec:biology}. 

\section{The Model}\label{sec_model}
Consider a population consisting of a fixed number $N\in\N$ of individuals and assume that each of them carries a type, which can be either $0$ or $1$. At any instant in continuous time, every individual may reproduce or mutate. Individuals of type $0$ reproduce at rate $1+s^N$ with $s^N\geq 0$ (where the superscript indicates the dependence on population size), while individuals of type $1$ reproduce at rate $1$. This means that individuals of type $0$ have a selective advantage of $s^N$; for this reason type $0$ is referred to as the fit or beneficial type, whereas type $1$ is the unfit or deleterious type. When an individual reproduces, it produces a single offspring, which inherits the parent's type and replaces an individual chosen uniformly at random from the population (including its parent); in this way, population size is kept constant. Every individual mutates at rate $u^N>0$. When it does so, the new type is $j \in \{0,1\}$ with probability $\nu_j$, where $0<\nu_j<1$ and $\nu_0+\nu_1=1$. This allows for the possibility of silent mutations, where the type is the same before and after the event.

The Moran model can be represented graphically as follows (see Figure~\ref{ips}). Every individual is represented by a horizontal line, lines are labelled $1, \ldots, N$ from top to bottom, time runs from left to right, and the events are indicated by graphical elements: reproduction events are depicted as arrows connecting two lines with the parent at the tail, the offspring at the tip, and the latter replacing the individual at the target line (arrows pointing to their own tails have no effect and are omitted); beneficial and deleterious mutation events are depicted by circles and crosses, respectively, on the lines. We will first describe the \emph{untyped} version, where all graphical elements appear at constant rates, regardless of the types, see Figure~\ref{ips}. We distinguish two kinds of arrows: \emph{neutral} ones have filled arrowheads and appear at rate $1$ per individual (or, equivalently, at rate $1/N$ per ordered pair of lines), while arrows with hollow heads refer to selective reproduction events and appear at rate $s^N/N$ per ordered pair of lines. Finally, circles (crosses) appear at rate $u^N\nu_0$ (at rate $u^N\nu_1$) on every line. 
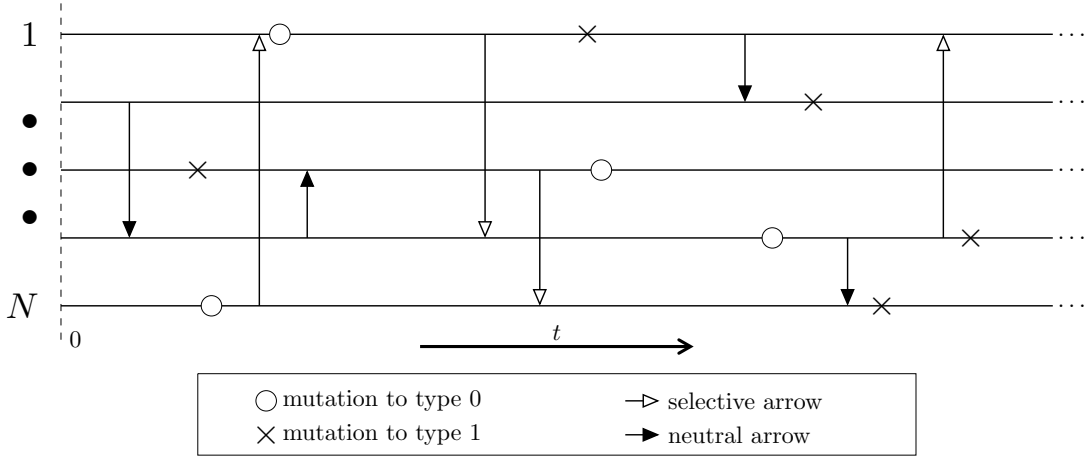
\begin{figure}
	\scalebox{.9}{\begin{tikzpicture}
		
		\draw[dashed] (0,-0.5) --(0,4.5);
		\node [right] at (0,-0.5) {\(0\)};
		\draw[-{angle 60[scale=5]},line width=1.3] (5.25,-0.6) -- (9.25,-0.6) node[text=black, pos=.5, yshift=6pt]{\(t\)};
		
		\draw[opacity=1,semithick] (0,0) -- (14.5,0);
		\draw[opacity=1,semithick] (0,1) -- (14.5,1);
		\draw[opacity=1,semithick] (0,2) -- (14.5,2);
		\draw[opacity=1,semithick] (0,3) -- (14.5,3);
		\draw[opacity=1,semithick] (0,4) -- (14.5,4);
		\draw[-{triangle 45[scale=5]},semithick,opacity=1] (10,4) -- (10,3);
		\draw[-{triangle 45[scale=5]},semithick,opacity=1] (11.5,1) -- (11.5,0);
		\draw[-{triangle 45[scale=5]},semithick,opacity=1] (3.6,1) -- (3.6,2);
		\draw[-{triangle 45[scale=5]},semithick,opacity=1] (1,3) -- (1,1);
		\draw[-{open triangle 45[scale=5]},semithick,opacity=1] (12.9,1) -- (12.9,4);
		\draw[-{open triangle 45[scale=5]},semithick,opacity=1] (2.9,0) -- (2.9,4);
		\draw[-{open triangle 45[scale=5]},semithick,opacity=1] (6.2,4) -- (6.2,1);
		\draw[-{open triangle 45[scale=5]},semithick,opacity=1] (7,2) -- (7,0);
		
		\node[opacity=1] at (11,3) {\scalebox{1.5}{\(\times\)}};
		\node[opacity=1] at (13.3,1) {\scalebox{1.5}{\(\times\)}};
		\node[opacity=1] at (12,0) {\scalebox{1.5}{\(\times\)}};
		\node[opacity=1] at (7.7,4) {\scalebox{1.5}{\(\times\)}};
		\node[opacity=1] at (2,2) {\scalebox{1.5}{\(\times\)}};;  
		
		\draw (2.2,0)[opacity=1] circle (1.5mm)  [fill=white!100];    
		\draw (3.2,4)[opacity=1] circle (1.5mm)  [fill=white!100];
		\draw (10.4,1)[opacity=1] circle (1.5mm)  [fill=white!100];
		\draw (7.9,2)[opacity=1] circle (1.5mm)  [fill=white!100];
		
		\node[opacity=1,left] at (-.2,0) {\scalebox{1.5}{\(N\)}};   
		\node[opacity=1,left] at (-.2,4) {\scalebox{1.5}{\(1\)}};  
		\node[opacity=1,left] at (-.2,1.3) {\scalebox{1.5}{\(\bullet\)}}; 
		\node[opacity=1,left] at (-.2,2) {\scalebox{1.5}{\(\bullet\)}};   
		\node[opacity=1,left] at (-.2,2.7) {\scalebox{1.5}{\(\bullet\)}};
		
		\draw (3,-1.4)  circle (1.5mm)  [fill=white!100] node at (3,-1.4) [right] {\ mutation to type~\(0\)};
		\node at (3,-1.9) {\scalebox{1.5}{\(\times\)}} node at (3,-1.9)[right] {\ mutation to type~\(1\)};    
		\draw[-{open triangle 45[scale=2.5]},semithick] (8.25,-1.4) -- (8.75,-1.4) node [right] {selective arrow};
		\draw[-{triangle 45[scale=2.5]},semithick] (8.25,-1.9) -- (8.75,-1.9) node [right] {neutral  arrow};
		\draw (2,-1) -- (2,-2.2) -- (12.5,-2.2) -- (12.5,-1) -- (2,-1);
		
		\node [right] at (14.35,0) {\(\hspace{0.1cm}\dots\)};
		\node [right] at (14.35,1) {\(\hspace{0.1cm}\dots\)};
		\node [right] at (14.35,2) {\(\hspace{0.1cm}\dots\)};
		\node [right] at (14.35,3) {\(\hspace{0.1cm}\dots\)};
		\node [right] at (14.35,4) {\(\hspace{0.1cm}\dots\)};
\end{tikzpicture}}
	\caption{A realisation of the untyped graphical representation of the Moran model. \label{ips}}
\end{figure}
The untyped version may be turned into a typed one (see Figure \ref{ips_typed}) by assigning an initial type to every line at time $0$ and then propagating the types forward according to the following rules. All individuals use the neutral arrows to place offspring, whereas selective arrows are used only by fit individuals and are ignored by unfit ones. A circle (a cross) turns the type into 0 (into 1). 
\begin{figure}
	\scalebox{.9}{\begin{tikzpicture}

		
		\draw[dashed] (0,-0.5) --(0,4.5);
		\node [right] at (0,-0.5) {\(0\)};
		\draw[-{angle 60[scale=5]},line width=1.3] (5.25,-0.6) -- (9.25,-0.6) node[text=black, pos=.5, yshift=6pt]{\(t\)};

		\draw[-{triangle 45[scale=5]},semithick,opacity=1] (10,4) -- (10,3);
		\draw[-{triangle 45[scale=5]},semithick,opacity=1] (11.5,1) -- (11.5,0);
		\draw[-{triangle 45[scale=5]},semithick,opacity=1] (3.6,1) -- (3.6,2);
		\draw[-{triangle 45[scale=5]},semithick,opacity=1] (1,3) -- (1,1);
		\draw[-{open triangle 45[scale=5]},semithick,opacity=1] (12.9,1) -- (12.9,4);
		\draw[-{open triangle 45[scale=5]},semithick,opacity=1] (2.9,0) -- (2.9,4);
		\draw[-{open triangle 45[scale=5]},semithick,opacity=1] (6.2,4) -- (6.2,1);
		\draw[-{open triangle 45[scale=5]},semithick,opacity=1] (7,2) -- (7,0);
		
		\node[opacity=1,left] at (-.2,0) {\scalebox{1.5}{\(N\)}};   
		\node[opacity=1,left] at (-.2,4) {\scalebox{1.5}{\(1\)}};  
		\node[opacity=1,left] at (-.2,1.3) {\scalebox{1.5}{\(\bullet\)}}; 
		\node[opacity=1,left] at (-.2,2) {\scalebox{1.5}{\(\bullet\)}};   
		\node[opacity=1,left] at (-.2,2.7) {\scalebox{1.5}{\(\bullet\)}};

		\draw[lightbrown,line width=0.9mm] (0.7,-1.4) -- (1,-1.4) node at (1,-1.4)[right,black] {type~{\color{lightbrown}\(1\)} ({\color{lightbrown} unfit})};
		\draw[darkgreen,line width=0.9mm] (0.7,-1.9) -- (1,-1.9) node at (1,-1.9) [right, black] {type~{\color{darkgreen}\(0\)} ({\color{darkgreen}fit})};
		\draw (5.2,-1.4)  circle (1.5mm)  [fill=white!100] node at (5.2,-1.4) [right] {\ mutation to type~\(0\)};
		\node at (5.2,-1.9) {\scalebox{1.5}{\(\times\)}} node at (5.2,-1.9)[right] {\ mutation to type~\(1\)};    
		\draw[-{open triangle 45[scale=2.5]},semithick] (9.7,-1.4) -- (10.2,-1.4) node [right] {selective arrow};
		\draw[-{triangle 45[scale=2.5]},semithick] (9.7,-1.9) -- (10.2,-1.9) node [right] {neutral  arrow};
		\draw (0.5,-1) -- (0.5,-2.2) -- (14,-2.2) -- (14,-1) -- (0.5,-1);

		\draw[darkgreen,opacity=1,line width=0.9mm] (0,0)--(12,0);
		\draw[opacity = 0.7,lightbrown,line width=0.9mm] (12,0)--(14.5,0);
		
		\draw[darkgreen,opacity=1,line width=0.9mm] (0,1)--(1,1);
		\draw[opacity = 0.7,lightbrown,line width=0.9mm] (1,1)--(6.2,1);
		\draw[darkgreen,opacity=1,line width=0.9mm] (6.2,1)--(13.3,1);
		\draw[opacity = 0.7,lightbrown,line width=0.9mm] (13.3,1)--(14.5,1);
		
		\draw[darkgreen,opacity=1,line width=0.9mm] (0,2)--(2,2);
		\draw[opacity = 0.7,lightbrown,line width=0.9mm] (2,2)--(9.4,2);
		\draw[darkgreen,opacity=1,line width=0.9mm] (7.9,2)--(14.5,2);
		
		\draw[opacity = 0.7,lightbrown,line width=0.9mm] (0,3)--(14.5,3);

		\draw[opacity = 0.7,lightbrown,line width=0.9mm] (0,4)--(2.9,4);
		\draw[darkgreen,opacity=1,line width=0.9mm] (2.9,4)--(7.7,4);
		\draw[opacity = 0.7,lightbrown,line width=0.9mm] (7.7,4)--(12.9,4);
		\draw[darkgreen,opacity=1,line width=0.9mm] (12.9,4)--(14.5,4);
		
		\fill [darkgreen] (-.15,-.15) rectangle (0.15,0.15);
		\fill [darkgreen] (-.15,0.85) rectangle (0.15,1.15);
		\fill [darkgreen] (-.15,1.85) rectangle (0.15,2.15);
		\fill [opacity = 0.7,lightbrown] (-.15,2.85) rectangle (0.15,3.15);
		\fill [opacity = 0.7,lightbrown] (-.15,3.85) rectangle (0.15,4.15);
		
		
		\node [right] at (14.35,0) {\(\hspace{0.1cm}\dots\)};
		\node [right] at (14.35,1) {\(\hspace{0.1cm}\dots\)};
		\node [right] at (14.35,2) {\(\hspace{0.1cm}\dots\)};
		\node [right] at (14.35,3) {\(\hspace{0.1cm}\dots\)};
		\node [right] at (14.35,4) {\(\hspace{0.1cm}\dots\)};
		
		\node[opacity=1] at (11,3) {\scalebox{1.5}{\(\times\)}};
		\node[opacity=1] at (13.3,1) {\scalebox{1.5}{\(\times\)}};
		\node[opacity=1] at (12,0) {\scalebox{1.5}{\(\times\)}};
		\node[opacity=1] at (7.7,4) {\scalebox{1.5}{\(\times\)}};
		\node[opacity=1] at (2,2) {\scalebox{1.5}{\(\times\)}};
		
		\draw (2.2,0)[opacity=1,fill=white!100] circle (1.5mm);    
		\draw (3.2,4)[opacity=1,fill=white!100] circle (1.5mm);
		\draw (10.4,1)[opacity=1,fill=white!100] circle (1.5mm);
		\draw (7.9,2)[opacity=1,fill=white!100] circle (1.5mm);
		
\end{tikzpicture}
}
	\caption{A typed version of the realisation in Figure~\ref{ips}.\label{ips_typed}}
\end{figure}
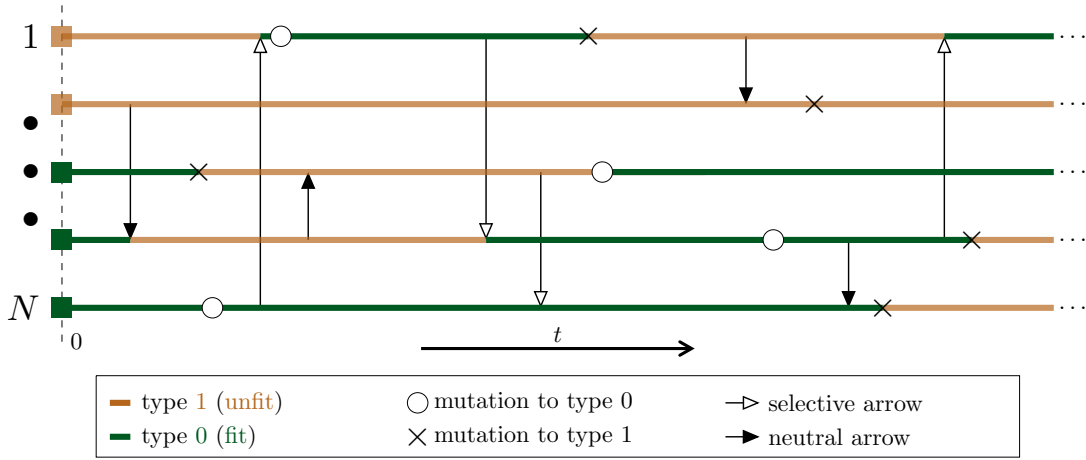
Now let $Y_t^N$ be the number of type-$1$ individuals at time $t$. The process $Y^{N}=(Y_t^{N})_{t\geq 0}$ is a continuous-time birth-death process on $[N]_0$ with transition rates
\begin{equation*}\label{rates_Y}
	\begin{split}
		q^{}_{Y^{N}}(k,k+1) &= k\frac{N-k}{N}+u^N\nu_1 (N-k),\\
		q^{}_{Y^{N}}(k,k-1) &= (1+s^N) (N-k) \frac{k}{N} + u^N\nu_0 k,
	\end{split}
\end{equation*}
where $k\in[N]_0$ (note that $q^{}_{Y^{N}}(N,N+1) = 0 = q^{}_{Y^{N}}(0,-1)$ by construction); throughout, for $i, j \in \N$, we denote by $[i]$ the set $\{1, \ldots, i\}$, by $[i]_0$ the set $\{0, \ldots, i\}$, and by $[i:j]$ the set $\{i, \ldots, j\}$. The (reversible) stationary distribution of $Y^N$, namely $\pi^N = ( \pi^{N}_k)^{}_{k \in [N]_0}$, is given by
\begin{equation}\label{stationarydistributionfinite}
	\pi^{N}_k \defeq C^N \prod_{j=1}^{k-1} \frac{q^{}_{Y^{N}}(j,j+1)}{q^{}_{Y^{N}}(j+1,j)},
\end{equation}
where the empty product is 1 and $C^N$ is a normalising constant chosen so that $\sum_{k=1}^N \pi^{N}_k=1$. We denote by $Y_\infty^{N}$ a random variable with distribution $\pi^{N}$.

One is usually interested in the limiting behaviour as $N\rightarrow\infty$. The most commonly used limits are the deterministic limit (or dynamical law of large numbers) and the diffusion limit, which we will briefly describe below. However, our emphasis will be on finite ~$N$ for the sake of more generality, so as to also allow for scalings intermediate between deterministic and diffusion. For example, the scaling with $N^\alpha$ for $0 < \alpha < 1$ has recently received considerable attention, see, for example, \cite{Boenkost21a,Boenkost21b} or \cite{Esercito}.

\subsection{Diffusion limit}
In the diffusion limit, the selective reproduction rate and the mutation rate depend on $N$ and satisfy
\begin{equation*}
	\lim_{N\to\infty} Ns^N = \sigma, \quad \lim_{N\to\infty} Nu^N =\theta
\end{equation*}
with $0\leq\sigma,\theta<\infty$. Furthermore, it is assumed that $\lim_{N\to\infty} Y_0^N/N = \eta_0$. As $N \to \infty$, the process $(Y_{tN}^N/N)_{t\geq 0}$ is then well known to converge in distribution to $\cY\defeq(\cY_t)_{t\geq 0}$,
the Wright--Fisher diffusion on $[0,1]$ that solves the stochastic differential equation
\begin{equation*}
	\dd \cY_t=\big (-\sigma \cY_t(1-\cY_t)- \theta\nu_0 \cY + \theta\nu_1 (1-\cY_t) \big )\dd t + \sqrt{2 \cY_t (1-\cY_t)}\dd W_t
\end{equation*}
with initial value $\eta_0$,
where $(W_t)_{t \geqslant 0}$ is standard Brownian motion. For $\theta>0$, the process has a stationary measure $\pi$, known as Wright's distribution, with density
\begin{equation}\label{Wright_distrib}
	\pi(\eta) \defeq C\eta^{\theta\nu^{}_1-1}(1-\eta)^{\theta\nu^{}_0-1}\ee^{-\sigma \eta},\hspace{1cm} 0<\eta<1,
\end{equation}
where $C\defeq \int_0^1 \eta^{\theta\nu^{}_1-1}(1-\eta)^{\theta\nu^{}_0-1}\ee^{-\sigma \eta} \dd \eta$ is a normalising constant. Proofs of the above statements can be found in \citet[Chapter 7]{Durrett}. Furthermore, we also have weak convergence of the stationary distribution,
\begin{equation}\label{conv_pi}
	\pi^N \mathrel{\mathop{\longrightarrow}_{N\to\infty}} \pi;
\end{equation} 
indeed, the proof of the corresponding result for the Wright--Fisher model by \citet[Chapter 10, Theorem 2.2]{kurtz} works in exactly the same way for the Moran model. We denote by $\cY_\infty$ a random variable that has the distribution $\pi$.

\subsection{Deterministic limit}
In this case one lets $N \to \infty$ without any rescaling of time or of parameters; so $u^N\equiv u>0$ and $s^N\equiv s$. If $Y_0^N/N \rightarrow y_0 \in [0,1]$ when $N\rightarrow\infty$, then $(Y_t^N/N)_{t\geq 0}$ converges to $(y(t))_{t\geq 0}$, where $(y(t))_{t\geq 0}$ solves the Riccati differential equation
\begin{equation*}
	\dot{y}=-sy(1-y)-u\nu_0 y + u\nu_1(1-y)
\end{equation*}
with initial value $y(0)=y_0$; the convergence is uniform on compact time intervals in probability \cite[Prop. 3.1]{Cordero2}. The solution, which is known explicitly (see for example~\citet{Cordero2}), converges to the unique equilibrium
\begin{equation*}\label{widetilde_y}
	y_\infty \defeq \lim_{t\to\infty} y(t) = 
	\begin{cases}
		\frac{1}{2}\Big(1+\frac{u}{s}-\sqrt{(1-\frac{u}{s})^2+4\frac{u}{s}\nu_0}\Big), & s>0, \\
		\nu_1, & s=0,
	\end{cases}
\end{equation*}
independently of $y^{}_0$. The law of large numbers carries over to the equilibrium in the sense that 
\begin{equation}\label{DistrConvDet}
	\pi^N \mathrel{\mathop{\longrightarrow}_{N\to\infty}} \delta_{y_\infty}
\end{equation}
in distribution~\cite[Corollary 3.6]{Cordero2}, where $ \delta_{y_\infty}$ denotes the point measure on $y_\infty$.

\section{The pruned lookdown ancestral selection graph}\label{section_pldASG}
In this section, we first recall the pruned lookdown ancestral selection graph in the finite case and then take the deterministic and the diffusion limits.

\subsection{The finite case}
Based on the the concept of ancestral selection graph (ASG) as introduced by \citet{ASG} to study ancestries and genealogies of individuals (see also \citet[Sec.~5.4]{Etheridge11}), the pruned lookdown ancestral selection graph (pLD-ASG) is the tool that allows us to study the ancestral line of an individual chosen randomly from a population at any given time $t\geq 0$, to which we refer as the present. It was introduced by \cite{len} for the diffusion limit of the Moran model, extended to the finite population setting and the deterministic limit by \citet[Section 4]{Cordero} and \citet{Baake1}, reviewed by \cite{Baake}, and extended to certain forms of frequency-dependent selection by \cite{Esercito}. The idea behind it is to start with the untyped setting again, follow the history of a single individual (one of the lines at the right of Figure \ref{ips}) backward in time, and keep track of all \emph{potential} ancestors this individual may have. These potential ancestors are \emph{ordered} according to a hierarchy that keeps track of which line will be the \emph{true} ancestor once the types have been assigned to the lines at any given time point in the past. Therefore, at any time $r$ before the present, the pLD-ASG consists of a random number $L_r^N \in [N]$ of lines, the potential ancestors, with labels $1, \ldots, L_r^N$ that reflect the ordering; the upper index indicates the dependence on population size. The ordering contains elements of the lookdown construction of \citet{don} (see also \citet[Sec. 5.3]{Etheridge00}), hence the name; and it is usually visualised by identifying the labels with equidistant levels (starting at level~1), on which the lines are placed. Here we will use a somewhat simpler visualisation in the spirit of the \emph{ordered ASG} of \cite{len}, which also arranges lines from bottom to top according to their labels, but does not insist on equidistant spacing or a start at level~1. But this is only a minor difference in wording and visualisation of an underlying identical concept. 

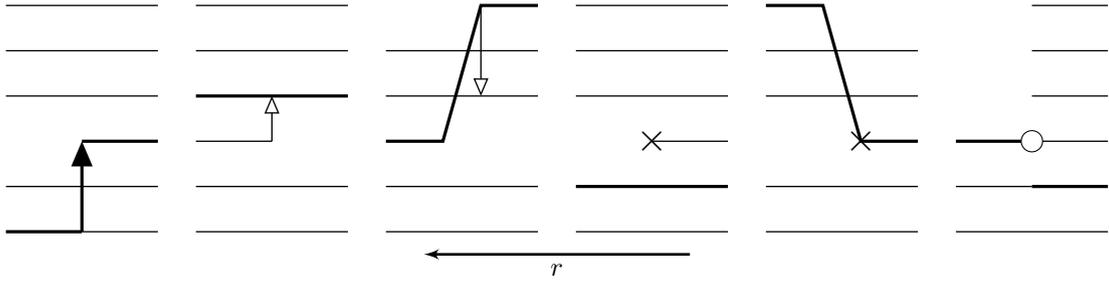
\begin{figure}
	\begin{tikzpicture}[scale=1]
	\draw[latex'-,line width=1] (5.5,-0.3) -- node[anchor=north]{\(r\)} (9,-0.3);
	
	\draw[line width=1.2] (0,0)--(1,0);
	\draw[line width=0.5] (1,0)--(2,0);
	\draw[line width=0.5] (0,0.6)--(2,0.6);
	\draw[line width=1.2] (1,1.2)--(2,1.2);
	\draw[line width=0.5] (0,1.8)--(2,1.8);
	\draw[line width=0.5] (0,2.4)--(2,2.4);
	\draw[line width=0.5] (0,3)--(2,3);
	\draw[-{triangle 45[scale=5]},line width=1.2] (1,0) -- (1,1.2);

	\draw[line width=0.5] (2.5,0)--(4.5,0);
	\draw[line width=0.5] (2.5,0.6)--(4.5,0.6);
	\draw[line width=1.2] (2.5,1.8)--(4.5,1.8);
	\draw[line width=0.5] (2.5,2.4)--(4.5,2.4);
	\draw[line width=0.5] (2.5,3)--(4.5,3);
	\draw[line width=0.5] (2.5,1.2)--(3.5,1.2);
	\draw[-{open triangle 45[scale=5]},line width=0.5] (3.5,1.2) -- (3.5,1.8);

	\draw[line width=0.5] (5,0)--(7,0);
	\draw[line width=0.5] (5,0.6)--(7,0.6);
	\draw[line width=1.2] (5,1.2)--(5.75,1.2)--(6.25,3)--(7,3);
	\draw[line width=0.5] (5,1.8)--(7,1.8);
	\draw[line width=0.5] (5,2.4)--(7,2.4);
	\draw[-{open triangle 45[scale=5]},line width=0.5] (6.25,3) -- (6.25,1.8);

	\draw[line width=0.5] (7.5,0)--(9.5,0);
	\draw[line width=1.2] (7.5,0.6)--(9.5,0.6);
	\draw[line width=0.5] (7.5,1.8)--(9.5,1.8);
	\draw[line width=0.5] (7.5,2.4)--(9.5,2.4);
	\draw[line width=0.5] (7.5,3)--(9.5,3);
	\node[opacity=1] at (8.5,1.2) {\scalebox{1.5}{\(\times\)}};
	\draw[line width=0.5] (8.5,1.2)--(9.5,1.2);
	
	\draw[line width=0.5] (10,0)--(12,0);
	\draw[line width=0.5] (10,0.6)--(12,0.6);
	\draw[line width=0.5] (10,1.8)--(12,1.8);
	\draw[line width=0.5] (10,2.4)--(12,2.4);
	\draw[line width=1.2] (10,3)--(10.75,3)--(11.25,1.2)--(12,1.2);
	\node[opacity=1] at (11.25,1.2) {\scalebox{1.5}{\(\times\)}};
	
	\draw[line width=0.5] (12.5,0)--(14.5,0);
	\draw[line width=0.5] (12.5,0.6)--(13.5,0.6);
	\draw[line width=1.2] (13.5,0.6)--(14.5,0.6);
	\draw[line width=1.2] (12.5,1.2)--(13.5,1.2);
	\draw[line width=0.5] (13.5,1.2)--(14.5,1.2);
	\draw[line width=0.5] (13.5,1.8)--(14.5,1.8);
	\draw[line width=0.5] (13.5,2.4)--(14.5,2.4);
	\draw[line width=0.5] (13.5,3)--(14.5,3);
    \draw (13.5,1.2)[opacity=1,fill=white!100] circle (4pt);

\end{tikzpicture}
	\caption{\label{oASG_finite}Transitions in the finite-$N$ pLD-ASG. The vertical positions reflect the labels, with label 1 at the bottom and label $L_r^N$ at the top. The immune line is bold. From left to right: coalescence, branching, collision, deleterious mutation of a line that is not immune, deleterious mutation of the immune line, beneficial mutation.}
\end{figure}

We throughout denote forward and backward time by $t$ and $r$, respectively; backward time $r$ corresponds to forward time $t-r$. The concept of the pLD-ASG is captured in the following definition, which is adapted from \citet[Sec.~4.6]{Cordero} and \citet[Def.~4]{Baake} and illustrated in Figures~\ref{oASG_finite} and \ref{fig_pldASG_finite}. 

\begin{figure}[H]
	\scalebox{.7}{\begin{tikzpicture}
	\draw[line width = 1.2] (5.2,4.2)--(12,4.2);
	\draw[line width = 1.2] (0.5,2.4)--(1.3,2.4)--(1.5,0)--(3.8,0)--(4,3)--(5.2,3);
	\draw[line width = 1.2] (-1,1.2)--(0.5,1.2);
	\draw(3.3,3.6)--(7.5,3.6);
	\draw[-{open triangle 45},line width=1](7.5,3.6)--(7.5,4.2);
	\draw(5,3)--(10,3);
	\draw[-{open triangle 45},line width=1](10,3)--(10,4.2);
	\draw(2,2.4)--(8,2.4);
	\draw[-{open triangle 45},line width=1](8,2.4)--(8,3);
	\draw(0.5,1.8)--(6.5,1.8);
	\draw[-{open triangle 45},line width=1](6.5,1.8)--(6.5,2.4);
	\draw(0.5,1.2)--(4.8,1.2);
	\draw[-{open triangle 45},line width=1](4.8,1.2)--(4.8,1.8);
	\draw(2.5,0.6)--(9,0.6);
	\draw[-{open triangle 45},line width=1](9,0.6)--(9,3);
	
	\node at (11,4.2) {\scalebox{1.5}{\(\times\)}};
	\node at (2.5,0.6) {\scalebox{1.5}{\(\times\)}};
	\node at (1.5,0) {\scalebox{1.5}{\(\times\)}};
	\node at (6,4.2) {\scalebox{1.5}{\(\times\)}};

	\node[draw,circle,inner sep=3pt,fill=white!100] at (0.5,1.2) {};
	\node[draw,circle,inner sep=3pt,fill=white!100] at (3.3,2.4) {};
	\node[draw,circle,inner sep=3pt,fill=white!100] at (8.5,4.2) {};
	
	\draw[-{triangle 45},line width=1] (5.2,3) -- (5.2,4.2);
	\draw[-{triangle 45},line width=0.5] (2,1.2) -- (2,2.4);
	\draw[-{open triangle 45},line width=1] (4,3) -- (4,0.6);
	
	\begin{pgfonlayer}{background}
		\highlight{4mm}{cyan}{(-1,1.2) -- (4.8,1.2) -- (4.8,1.8) -- (6.5,1.8) -- (6.5,2.4) -- (8,2.4) -- (8,3) -- (10,3) -- (10,4.2) -- (12,4.2)}
	\end{pgfonlayer}
\end{tikzpicture}}
	\caption{A realisation of the finite-$N$ pLD-ASG. The immune line is bold, and the ancestral line is marked light blue. \label{fig_pldASG_finite}}
\end{figure}
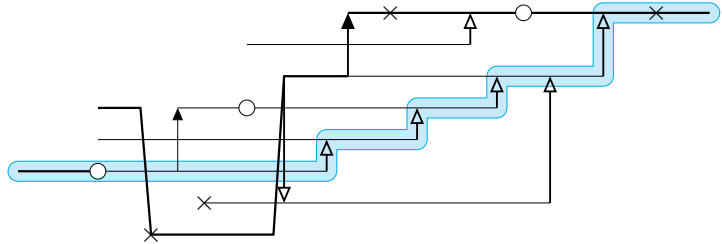

\begin{definition}\label{pld-ASG finite}
	The \emph{pruned lookdown ASG} in a population consisting of $N$ individuals starts with a single line chosen uniformly from the population at backward time $r=0$ and proceeds in direction of increasing $r$. At each time $r\geqslant 0$, the graph consists of a finite number $L^N_r$ of lines, one of which is distinguished and called immune. The lines are labelled $1,\ldots,L^N_r$ from bottom to top; the label of the immune line is $M^N_r \in [L^N_r]$ with $M_0^N=1$. The process $(L^N, M^N) \defeq (L^N_r, M^N_r)^{}_{r \geq 0}$ lives on the state space $[N]^2$ and evolves via the following transitions when $(L^N_r,M^N_r)=(\ell, m)$ (see again Figure~\ref{oASG_finite}).
	\begin{enumerate}
		\item \emph{Coalescence.} Every line $j \in [2: \ell]$ coalesces into every line $i \in [1:j-1]$ at rate $\frac{2}{N}$, that is, label $j$ is removed and $\ell$ decreases by one. The remaining lines are relabelled to fill the gap while retaining their original order. The event affects the immune line in the same way as all others.
		\item \emph{Branching.} Every line $i \in [1:\ell]$ branches at rate $s^N\frac{N-\ell}{N}$, and a new line, namely the incoming branch (the one at the tail of the arrow) is inserted at label $i$ and all previous labels $k\in [i:\ell]$ increase to $k+1$; in particular, the label of the continuing branch (the one at the tip of the arrow) increases from $i$ to $i+1$, and $\ell$ increases to $\ell+1$. If $m\in[i:\ell]$, then $m$ increases to $m+1$; otherwise, it remains unchanged.
		\item \emph{Collision.} Every line labelled $i\in [1:\ell]$ collides with the line labelled $j$ (that is, it performs an \emph{unsuccessful branching event} induced by a selective arrow emerging from line $j$ already in the graph) and is hence relabelled $j$, at rate $s^N$ for every $j \in [1:i-1]$. Upon this, all labels previously in $[j:i-1]$ increase by one. In this case, $\ell$ does not change, and the relabelling applies to the immune line in the same way as to all others.
		\item \emph{Deleterious mutation.} Every line $i\in [1:\ell]$ experiences deleterious mutations at rate $u^N\nu_1$. If $i\neq m$, the line labelled $i$ is pruned, and the remaining lines (including the immune line) are relabelled to fill the gap (again in an order-preserving way), thus rendering the transition of $\ell$ to $\ell-1$. If, however, $i=m$, the line affected by the mutation is not pruned, but $m$ is set to $\ell$, which remains unchanged. All labels previously in $[i+1:\ell]$ decrease by one, so that the gaps are filled.
		\item \emph{Beneficial mutation.} Every line $i\in [1:\ell]$ experiences beneficial mutations at rate $u^N\nu_0$. In this case all lines labelled $>i$ are pruned, resulting in a transition from $\ell$ to $i$. Also $m$ is set to $i$.
	\end{enumerate}
\end{definition}
The line-counting process $L^N \defeq (L^N_r)_{r\geq 0}$, that is, the marginal of $(L^N_r,M^N_r)_{r\geq 0}$, is itself Markov, with state space $[N]$. As a consequence of Definition~\ref{pld-ASG finite}, $L^N$ has transition rates
\begin{equation}\label{pldASG_finite_rates}
	\begin{split}
		q^{}_{L^N}(n,n+1) &= s^Nn\frac{N-n}{N},\\
		q^{}_{L^N}(n,n-1) &= n\frac{n-1}{N}+u^N\nu_1 (n-1)+u^N\nu_0\mathbbm{1}_{\{n>1\}},\\
		q^{}_{L^N}(n,j) &= u^N\nu_0,
	\end{split}
\end{equation}
for $n\in[N]$ and $j\in[n-2]$. 

The pLD-ASG encodes potential and true ancestry in the following way. One first samples the types of the $L^N_t$ lines at backward time $t$ without replacement according to $(N-Y^N_0,Y^N_0)$, which means a population with $N-Y^N_0$ type-0 and $Y^N_0$ type-1 individuals; one then propagates the types forward along the graph according to the coalescence, branching, and mutation events as in the original Moran model. At any instant of time, the true ancestor is the lowest type-$0$ line in the graph if there is such a line; when all individuals are unfit, the ancestor is the immune line \citep[Prop.~4.6]{Cordero}.

Let now $A^N_r\in \{0,1\}$ be the type of the true ancestor at backward time $r \leqslant t$ of the single individual chosen uniformly at $r=0$. It is then clear that
\begin{equation}\label{nonstat}
	\mathbb{P} (A^N_r=1 \mid Y_0^N = k) = \sum_{n=1}^N \P(L_r^N=n) \frac{k^{\underline n}}{N^{\underline n}},
\end{equation}
where, for $k \in \Z$ and $n \in \N$, 
\begin{equation*}
	k^{\underline n} \defeq k(k-1)\ldots(k-n+1)
\end{equation*}
is the falling factorial, and so
\begin{equation}\label{prob_del_anc}
	\P (A^N_r=1) = \sum_{n=1}^N \P(L_r^N=n) \E \Big ( \frac{(Y_0^N)^{\underline n}}{N^{\underline n}} \Big ).
\end{equation}

We are interested in the ancestral type in the distant past, that is, in the limit $r \to \infty$ following $t \to \infty$. 
Thanks to being irreducible and recurrent, $L^N$ converges to a unique stationary distribution as $r \to \infty$; we denote by $L^N_\infty$ a random variable with this stationary distribution and denote the latter by
\begin{equation*}
	w^N \defeq (w^N_n)^{}_{n>0} \quad \text{with } \; w^N_n\defeq \P(L^N_\infty=n). 
\end{equation*}
\citet{Cordero} has shown that the tail probabilities 
\begin{equation*}
	a^N_n \defeq \P(L^N_\infty>n) = \sum_{\ell >n} w^N_\ell
\end{equation*}
are characterised via the recursion 
\begin{equation}\label{rec_a}
	\Big(\frac{n+1}{N}+s \frac{N-n}{N}+u\Big)a_n^N = \Big(\frac{n+1}{N}+u^N \nu_1\Big)a_{n+1}^N + s\frac{N-n}{N}a_{n-1}^N
\end{equation}
for $0<n<N$, together with the boundary conditions $a_0^N = 1$ and $a_N^N=0$.

We additionally assume that, thanks to a long evolutionary prehistory, $Y^N_0$ is stationary (that is, has the distribution $\pi^N$ of \eqref{stationarydistributionfinite});
as a consequence, $Y^N_t$ is stationary for all $t \geqslant 0$.
It is then clear from \eqref{prob_del_anc} that $A^N_r$ will also become stationary as $r \to \infty$. We write $A^N_\infty$ for a random variable with this stationary distribution. Due to \eqref{nonstat} and \eqref{prob_del_anc}, it satisfies
\begin{equation}\label{gamma}
	g^N_k \defeq \P(A^N_\infty=1 \mid Y^N_0 = k) = \sum_{n=1}^N w^N_{n} \frac{k^{\underline n}}{N^{\underline n}}
\end{equation}
and
\begin{equation}\label{anc_unfit}
	\P(A^N_\infty=1) = \sum_{n=1}^N w^{N}_n b^N_n,
\end{equation}
where 
\begin{equation}\label{eq:b^N_n}
	b^N_n \defeq \E\Big ( \frac{(Y^N_0)_{}^{\underline{n}}}{N^{\underline{n}}} \Big ).
\end{equation}
Note that $b^N_n$ is the (unconditional) probability to sample unfit individuals only when drawing $n$ individuals from the stationary population without replacement. It will be shown in Appendix~A (Section~\ref{sec:app_A}) that the $b^N_n$ satisfy the \emph{sampling recursion}
\begin{equation}\label{rec_b}
	\Big(\frac{n-1}{N}+s \frac{N-n}{N}+u\Big)b_n^N = \Big(\frac{n-1}{N}+u^N \nu_1\Big)b_{n-1}^N + s\frac{N-n}{N}b_{n+1}^N, \quad 0 < n \leq N,
\end{equation}
which is complemented by the boundary conditions $b_0^N=1$ and $b_{N+1}^N=0$. Likewise, we can rewrite \eqref{gamma} as
\begin{equation}\label{I_stat_0}
	\begin{split}
		1-g^N_k &= \P (A^N_\infty=0 \mid Y_0^N = k) = 1-\sum_{n=1}^N (a^N_{n-1} - a^N_n) \frac{k^{\underline{n}}}{N^{\underline{n}}}\\
		&= 1 - \frac{k}{N} + \sum_{n=2}^{N} a^N_{n-1} \Big (\frac{k^{\underline{n-1}}}{N^{\underline{n-1}}} - \frac{k^{\underline{n}}}{N^{\underline{n}}} \Big ) = \frac{N-k}{N} + \sum_{n=2}^{N} a^N_{n-1} \frac{k^{\underline{n-1}}}{N^{\underline{n-1}}} (N-k) \\
		&= \sum_{n=1}^N a^N_{n-1} \frac{k^{\underline {n-1}}}{N^{\underline {n}}} (N-k),
	\end{split}
\end{equation}
where we have used in the third step that $a_0^N=1$ and $ a_N^N=0$ and changed summation. The last expression in \eqref{I_stat_0} reflects the decomposition according to the lowest type-0 line. In words: the ancestral line is of type 0 if, for some $n$, there are at least $n$ lines, the first $n-1$ receive type 1, and line $n$ receives type 0. Eq.~\eqref{I_stat_0} also leads to the counterpart of \eqref{anc_unfit},
\begin{equation}\label{anc_fit}
	\begin{split}
		\P(A^N_\infty=0) &= \E \big (\P(A_\infty^N=0 \mid Y_0^N) \big ) \\
		&= \sum_{n=1}^N a^{N}_{n-1} \E\Big ( \frac{(Y^N_0)^{\underline{n-1}}}{N^{\underline{n}}} (N-Y^N_0) \Big ) = \sum_{n=1}^N a^{N}_{n-1} (b^N_{n-1} - b^N_{n}).
	\end{split}
\end{equation}
Also note for later use that, by \eqref{gamma} and \eqref{I_stat_0},
\begin{equation}\label{bad_ind_anc}
	\P(\text{any given type-1 individual is the ancestor} \mid Y^N_0 =k) = \frac{g^N_k}{k} = \sum_{n=1}^N w^N_{n} \frac{(k-1)^{\underline{n-1}}}{N^{\underline n}},
\end{equation}
as well as 
\begin{equation}\label{good_ind_anc}
	\P(\text{any given type-0 individual is the ancestor} \mid Y^N_0 =k) = \frac{1-g^N_k}{N-k} = \sum_{n=1}^N a^N_{n-1} \frac{k^{\underline{n-1}}}{N^{\underline {n}}}; 
\end{equation}
see also \cite{Sandra}, where these relations were derived via a descendant process forward in time.

Let us now compare $\P(A^N_\infty=1)$, the probability for an unfit \emph{ancestor}, with $b^N_1$, the probability for an unfit individual at \emph{present}. Both quantities are presented as functions of $s^N$ in Figure~\ref{fig:anc_dist}. 	To produce this graph (and the others contained in this paper), we have computed the $a^N_n$ and $b^N_n$ by solving numerically the linear systems given by the recursions \eqref{rec_a} and \eqref{rec_b}, together with the approximation $a^N_{n}=b^N_{n}=0$ for $n\geq 100$. This yields accurate results since, for small $s^N$ and large $n$, the $a^N_n$ and $b^N_n$ decrease to zero rapidly, as confirmed by the fact that the results do not change by moving the truncation point to, for example, 50 or 200.

\begin{figure}
	\includegraphics[width=0.6 \textwidth]{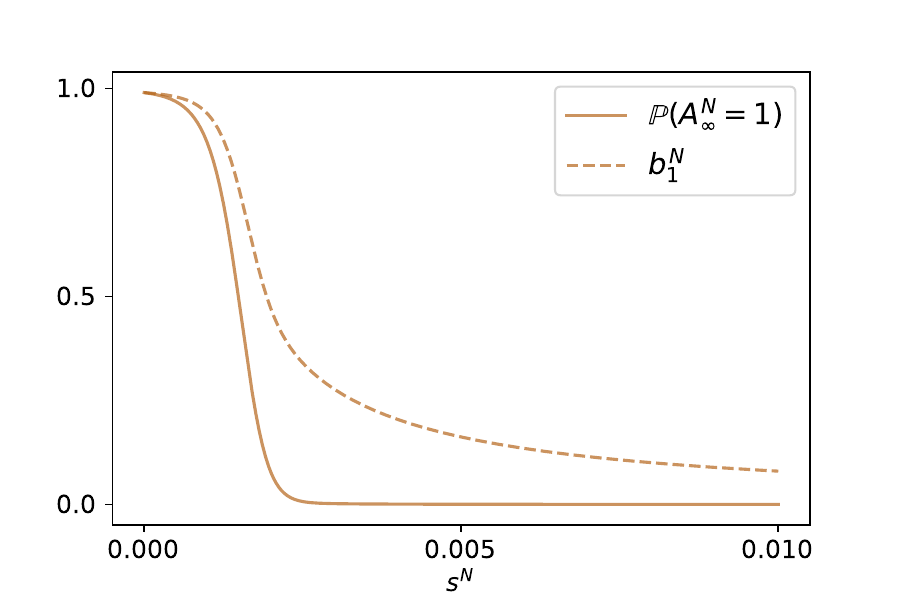}
	\caption{\label{fig:anc_dist} The expected proportion of type-1 individuals in the population (that is, $b_1^N=\E(Y_\infty/N)$) and the probability of a type-1 ancestor in the finite case as a function of $s^N$. Further parameters: $N=10^4, u^N=8 \cdot 10^{-4}, \nu^{}_1=0.99$.}
\end{figure}

It is clear that, for any $s^N>0$, we have $\P(A^N_\infty=1)<b^N_1$, simply because $b^N_n<b^N_1$ for $n>1$, and so $\sum_{n=1}^N w^{N}_n b^N_n < b^N_1$ (compare \eqref{eq:b^N_n} and \eqref{anc_unfit}). This reflects the fact that an unfit individual has a lower chance that its descendants survive in the long run than does a fit individual; this produces the bias towards type 0 in the past relative to the present.

\subsection{Diffusion limit}\label{sec:difflimit}
In the diffusion limit, the pLD-ASG of Definition \ref{pld-ASG finite} changes in the obvious way: (1) coalescence events occur as before (that is, every line coalesces into every line below it at rate 2), (2) branching events happen at rate $\sigma$ to every line, (3) collision events are absent, (4) deleterious mutations and (5) beneficial mutations happen at rate $\theta \nu_1$ and $\theta \nu_0$ per line, respectively, and so do the resulting pruning and relocation operations. In each of these events, the immune line behaves as in the finite case. See \citet{len} and \citet{Baake} for comprehensive descriptions. Here we restrict ourselves to the process $(\cL,\cM) \defeq (\cL_r,\cM_r)_{r \geq 0}$, where $\cL_r$ and $\cM_r$ are the number of lines and the label of the immune line of the limiting pLD-ASG at backward time $r$. $(\cL,\cM)$ lives on the state space $\N^2$, and the marginal $\cL\defeq (\cL_r)_{r \geq 0}$ is again Markov with rates that emerge as the limits of the rates \eqref{pldASG_finite_rates} with the proper rescaling of time and parameters:
\begin{equation*}\label{pldASG_diffusion_rates}
	\begin{split}
		q^{}_{\cL}(n,n+1) &= n\sigma,\\
		q^{}_{\cL}(n,n-1) &= n(n-1)+(n-1)\theta\nu_1 + \theta\nu_0 \mathbbm{1}_{\{n>1\}},\\
		q^{}_{\cL}(n,j) &= \theta\nu_0,
	\end{split}
\end{equation*}
with $j\in[n-2]$, $n\in\N$. The process $\cL$ is non-explosive (because it is dominated by a Yule process with parameter $\sigma$, which is non-explosive), and the sequence of processes $(L^N_{Nr})^{}_{r \geq 0}$ converges to it in distribution as $N \to \infty$; this is a special case of Proposition~2.18 of \citet{Esercito}. The ancestral type $\cA_r \in \{0,1\}$ at backward time $r \leqslant t$ is constructed in analogy with the finite case, except that one works with the pLD-ASG in the diffusion limit and samples the types according to $(1-\cY_\infty,\cY_\infty)$ in an i.i.d.\ manner.

The process $\cL$ again has a unique stationary distribution; we denote by $\cL_\infty$ a random variable with this stationary distribution and let $\omega \defeq (\omega_n)_{n>0}$ with $ \omega_n \defeq \P(\cL_\infty=n) = \lim_{N \to \infty} w_n^N$. It is well known \citep{len} that the tail probabilities $\alpha_n \defeq \P(\cL_\infty>n) = \lim_{N \to \infty} a_n^N$ are the unique solution to the recursion
\begin{equation}\label{recursion}
	(n+1+\sigma+\theta)\alpha_n =(n+1+\theta\nu_1)\alpha_{n+1}+\sigma \alpha_{n-1} \quad \text{for} \; n>0,
\end{equation}
together with $\alpha_0=1$ and $\lim_{n\to\infty} \alpha_n = 0$. Due to \eqref{conv_pi}, we also have that $b^N_n \to \beta_n$ as $N \to \infty$ for $n\geq 0$, where $\beta_n \defeq \E(\cY_\infty^n)$. The $\beta_n$ satisfy the sampling recursion
\begin{equation}\label{rec_beta}
	(n-1+\sigma +\vartheta )\beta^{}_n = \sigma \beta^{}_{n+1} + (n-1+\vartheta \nu^{}_1)\beta^{}_{n-1},\ \quad n\in\N, 
\end{equation} 
complemented by the boundary conditions $\beta_0=1$ and $\lim_{n\to\infty}\beta_n=0$, see \cite{ASG} or \cite{Baake}. 

For $r \to \infty$, the ancestral type becomes stationary again; we denote by $\cA_\infty$ a random variable with this stationary distribution. The stationary probability of a type-1 (a type-0) ancestor is given by the analogue of \eqref{anc_unfit} (of \eqref{anc_fit}), that is,
\begin{equation}\label{anc_unfit_diff}
	\P(\cA_\infty =1) = \sum_{n=1}^\infty \omega^{}_n \beta_n,
\end{equation}
and
\begin{equation*}
	\P(\cA_\infty=0) = \sum_{n=1}^\infty \alpha_{n-1}(\beta_{n-1} - \beta_{n}).
\end{equation*}

\subsection{Deterministic limit}\label{sec:detlimit}
In the deterministic limit, the pLD-ASG of Definition \ref{pld-ASG finite} changes as follows. (1) coalescence events are absent, (2) every line experiences branching events at rate $s$, (3) collisions are absent, (4) deleterious mutations and (5) beneficial mutations happen at rates $u \nu_1$ and $u \nu_0$, respectively, on every line, and so does the resulting pruning (unless the line is immune). In each of these events, the immune line behaves as in the finite case; see \citet{Cordero} or \citet{Baake}.
It is important to note that, in this limit, the immune line is always the top line in the graph, that is at level $L_r$ for all $r \geq 0$. This is because coalescence and collision events are both absent, and thus there is no opportunity to move the immune line away from the top. It therefore suffices to consider the limiting line-counting process $L\defeq (L_r)_{r \geq 0}$, which has state space $\N$ and is characterised by the rates (obtained in the limit from \eqref{pldASG_finite_rates}):
\begin{equation*}\label{pldASG_det_rates}
	\begin{split}
		q^{}_L(n,n+1) &= ns,\\
		q^{}_L(n,n-1) &= (n-1)u\nu_1 + u\nu_0 \mathbbm{1}_{\{n>1\}},\\
		q^{}_L(n,j) &= u\nu_0,
	\end{split}
\end{equation*}
with $j\in [n-2]$, $n\in\N$. The sequence of line-counting processes $(L^N)_{N >0}$ converges in distribution to $L$ as $N \to \infty$ \cite[Prop.~5.3]{Cordero}. The limiting ancestral type $A_r \in \{0,1\}$ at backward time $r \leqslant t$ is constructed in analogy with the finite case, except that one works with the pLD-ASG in the deterministic limit and samples the types according to $(1-y_\infty,y_\infty)$ in an i.i.d.\ manner.

Again, the process $L$ is non-explosive and has a unique stationary distribution \cite[Lemmas~5.1 and 5.2]{Cordero}. We denote by $L_\infty$ a random variable with this distribution, and write ${w \defeq (w_n)_{n >0}}$ with ${w_n \defeq \P(L_\infty=n)= \lim_{N \to \infty} w_n^N}$. The tail probabilities are now given by $a_n\defeq {\P(L_\infty>n)} = \lim_{N \to \infty} a_n^N$. Due to the absence of coalescences and collisions in the pLD-ASG, the distribution is available explicitly, see \citet[Prop.~11]{Baake1} and \citet[Prop.~7]{Baake}:
\begin{equation}\label{geo}
	w_n = p^{n-1} (1-p), \quad a_n = p^n
\end{equation}
(so $L_\infty$ has the geometric distribution Geo$(1-p)$ with parameter $p$), where
\begin{equation}\label{eq_p}
	p \defeq 
	\begin{cases}
		\frac{1}{2}\Big(\frac{u+s}{u\nu_1}-\sqrt{\Big(\frac{u+s}{u\nu_1}\Big)^2-4\frac{s}{u\nu_1}}\Big), & \nu_1>0,\\
		\frac{s}{u+s}, & \nu_1=0.
	\end{cases}
\end{equation}
This also includes the limiting case $p=0$ for $s=0$, where $L_\infty \equiv 1$; the latter is due to the fact that there are neither branching nor coalescence nor pruning events. Furthermore, due to \eqref{DistrConvDet}, we also have $b_n \defeq \lim_{N \to \infty}b_n^N = y^n_\infty$.

In analogy with \eqref{gamma}, we now define 
\begin{equation*}
	g(y) \defeq  \sum_{n=1}^\infty  w_n y^n = (1-p) y \sum_{n=1}^\infty p^{n-1} y^{n-1}. 
\end{equation*}
In analogy with \eqref{I_stat_0}, we then have
\begin{equation*}
	1- g(y) = \sum_{n=1}^\infty a_{n-1} y^{n-1} (1-y) = (1-y) \sum_{n=1}^\infty p^{n-1} y^{n-1}.
\end{equation*}

For $r \to \infty$, the ancestral type becomes stationary again; we denote by $A_\infty$ a random variable with this stationary distribution. The stationary probability of a type-1 (type-0) ancestor is given by the analogue of \eqref{anc_unfit} (of \eqref{anc_fit}), that is,
\begin{equation}\label{anc_unfit_det}
	\P(A_\infty =1) = g(y_\infty)  \quad \text{and} \quad \P(A_\infty=0) = 1-g(y_\infty),
\end{equation}
so
\begin{equation}\label{expoff}
	\frac{\P(A_\infty=1)}{y_\infty}  = \frac{g(y_\infty)}{y_\infty}
	= (1-p) \frac{1-g(y_\infty)}{1-y_\infty} = (1-p) \frac{\P(A_\infty=0)}{1-y_\infty}
\end{equation}
in analogy with \eqref{bad_ind_anc} and \eqref{good_ind_anc}.
The quantities $\P(A_\infty=1)/y_\infty$ and  $\P(A_\infty=0)/(1-y_\infty)$
may  be interpreted as the long-term amount of offspring  of a type-1 and a type-0 individual, respectively, in a stationary population; this will surface again in Remark \ref{georgii}.

\section{The type process on the ancestral line}\label{section_backward}
In this section, we analyse the mutation process of the ancestral line. We again start with the finite case and, from there, proceed to the diffusion and deterministc limits.

\subsection{The finite case}\label{subsec:section_backward_finite}
\begin{figure}[b]
	\input{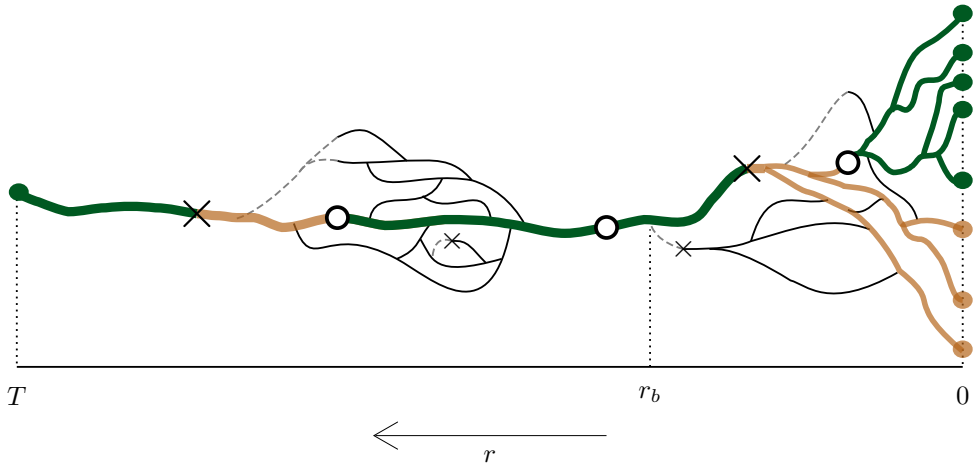}
	\caption{Sketch of a population of size $8$, its (unpruned) ASG decorated with mutations, the common ancestor line (bold), and the common ancestor type process. The first bottleneck is indicated by $r_b$. The coloured lines (dark green for type~0, light brown for type~1) are the true ancestors. The thin black lines are ASG lines that turn out as nonancestral. Among them, the dotted black lines are pruned away in the pLD-ASG. \label{ancline}}
\end{figure}
Consider the ASG decorated with the mutation events but without pruning and start it with all $N$ individuals of the population, rather than with a single individual as assumed so far, see Figure~\ref{ancline}. In analogy with the start with a single line, this captures all potential ancestors of the $N$ individuals. Since the corresponding line-counting process is positive recurrent, it will, with probability $1$, reach a state with just a single line (a \emph{bottleneck}) in finite time. At this point (at the latest), all individuals in the population share a common ancestor. The line of true ancestors leading back into the past from the most recent common ancestor onwards is defined as the \emph{common ancestor line} \citep{Fearnhead, Taylor07, len}. The \emph{common ancestor type process} is the mutation process on this very line, in the forward direction of time. It is often referred to as the \emph{substitution process} in phylogenetics, thus alluding to the fact that these mutations are inherited by all descendants and are thus visible in species comparisons, no matter which specific individuals are sampled.

The ancestral line together with its type can be constructed via a two-step (backward-forward) scheme in any finite time window $[0,T]$, in a way analogous to that described below \eqref{pldASG_finite_rates}. To this end, denote by $t,r\in[0,T]$ the forward and backward running time, respectively (i.e. $r=T-t$). Run the pLD-ASG from time $r=0$ until time $r=T$ started with the $N$ lines, assign a type to each of the ${L}_T^N$ lines by sampling without replacement from $(N-Y_0^N,Y_0^N)$, determine the ancestral line at time $r=T$ (as the lowest type-$0$ line if there is such a line, or the immune line otherwise), and propagate types and ancestry forward as in the original Moran model until time $r=0$. At any time $r$, the true ancestral line is, by construction, the lowest type-$0$ line if there is such a line, or the immune line otherwise. Note that, if there are no bottlenecks in the pLD-ASG in $[0,T]$, the so-constructed ancestral line may not be the ancestor of all the lines at time $r=0$, but for large $T$, the pLD-ASG will experience a bottleneck with high probability, thus ensuring the entire population at time $r=0$ to descend from our ancestral line. 
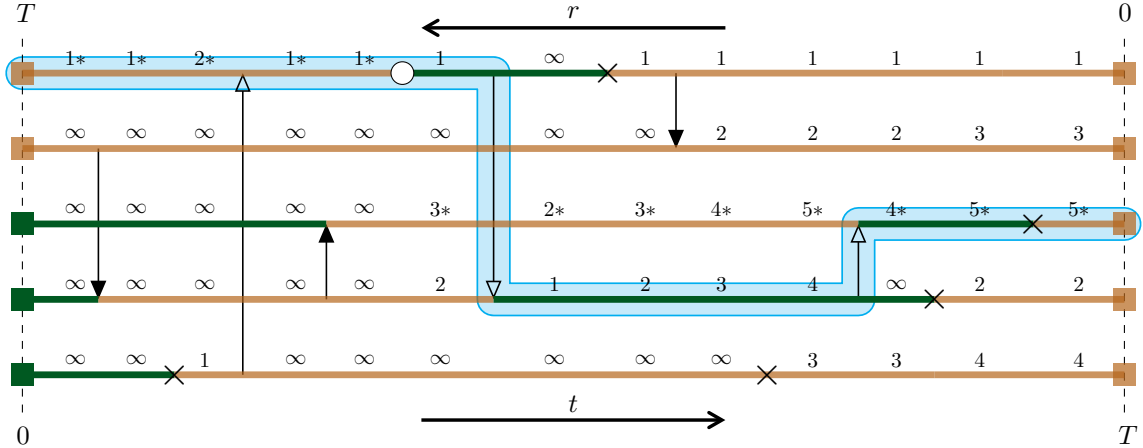
\begin{figure}[t]
	\begin{tikzpicture}
		
		\draw[dashed] (0,-0.5) --(0,4.5);
		\draw[dashed] (14.5,-0.5) --(14.5,4.5);    
		\node [right] at (-0.2,-0.8) {\(0\)};
		\node [right] at (14.3,-0.8) {\(T\)};
		\node [right] at (-0.2,4.8) {\(T\)};
		\node [right] at (14.3,4.8) {\(0\)};
		\draw[-{angle 60[scale=5]},line width=1.3] (5.25,-0.6) -- (9.25,-0.6) node[text=black, pos=.5, yshift=6pt]{\(t\)};
		\draw[-{angle 60[scale=5]},line width=1.3] (9.25,4.6) -- (5.25,4.6) node[text=black, pos=.5, yshift=6pt]{\(r\)};

		\draw[-{triangle 45[scale=5]},semithick,opacity=1] (8.6,4) -- (8.6,3);
		\draw[-{triangle 45[scale=5]},semithick,opacity=1] (4,1) -- (4,2);
		\draw[-{triangle 45[scale=5]},semithick,opacity=1] (1,3) -- (1,1);
		
		\draw[-{open triangle 45[scale=5]},semithick,opacity=1] (11,1) -- (11,2);
		\draw[-{open triangle 45[scale=5]},semithick,opacity=1] (2.9,0) -- (2.9,4);
		\draw[-{open triangle 45[scale=5]},semithick,opacity=1] (6.2,4) -- (6.2,1);

		\draw[darkgreen,opacity=1,line width=0.9mm] (0,0)--(2,0);
		\draw[lightbrown,opacity=0.7,line width=0.9mm] (2,0)--(12,0);
		\draw[lightbrown,opacity=0.7,line width=0.9mm] (12,0)--(14.5,0);
		\node[opacity=1] at (0.7,0.2) {\scalebox{0.8}{\(\infty\)}};
		\node[opacity=1] at (1.5,0.2) {\scalebox{0.8}{\(\infty\)}};
		\node[opacity=1] at (2.4,0.2) {\scalebox{0.8}{\(1\)}};
		\node[opacity=1] at (3.6,0.2) {\scalebox{0.8}{\(\infty\)}};
		\node[opacity=1] at (4.5,0.2) {\scalebox{0.8}{\(\infty\)}};
		\node[opacity=1] at (5.5,0.2) {\scalebox{0.8}{\(\infty\)}};
		\node[opacity=1] at (7,0.2) {\scalebox{0.8}{\(\infty\)}};
		\node[opacity=1] at (8.2,0.2) {\scalebox{0.8}{\(\infty\)}};
		\node[opacity=1] at (9.2,0.2) {\scalebox{0.8}{\(\infty\)}};
		\node[opacity=1] at (10.4,0.2) {\scalebox{0.8}{\(3\)}};
		\node[opacity=1] at (11.5,0.2) {\scalebox{0.8}{\(3\)}};
		\node[opacity=1] at (12.6,0.2) {\scalebox{0.8}{\(4\)}};
		\node[opacity=1] at (13.9,0.2) {\scalebox{0.8}{\(4\)}};
		
		\draw[darkgreen,opacity=1,line width=0.9mm] (0,1)--(1,1);
		\draw[lightbrown,opacity=0.7,line width=0.9mm] (1,1)--(6.2,1);
		\draw[darkgreen,opacity=1,line width=0.9mm] (6.2,1)--(12,1);
		\draw[lightbrown,opacity=0.7,line width=0.9mm] (12,1)--(14.5,1);
		
		\node[opacity=1] at (0.7,1.2) {\scalebox{0.8}{\(\infty\)}};
		\node[opacity=1] at (1.5,1.2) {\scalebox{0.8}{\(\infty\)}};
		\node[opacity=1] at (2.4,1.2) {\scalebox{0.8}{\(\infty\)}};
		\node[opacity=1] at (3.6,1.2) {\scalebox{0.8}{\(\infty\)}};
		\node[opacity=1] at (4.5,1.2) {\scalebox{0.8}{\(\infty\)}};
		\node[opacity=1] at (5.5,1.2) {\scalebox{0.8}{\(2\)}};
		\node[opacity=1] at (7,1.2) {\scalebox{0.8}{\(1\)}};
		\node[opacity=1] at (8.2,1.2) {\scalebox{0.8}{\(2\)}};
		\node[opacity=1] at (9.2,1.2) {\scalebox{0.8}{\(3\)}};
		\node[opacity=1] at (10.4,1.2) {\scalebox{0.8}{\(4\)}};
		\node[opacity=1] at (11.5,1.2) {\scalebox{0.8}{\(\infty\)}};
		\node[opacity=1] at (12.6,1.2) {\scalebox{0.8}{\(2\)}};
		\node[opacity=1] at (13.9,1.2) {\scalebox{0.8}{\(2\)}};
		
		\draw[darkgreen,opacity=1,line width=0.9mm] (0,2)--(4,2);
		\draw[lightbrown,opacity=0.7,line width=0.9mm] (4,2)--(11,2);
		\draw[darkgreen,opacity=1,line width=0.9mm] (11,2)--(13.3,2);
		\draw[lightbrown,opacity=0.7,line width=0.9mm] (13.3,2)--(14.5,2);
		
		\node[opacity=1] at (0.7,2.2) {\scalebox{0.8}{\(\infty\)}};
		\node[opacity=1] at (1.5,2.2) {\scalebox{0.8}{\(\infty\)}};
		\node[opacity=1] at (2.4,2.2) {\scalebox{0.8}{\(\infty\)}};
		\node[opacity=1] at (3.6,2.2) {\scalebox{0.8}{\(\infty\)}};
		\node[opacity=1] at (4.5,2.2) {\scalebox{0.8}{\(\infty\)}};
		\node[opacity=1] at (5.5,2.2) {\scalebox{0.8}{\(3*\)}};
		\node[opacity=1] at (7,2.2) {\scalebox{0.8}{\(2*\)}};
		\node[opacity=1] at (8.2,2.2) {\scalebox{0.8}{\(3*\)}};
		\node[opacity=1] at (9.2,2.2) {\scalebox{0.8}{\(4*\)}};
		\node[opacity=1] at (10.4,2.2) {\scalebox{0.8}{\(5*\)}};
		\node[opacity=1] at (11.5,2.2) {\scalebox{0.8}{\(4*\)}};
		\node[opacity=1] at (12.6,2.2) {\scalebox{0.8}{\(5*\)}};
		\node[opacity=1] at (13.9,2.2) {\scalebox{0.8}{\(5*\)}};
		
		\draw[lightbrown,opacity=0.7,line width=0.9mm] (0,3)--(14.5,3);
	    \node[opacity=1] at (0.7,3.2) {\scalebox{0.8}{\(\infty\)}};
		\node[opacity=1] at (1.5,3.2) {\scalebox{0.8}{\(\infty\)}};
		\node[opacity=1] at (2.4,3.2) {\scalebox{0.8}{\(\infty\)}};
		\node[opacity=1] at (3.6,3.2) {\scalebox{0.8}{\(\infty\)}};
		\node[opacity=1] at (4.5,3.2) {\scalebox{0.8}{\(\infty\)}};
		\node[opacity=1] at (5.5,3.2) {\scalebox{0.8}{\(\infty\)}};
		\node[opacity=1] at (7,3.2) {\scalebox{0.8}{\(\infty\)}};
		\node[opacity=1] at (8.2,3.2) {\scalebox{0.8}{\(\infty\)}};
		\node[opacity=1] at (9.2,3.2) {\scalebox{0.8}{\(2\)}};
		\node[opacity=1] at (10.4,3.2) {\scalebox{0.8}{\(2\)}};
		\node[opacity=1] at (11.5,3.2) {\scalebox{0.8}{\(2\)}};
		\node[opacity=1] at (12.6,3.2) {\scalebox{0.8}{\(3\)}};
		\node[opacity=1] at (13.9,3.2) {\scalebox{0.8}{\(3\)}};

		\draw[lightbrown,opacity=0.7,line width=0.9mm] (0,4)--(5,4);
		\draw[darkgreen,opacity=1,line width=0.9mm] (5,4)--(7.7,4);
		\draw[lightbrown,opacity=0.7,line width=0.9mm] (7.7,4)--(12.9,4);
		\draw[lightbrown,opacity=0.7,line width=0.9mm] (12.9,4)--(14.5,4);
		\node[opacity=1] at (0.7,4.2) {\scalebox{0.8}{\(1*\)}};
		\node[opacity=1] at (1.5,4.2) {\scalebox{0.8}{\(1*\)}};
		\node[opacity=1] at (2.4,4.2) {\scalebox{0.8}{\(2*\)}};
		\node[opacity=1] at (3.6,4.2) {\scalebox{0.8}{\(1*\)}};
		\node[opacity=1] at (4.5,4.2) {\scalebox{0.8}{\(1*\)}};
		\node[opacity=1] at (5.5,4.2) {\scalebox{0.8}{\(1\)}};
		\node[opacity=1] at (7,4.2) {\scalebox{0.8}{\(\infty\)}};
		\node[opacity=1] at (8.2,4.2) {\scalebox{0.8}{\(1\)}};
		\node[opacity=1] at (9.2,4.2) {\scalebox{0.8}{\(1\)}};
		\node[opacity=1] at (10.4,4.2) {\scalebox{0.8}{\(1\)}};
		\node[opacity=1] at (11.5,4.2) {\scalebox{0.8}{\(1\)}};
		\node[opacity=1] at (12.6,4.2) {\scalebox{0.8}{\(1\)}};
		\node[opacity=1] at (13.9,4.2) {\scalebox{0.8}{\(1\)}};
		
		\fill [darkgreen] (-.15,-.15) rectangle (0.15,0.15);
		\fill [darkgreen] (-.15,0.85) rectangle (0.15,1.15);
		\fill [darkgreen] (-.15,1.85) rectangle (0.15,2.15);
		\fill [lightbrown,opacity=0.7] (-.15,2.85) rectangle (0.15,3.15);
		\fill [lightbrown,opacity=0.7] (-.15,3.85) rectangle (0.15,4.15);
		
		\fill [lightbrown,opacity=0.7](14.35,-.15) rectangle (14.65,0.15);
		\fill [lightbrown,opacity=0.7](14.35,0.85) rectangle (14.65,1.15);
		\fill [lightbrown,opacity=0.7](14.35,1.85) rectangle (14.65,2.15);
		\fill [lightbrown,opacity=0.7](14.35,2.85) rectangle (14.65,3.15);
		\fill [lightbrown,opacity=0.7](14.35,3.85) rectangle (14.65,4.15);
		
		\node[opacity=1] at (13.3,2) {\scalebox{1.5}{\(\times\)}};
		\node[opacity=1] at (12,1) {\scalebox{1.5}{\(\times\)}};
		\node[opacity=1] at (9.8,0) {\scalebox{1.5}{\(\times\)}};
		\node[opacity=1] at (7.7,4) {\scalebox{1.5}{\(\times\)}};
		\node[opacity=1] at (2,0) {\scalebox{1.5}{\(\times\)}};
		
		\draw (5,4)[opacity=1] circle (1.5mm)  [fill=white!100];
		
		\begin{pgfonlayer}{background}
			\highlight{4.5mm}{cyan}{(0,4) -- (2.9,4) -- (2.9,4) -- (6.2,4) -- (6.2,1) -- (11,1) -- (11,2) -- (14.5,2)}
		\end{pgfonlayer}
		
\end{tikzpicture}
	\caption{A realisation of the untyped graphical representation of the Moran model in $[0,T]$ for $N=5$ enriched with the (forward) type configuration $(\Xi_t^N)_{t \in [0,T]}$ (given by the colours of the lines: dark green for type 0, light brown for type 1) and the (backward) label processes $(H_r^N)_{r \in [0,T]}$ (given by numbers between $\{1,\ldots,5,\infty\}$ located above each line; the immune line has the additional label $*$). The ancestral line (light blue) is, at any time, the line with the lowest label with type $0$, or the immune line otherwise.\label{ips_tl}}
\end{figure}

Our aim is to construct the common ancestor type process at stationarity in the forward direction of time. Let us anticipate that this process is not Markov; the additional information contained in the graphical representation is required to turn it into a Markov process. As before, let us first consider a finite time window $[0,T]$. We denote by $G_T$ the untyped graphical representation in $[0,T]$ and by $G_T(I)$ its restriction to $I\subseteq[0,T]$. Recall that, given an initial type configuration of the lines present at time $0$ in $G_T$, types can be propagated forward in time along the lines in the graphical representation by respecting the mutation and reproduction events; the so-constructed \emph{type-configuration process} $\Xi^N\defeq(\Xi_t^N)_{t\in[0,T]}$ ($\Xi_t^N \in \{0,1\}^N$ contains the types on all lines at time $t$) is coupled to $G_T$. The pLD-ASG can also be coupled to (embedded in) $G_T$ by assigning, at any time $r$, a label in $\{1,\ldots,N,\infty\}$ to each line in the following way. A label $\infty$ means that the line is currently not in the pLD-ASG, whereas a label $i\in[N]$ indicates the level of that line in the pLD-ASG; an additional label $*$ is used to identify the immune line. So a label is an element of the set \([N]^*_{\infty}\defeq [N]\cup\left\{i* : i\in[N]\right\}\cup\left\{\infty\right\}\). We call the resulting process $H^N\defeq(H_r^N)_{r\in[0,T]}$ the \emph{label process}, and its state space is $([N]^*_{\infty})^N$.

This construction allows us to embed the (forward) type configuration and the (backward) label processes in the same picture (i.e. coupled through $G_T$), see Fig.~\ref{ips_tl}. The process $\mathbb{A}^N\defeq(\mathbb{A}^N_t)_{t\in [0,T]}$, defined via $\mathbb{A}_t^N\defeq (\Xi_t^N,H_{(T-t)-}^N)$, contains all the information required to determine at any time $t\in[0,T]$: (1) the number $Y_t^N$ of individuals of type $1$, (2) the number $\vec{L}_t^N \defeq L^N_{T-t}$ of lines in the time-reversed pLD-ASG, (3) the label $\vec{M}_t^N\defeq M^N_{T-t}$ of the immune line, and (4) the type $A_t^N\in\{0,1\}$ of the ancestral line. Note that the individual processes $\Xi^N$ and $H^N$ can be extended to $[0,\infty)$, but the coupled pair $\mathbb{A}^N$ requires the finite interval $[0,T]$.

By construction, $(H^N)_{r \geq 0}$ is irreducible, and hence it has a unique stationary distribution. Assume now that $\Xi_0^N$ and $H_0^N$ are chosen independently according to their stationary distributions. Since $\Xi_t^N$ and $H_{(T-t)-}^N$ are deterministic functions of their initial configurations and of $G_T([0,t])$ and $G_T((t,T])$, respectively, they are independent (for any fixed time $t$, but not as processes). Thus, the law of $\mathbb{A}_t^N$ is invariant in time and has product form (the product of the corresponding stationary distributions). We will see in Proposition \ref{prop:A_is_Markov} in Appendix~B (Section~\ref{sec:app_B}), that, started with this invariant distribution, the process $\mathbb{A}^N$ has Markovian transitions (and, in particular, we can define it in $[0,\infty)$). Since we are only interested in the transitions of $\mathbb{A}^N$ involving a type change on the ancestral line, we focus on the marginal (non-Markovian) process $A^N$ here and defer the details of the process to Appendix~B.

To get the notions right, let $Z \defeq (Z_t)_{t \geq 0}$ be a (possibly non-Markovian) stochastic process on a finite state space $F$. We understand the probability flux from $z$ to $z' \neq z$, $z, z' \in F$, to be
\begin{equation}\label{eq:marginal_fluxes}
	f_{Z,t}(z,z') \defeq \lim_{\epsilon \to 0} \frac{1}{\epsilon} \P (Z_t=z,Z_{t+\epsilon}=z’)
\end{equation}
if the limit exists; and we take the corresponding rates to be
\begin{equation}\label{eq:marginal_rates}
	q^{}_{Z,t}(z,z') \defeq \frac{f_{Z,t}(z,z')}{\P(Z_t=z)}
\end{equation}
provided $\P(Z_t=z)>0$. If $Z$ is stationary, the fluxes and rates are independent of time, so $ f_{Z,t}(z,z')\equiv f_{Z}(z,z') $ and $q^{}_{Z,t}(z,z') \equiv q^{}_{Z}(z,z')$. Consider now a stationary Markov process $X\defeq(X_t)_{t\geq 0}$ on a finite state space $E$. Let $h:E\to F$ be a surjective bounded function. Suppose the rate of $X$ from $x$ to $x'\neq x$ is $q^{}_{X}(x,x')$, and $\pi^{}_{X}$ is the stationary distribution of $X$. By \eqref{eq:marginal_fluxes} and \eqref{eq:marginal_rates}, the probability fluxes of the mapped process (which may be non-Markovian) always exist and read
\begin{equation}\label{eq:marginal_f}
	\begin{split}
		f_Z(z,z') & = \sum_{\substack{x\in h^{-1}(\{z\}),\\x'\in h^{-1}(\{z'\})}} f_X(x,x') \\
		& = \sum_{\substack{x\in h^{-1}(\{z\}),\\x'\in h^{-1}(\{z'\})}} q^{}_{X}(x,x') \, \pi^{}_{X}(x), \quad z' \neq z, \; z, z' \in F,
	\end{split}
\end{equation}
and the corresponding rates of the mapped process are 
\begin{equation}\label{eq:marginal_q}
	q^{}_Z(z,z') = \frac{f_Z(z,z')}{\pi^{}_{X}(h^{-1}(\{z\}))}
\end{equation}
if $\pi^{}_{X}(h^{-1}(\{z\}))>0$. The quantities $f_Z(z,z')$ and $q^{}_Z(z,z')$ agree with the standard notions of fluxes and rates if $Z$ itself is Markov, but we would like to emphasise that we do not assume this here.

After these preparations, we can provide explicit expressions for the fluxes and rates of the pair $(\vec L^N,A^N)$ and of $A^N$ at stationarity. Here, $\mathbb{A}$ plays the role of the Markov process $X$, with the marginals $(\vec L^N, A^N)$ and $A^N$ as mapped processes.

\begin{theorem}\label{thm:fluxes_rates_finite}
	In the marginal process $({\vec L^N, A^N})$ at stationarity, the probability fluxes involving a beneficial ($1 \to 0$) or deleterious ($0 \to 1$) mutation on the ancestral line are given by
	\begin{equation}\label{finite_LA_benflux}
		f_{({\vec L^N, A^N})}\big ( (n,1),(n+k,0) \big ) = u^N \nu_0 w^N_{n+k} b^N_n , \quad n \in [N], \; k \in [N-n]_0,
	\end{equation}
	and
	\begin{equation}\label{finite_LA_delflux}
		f_{({\vec L^N, A^N})}\big ( (n,0),(n,1) \big ) = u^N \nu_1 w^N_{n} (b^N_{n-1}- b^N_n), \quad n \in [N].
	\end{equation}
	The corresponding transition rates thus read
	\begin{equation}\label{finite_LA_benrate}
		q_{(\vec L^N, A^N)}((n,1),(n+k,0)) = u^N \nu_0 \frac{w^N_{n+k}}{w^N_n}
	\end{equation}
	and
	\begin{equation}\label{finite_LA_delrate}
		\quad q_{(\vec L^N, A^N)}((n,0),(n,1)) = u^N \nu_1 \frac{b^N_{n-1}-b_n^N}{\sum_{j=1}^n (b^N_{j-1}-b_j^N)}.
	\end{equation}
	The mutation fluxes are 
	\begin{equation}\label{finite_fluxes}
		f_{A^N}(1,0) = u^N \nu_0 \sum_{n=1}^{N} a^N_{n-1} b^N_n =
		u^N \nu_1 \sum_{n=1}^N w^N_n (b^N_{n-1}-b^N_n) = f_{A^N}(0,1) ,
	\end{equation}
	and the mutation rates read
	\begin{equation}\label{finite_rates}
		q^{}_{A^N}(1,0) = u^N\nu_0\frac{\sum_{n=1}^N a^{N}_{n-1} b^N_n}{\sum_{k=1}^N w^{N}_k b^N_k } \quad \text{and}\quad
		q^{}_{A^N}(0,1) = u^N\nu_1\frac{\sum_{n=1}^N w^{N}_n (b^N_{n-1}-b^N_n )}{\sum_{k=1}^N a^{N}_{k-1} ( b^N_{k-1}-b^N_k )}. 
	\end{equation}
\end{theorem}

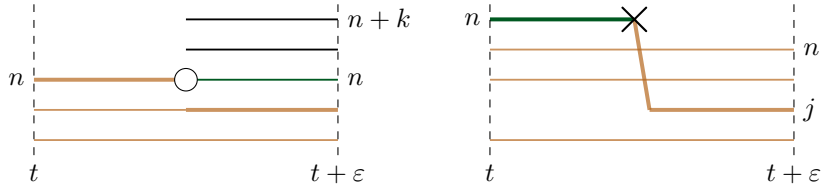
\begin{figure}
	\begin{tikzpicture}
	
	\draw[line width=0.7pt, lightbrown,opacity=0.7] (-5,0)--(-1,0);
	\draw[line width=0.7pt, lightbrown,opacity=0.7] (-5,0.4)--(-3,0.4);
	\draw[line width=1.5pt, lightbrown,opacity=0.7] (-3,0.4)--(-1,0.4);
	\draw[line width=1.5pt, lightbrown,opacity=0.7] (-5,0.8)--(-3,0.8);
	\draw[line width=0.7pt, darkgreen] (-3,0.8)--(-1,0.8);
	\draw[line width=0.7pt] (-3,1.2)--(-1,1.2);
	\draw[line width=0.7pt] (-3,1.6)--(-1,1.6);
	
	\draw[dashed] (-5,-0.2)--(-5,1.8);
	\draw[dashed] (-1,-0.2)--(-1,1.8);
	
	\node[below] at (-5,-0.2) {\(t\)};
	\node[below] at (-1,-0.2) {\(t+\varepsilon\)};
	\node[right] at (-1,0.8) {\(n\)};
	\node[left] at (-5,0.8) {\(n\)};
	\node[right] at (-1,1.6) {\(n+k\)};

	\node[draw,circle,inner sep=3pt, fill=white!100] at (-3,0.8) {};

	\draw[line width=1.5pt, darkgreen] (1,1.6)--(2.9,1.6);
	\draw[line width=1.5pt, lightbrown,opacity=0.7] (2.9,1.6)--(3.1,0.4);
	\draw[line width=1.5pt, lightbrown,opacity=0.7] (3.1,0.4)--(5,0.4);
	
	\draw[line width=0.7pt, lightbrown,opacity=0.7] (1,1.2)--(5,1.2);
	\draw[line width=0.7pt, lightbrown,opacity=0.7] (1,0.8)--(5,0.8);
	\draw[line width=0.7pt, lightbrown,opacity=0.7] (1,0)--(5,0);
	
	\draw[dashed] (1,-0.2)--(1,1.8);
	\draw[dashed] (5,-0.2)--(5,1.8);
	
	\node[below] at (1,-0.2) {\(t\)};
	\node[below] at (5,-0.2) {\(t+\varepsilon\)};
	\node[right] at (5,1.2) {\(n\)};
	\node[left] at (1,1.6) {\(n\)};
	\node[right] at (5,0.4) {\(j\)};

	\node at (2.9,1.6) {\scalebox{2}{\(\times\)}};
	
\end{tikzpicture}
	\caption{\label{fig_mut_ancline} Beneficial and deleterious mutations on the ancestral line. Dark green: type~0; light brown: type~1; bold: immune line.}
\end{figure}

The proof of the equality of the fluxes in \eqref{finite_fluxes} relies on the following simple general fact, which we prove here for convenience and lack of reference.

\begin{lemma}\label{lem:genfact}
	Let $Z=(Z_t)_{t \geq 0}$ be a stationary $\{0,1\}$-valued process whose sequence of jump points has no accumulation point almost surely. Then $f_Z(0,1)=f_Z(1,0)$, where $f_Z(0,1)$ and $f_Z(1,0)$ denote the probabilty fluxes of $Z$ from 0 to 1 and from 1 to 0, respectively.
\end{lemma}
\begin{proof}
	It is clear that, for any $\varepsilon>0$,
	\begin{equation*}
		\P(Z_t=0) = \P(Z_t=0,Z_{t+\varepsilon}=0) + \P(Z_t=0,Z_{t+\varepsilon}=1)
	\end{equation*}
	and
	\begin{equation*}
		\P(Z_{t+\varepsilon}=0) = \P(Z_t=0,Z_{t+\varepsilon}=0) + \P(Z_t=1,Z_{t+\varepsilon}=0).
	\end{equation*}
	The almost sure absence of an accumulation point for the sequence of jump points allows us to write
	\begin{equation*}
		\P(Z_t=0,Z_{t+\varepsilon}=1) = \varepsilon f_Z(0,1) + \scO(\varepsilon)
	\end{equation*}
	and 
	\begin{equation*}
		\P(Z_t=1,Z_{t+\varepsilon}=0) = \varepsilon f_Z(1,0) + \scO(\varepsilon).
	\end{equation*}
	But $\P(Z_t=0)=\P(Z_{t+\varepsilon}=0)$ due to stationarity, so $\varepsilon f_Z(0,1) = \varepsilon f_Z(1,0) + \scO(\varepsilon)$. Dividing by $\varepsilon$ and letting $\varepsilon$ to 0 yields the claim.
\end{proof}

Apart from this, our proof of Theorem \ref{thm:fluxes_rates_finite} will be based on the graphical representation and rely on the following crucial observation. It is obvious from Figures \ref{fig_pldASG_finite} and \ref{fig_mut_ancline} that the ancestral line can display a mutation only if, right before the mutation (in the forward direction, that is, reading the figures from left to right), the ancestral line is at the top and coincides with the immune line. Indeed, the pruning of all lines above a beneficial mutation in the backward direction, and the induced movement of the immune line to the top, means that, forward in time, beneficial mutations can only appear on a top line that is immune. And the only deleterious mutations on the ancestral line are those appearing on the immune line. But a deleterious mutation on the immune line induces movement of the latter to the highest level (in the backward direction), which means that deleterious mutations, too, can only be observed on an immune top line when going forward. With this observation in mind we are ready for the proof.

\begin{proof}[Proof of Theorem \ref{thm:fluxes_rates_finite}]
	Let $\varepsilon > 0$ and consider the event $\{\vec L^N_t=n, A^N_t=1, \vec L^N_{t+\epsilon}=n+k, A^N_{t+\varepsilon}=0\}$ for some $n \in [N]$ and $k \in [N-n]_0$, as shown in the left panel of Figure \ref{fig_mut_ancline}. Up to events with probability of order $\scO(\varepsilon)$, this event happens whenever $\vec L^N _{t+\varepsilon}= n+k$ (that is, at the right of the picture; probability $w^N_{n+k}$), a beneficial mutation occurs on line $n$ between $t$ and $t+\varepsilon$ (probability $\varepsilon u^N \nu_0$) and all $n$ lines are unfit at the left of the picture (probability $b_n^N$). Indeed, since the immune line is at the top to the left of the mutation, this ensures that line $n$ is ancestral and has type $1$ to the left and type $0$ to the right of the event. The corresponding probability thus reads
	\begin{equation}\label{LA_ben_mut}
		\P(\vec L^N_{t+\epsilon}=n+k, A^N_{t+\varepsilon}=0, \vec L^N_{t}=n, A^N_{t}=1) = 
		\varepsilon u^N \nu_0 w^N_{n+k} b^N_n + \scO(\varepsilon);
	\end{equation}
	dividing by $\varepsilon$ and letting $\varepsilon$ to 0 gives \eqref{finite_LA_benflux}. In a similar way, the event $\{\vec L^N_{t+\epsilon}=n,{A^N_{t+\varepsilon}=1},\allowbreak {\vec L^N_t=k, A^N_t=0} \}$, as shown in the right panel of Figure \ref{fig_mut_ancline}, occurs when, up to events with probability of order $\scO(\varepsilon)$, $\vec L^N_{t+\varepsilon}=n$ (probability $w^N_n$), a deleterious mutation occurs on the immune line between $t$ and $t+\varepsilon$ (probability $\varepsilon u^N \nu_1$), and line $n$ to the left of the event is the lowest type-0 level (probability $b_{n-1}^N-b_n^N$). Indeed, since the immune line is relocated to the top due to the mutation, this combination of events ensures that line $n$ at the left is ancestral and has type 0 to the left and type 1 to the right of the event. The corresponding probability thus reads
	\begin{equation}\label{LA_del_mut}
		\P(\vec L^N_{t+\epsilon}=n, A^N_{t+\varepsilon}=1, \vec L^N_{t}=n, A^N_{t}=0) = \varepsilon u^N \nu_1 w^N_n (b^N_{n-1}- b^N_n) + \scO(\varepsilon);
	\end{equation}
	dividing by $\varepsilon$ and letting $\varepsilon$ to 0 gives \eqref{finite_LA_delflux}. Eqs.~\eqref{finite_LA_benrate} and \eqref{finite_LA_delrate} follow immediately from \eqref{finite_LA_benflux} and \eqref{finite_LA_delflux} via \eqref{eq:marginal_q} since $\P(\vec L^N_t=n, A^N_t=1)=w^N_n b^N_n$ and $\P(\vec L^N_t=n, A^N_t=0)=w^N_n \sum_{j=1}^n (b^N_{j-1}-b^N_j)$, respectively. We get to the mutation fluxes by marginalising over $n$ (and $k$) in \eqref{LA_ben_mut} and \eqref{LA_del_mut} using \eqref{eq:marginal_f}; the fluxes are equal due to Lemma~\ref{lem:genfact}. They lead to the rates via Eqns. \eqref{anc_unfit}, \eqref{anc_fit}, and \eqref{eq:marginal_q}.
\end{proof}

It is important to emphasise that the rates $q^{}_{A^N}(1,0)$ and $q^{}_{A^N}(0,1)$ are averaged over $\vec L^N$. And let us mention an alternative argument that leads to the mutation fluxes via the forward view with an aspect of projection into the future. Namely, conditional on $Y^N_t=k$, any of the $k$ unfit individuals turns into a fit one at rate $u^N\nu_0$, and then it is ancestor with the probability \eqref{good_ind_anc}; the latter automatically implies that the unfit predecessor is also ancestral. We therefore get
\begin{align*}
	f_{A^N}(1,0) & = u^N \nu_0 \sum_{k=1}^N k \sum_{n=1}^N a^N_{n-1} \frac{(k-1)^{\underline {n-1}}}{N^{\underline {n}}} \P(Y^N_t=k) \\
	& = u^N \nu_0 \sum_{n=1}^N a^N_{n-1} b^N_n.
\end{align*}
Likewise, with \eqref{bad_ind_anc} for deleterious mutations,
\begin{align*}
	f_{A^N}(0,1) & = u^N \nu_1 \sum_{k=1}^N (N-k) \sum_{n=1}^N w^N_{n} \frac{k^{\underline {n-1}}}{N^{\underline {n}}} \P(Y^N_t=k) \\
	& = u^N \nu_1 \sum_{k=1}^N \sum_{n=1}^N w^N_{n} \frac{k^{\underline {n-1}}}{N^{\underline {n-1}}} \frac{N-k}{N-n+1}\P(Y^N_t=k) \\
	& = u^N \nu_1 \sum_{n=1}^N w^N_{n} \big ( b^N_{n-1} - b^N_n \big ).
\end{align*}

\subsection{Diffusion limit}
Since the process $\cL$ is once more positive recurrent, the notion of common ancestor is again well posed in the diffusion limit \citep{len}, and in the long term, the common ancestor agrees with the true ancestor in our construction. Let us now consider the fluxes and rates along the (common) ancestor line in this limit, which are defined as ${f_{\cA}(1,0)\defeq \lim_{N \to \infty} N f_{A^N}(1,0)}$, $q^{}_{\cA}(1,0)\defeq \lim_{N \to \infty} N q_{A^N}(1,0)$ (due to the rescaling of time), and likewise for $f_{\cA}(0,1)$ and $q_{\cA}(0,1)$. As a simple consequence of Theorem \ref{thm:fluxes_rates_finite}, we obtain
\begin{coro} \label{coro:diff_marg_f_q}	
The mutation fluxes and rates of the common ancestor type process in the diffusion limit at stationarity read
	\begin{equation*}
		\begin{split}
			f_{\cA}(1,0) &= \theta \nu_0 \sum_{n>0} \alpha_{n-1} \beta_n, \\
			f_{\cA}(0,1) &= \theta \nu_1 \sum_{n>0} \omega_{n} (\beta_{n-1}-\beta_n), \\
			q^{}_{\cA}(1,0) &= \theta \nu_0\frac{\sum_{n>0} \alpha_{n-1} \beta_n}{\sum_{m>0}\omega_m \beta_m}, \quad \text{and}\\
			q^{}_{\cA}(0,1) &= \theta \nu_1\frac{\sum_{n>0}(\alpha_{n-1}-\alpha_n)(\beta_{n-1}-\beta_n )}{\sum_{m>0} \alpha_{m-1}(\beta_{m-1}-\beta_m)}
		\end{split} 
	\end{equation*}
	with $\alpha_n$, $\beta_n$, and $\omega_n$ as in Section~\ref{sec:difflimit}. 
\end{coro}
\begin{proof}
	The explicit expressions follow from \eqref{finite_fluxes} and \eqref{finite_rates} together with the convergence of $Nu^N$ to $\theta$ and the $a^N_n$ and $b^N_n$ to $\alpha_n$ and $\beta_n$, respectively.
\end{proof}

Note that the mutation rates in the corollary were already discovered by \cite[Corollary 2]{Fearnhead} by mainly analytical methods; rederiving them here on the grounds of the pLD-ASG greatly simplifies the calculation and provides a lucid and intuitive meaning.

\subsection{Deterministic limit}
In contrast to the diffusion limit and the finite case, a common ancestor of the entire population does not exist in the deterministic limit; this is due to the absence of coalescence events. Nevertheless, the ancestral line of a randomly-chosen individual continues to be well defined. Let us now consider the rates along this line, which are defined as $q^{}_{A}(1,0)\defeq \lim_{N \to \infty} q_{A^N}(1,0)$, and likewise for $q_{A}(0,1)$. They once more follow from Theorem \ref{thm:fluxes_rates_finite}.

\begin{coro}\label{mutrates_det} 
	The mutation rates of the type process on the ancestral line in the deterministic limit at stationarity read
	\begin{equation*}
		q^{}_{A}(1,0) = u\nu_0\frac{1}{1-p} \quad \text{and } \; 
		q^{}_{A}(0,1) = u\nu_1(1-p)
	\end{equation*}
	with $p$ of \eqref{eq_p}.
\end{coro}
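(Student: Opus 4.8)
The plan is to read off \( q_A(1,0) \) and \( q_A(0,1) \) directly from the finite-\(N\) marginal rates \eqref{finite_rates} of Theorem~\ref{thm:fluxes_rates_finite} and pass to the deterministic limit, in which all three ingredients are explicit. By \eqref{geo} the tail probabilities and stationary weights converge to the geometric quantities \( a_n = p^n \) and \( w_n = p^{n-1}(1-p) \), while by \eqref{DistrConvDet} the sampling moments converge to \( b_n = y_\infty^{\,n} \). Since the deterministic rates are defined without the extra factor of \(N\) (because \( u^N \equiv u \) is not rescaled), it suffices to replace each of the four sums in \eqref{finite_rates} by its limit and to evaluate the resulting geometric series.

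For the beneficial rate I would substitute \( a_{n-1} b_n = p^{n-1} y_\infty^{\,n} \) in the numerator and \( w_k b_k = (1-p) p^{k-1} y_\infty^{\,k} \) in the denominator of the first expression in \eqref{finite_rates}. Both are geometric series with common ratio \( p\, y_\infty < 1 \), summing to \( y_\infty/(1-p\,y_\infty) \) and \( (1-p)\,y_\infty/(1-p\,y_\infty) \) respectively, so their quotient is \( 1/(1-p) \) and hence \( q_A(1,0) = u\nu_0/(1-p) \). For the deleterious rate I would use \( b_{n-1}-b_n = (1-y_\infty)\,y_\infty^{\,n-1} \); then the numerator \( \sum_n w_n(b_{n-1}-b_n) \) and the denominator \( \sum_k a_{k-1}(b_{k-1}-b_k) \) are again geometric with ratio \( p\,y_\infty \), equal to \( (1-p)(1-y_\infty)/(1-p\,y_\infty) \) and \( (1-y_\infty)/(1-p\,y_\infty) \), whose quotient is \( 1-p \); thus \( q_A(0,1) = u\nu_1(1-p) \). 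These are the two claimed identities, and this part is a routine summation.

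The step I expect to require the most care is justifying that the limit \( N\to\infty \) may be taken inside the four infinite sums, i.e.\ that \( \lim_N \sum_n a^N_{n-1} b^N_n = \sum_n a_{n-1} b_n \) and likewise for the other three. Pointwise convergence of the summands is already available, so by Fatou the limiting series bound the liminf from below; the real content is an upper bound, i.e.\ domination uniform in \(N\) to exclude an escape of mass through indices \( n \) close to \(N\). Here I would invoke a uniform geometric tail control for the \(a^N_n\) (a bound \( a^N_n \le C\rho^{\,n} \) with \( \rho<1 \) and \(C\) independent of \(N\), reflecting the rapid decay noted after \eqref{rec_a} and readable off the recursion \eqref{rec_a} or from the domination of \(L^N\) by a birth–death process), together with the trivial bounds \( 0\le b^N_n\le 1 \) and \( 0 \le b^N_{n-1}-b^N_n \le 1 \). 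This yields a summable dominating sequence, so dominated convergence applies and the ratio of the finite-\(N\) sums converges to the ratio of the limiting geometric series evaluated above; one may also note that the denominators \( \sum_k w^N_k b^N_k = \P(A^N_\infty=1) \) and \( \sum_k a^N_{k-1}(b^N_{k-1}-b^N_k) = \P(A^N_\infty=0) \) are probabilities by \eqref{anc_unfit} and \eqref{anc_fit}, hence automatically bounded. Once this interchange is secured, the corollary follows, the remainder being the elementary bookkeeping of geometric sums.
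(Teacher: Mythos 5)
Your proposal is correct and follows essentially the same route as the paper: substitute the explicit geometric quantities $a_n=p^n$, $w_n=p^{n-1}(1-p)$, $b_n=y_\infty^n$ into \eqref{finite_rates} and evaluate the resulting geometric series, which the paper's proof does in one display. The only difference is that you spell out the dominated-convergence justification for passing the limit $N\to\infty$ inside the four sums, a step the paper treats as immediate; this is a welcome (if routine) extra bit of rigour rather than a different approach.
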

\begin{proof}
	The result is immediate if we take the limits in \eqref{finite_rates} and use \eqref{DistrConvDet} as well as \eqref{geo}:
	\begin{align*}
		q^{}_{A}(1,0) & = u\nu_0\frac{\sum_{n=1}^{\infty}	a_{n-1} y_\infty^{n}}{\sum_{m=1}^\infty w_m y_\infty^m} = u\nu_0\frac{1}{1-p} \quad \text{and} \\
		q^{}_{A}(0,1) & = u\nu_1\frac{\sum_{n=1}^{\infty} w_n	y_\infty^{n-1}(1-y_\infty)}{\sum_{m=0}^\infty a_{m} y_\infty^{m}(1-y_\infty)} = u\nu_1 (1-p)
	\end{align*}
	with $a_n$, $b_n$, and $w_n$ as in Section~\ref{sec:detlimit}. 
\end{proof}
\begin{remark}\label{georgii}
	Let us mention that the resulting process is similar in spirit to the retrospective process of \citet[Definition~3.1]{Georgii}, which describes the mutation process on the ancestral line of a typical individual in a multitype branching process. Despite the different model, the retrospective process (when specialised to 2 types)  is closely related to the one described here, since $g(y_\infty)/y_\infty$ and $(1-g(y_\infty))/(1-y_\infty)$ of \eqref{expoff} have the meaning of the long-term offspring amount of a type-1 and a type-0 individual, respectively, in a stationary population.
\end{remark}

\section{Flux identities}
\label{section_fluxes}
It is clear (and has already been mentioned in Theorem~\ref{thm:fluxes_rates_finite}) that, at stationarity, the mean beneficial and deleterious mutation fluxes on the ancestral line balance each other. However, this is anything but obvious from the explicit expressions. Specifically, it turns out that the equality in \eqref{finite_fluxes} does, in general, not hold term by term. That is, for the fluxes $f^{(n)}_{A^N}(1,0)$ and $f^{(n)}_{A^N}(0,1)$ of beneficial and deleterious mutations at level $n$, we have
\begin{equation}\label{partial_fluxes}
	f^{(n)}_{A^N}(1,0) \defeq u^N \nu^{}_0 a^{N}_{n-1} b^N_n \neq u^N \nu^{}_1 w^{N}_n (b^N_n-b^N_{n-1} ) \eqdef f^{(n)}_{A^N}(0,1), \quad n>0, 
\end{equation}
as illustrated in Figure~\ref{fig:partial_fluxes}. Hence, the beneficial and deleterious mutation fluxes do not balance each other on every individual level. It is therefore interesting to investigate these individual fluxes. 

\begin{prop}[flux identities]\label{ind_fluxes}
	In the finite system, the individual fluxes at stationarity satisfy
	\begin{equation*}
		f^{(n)}_{A^N}(1,0) + \sum_{i=n+1}^N w^{N}_i (b^N_{i-1}-b^N_i) = f^{(n)}_{A^N}(0,1) + (n-1) w^{N}_n (b^N_{n-1}-b^N_{n} ), \quad n>0.
	\end{equation*}
\end{prop}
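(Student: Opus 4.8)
The plan is to prove the identity as a stationary flux balance for the Markov process $\mathbb{A}^N$, refining the graphical bookkeeping already used in the proof of Theorem~\ref{thm:fluxes_rates_finite}. The two quantities $f^{(n)}_{A^N}(1,0)=u^N\nu_0 a^N_{n-1}b^N_n$ and $f^{(n)}_{A^N}(0,1)=u^N\nu_1 w^N_n(b^N_{n-1}-b^N_n)$ are, respectively, the incoming beneficial and outgoing deleterious mutation fluxes acting on the ancestral line while it occupies level $n$ at the top of the graph, and the crucial observation recalled just before that proof guarantees these are the only mutation events touching the ancestral line. The remaining two terms, $\sum_{i=n+1}^N w^N_i(b^N_{i-1}-b^N_i)$ and $(n-1)w^N_n(b^N_{n-1}-b^N_n)$, must therefore be accounted for by the \emph{non-mutational} events -- coalescence, branching and collision -- that move the ancestral line (equivalently, the lowest type-$0$ line, or the immune line) across level $n$. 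The entry point is the product-form stationary law established above, which assigns weight $w^N_n(b^N_{n-1}-b^N_n)$ to the configuration ``$\vec L^N=n$, lines $1,\dots,n-1$ of type $1$, line $n$ of type $0$'' (so that the ancestral line is simultaneously the lowest type-$0$ line and the top line), and weight $a^N_{n-1}b^N_n$ to the type-$1$ configurations that feed a beneficial event.

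Concretely, I would: (i) write down the stationary probabilities of the configurations ``ancestral line at the top at level $n$, type $0$'' and ``\dots, type $1$'', using that a type-$0$ ancestral line at the top forces lines $1,\dots,n-1$ to be of type $1$; (ii) enumerate all transitions of $\mathbb{A}^N$ entering and leaving the event $\{$ancestral line at the top at level $n\}$, reading off $f^{(n)}_{A^N}(1,0)$ as the incoming beneficial flux and $f^{(n)}_{A^N}(0,1)$ as the outgoing deleterious flux, and matching the coalescence/relocation fluxes with the two correction terms; and (iii) assemble the balance and simplify it using the elementary relation $w^N_n=a^N_{n-1}-a^N_n$ together with the recursions \eqref{rec_a} and \eqref{rec_b}. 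The combinatorial factor $n-1$ is expected to enter here: in the relevant top configuration there are exactly $n-1$ lines \emph{below} the ancestral line, each of which may coalesce downward and relocate the ancestral line, pushing probability out of level $n$, whereas deleterious events at the higher levels $i>n$ feed probability back through the pruning and relocation of the immune line, producing the tail sum $\sum_{i>n}w^N_i(b^N_{i-1}-b^N_i)$.

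As a cleaner alternative, and a valuable consistency check, I would also attempt a purely algebraic verification: writing $d^N_n\defeq b^N_{n-1}-b^N_n$ and letting $G_n$ denote (LHS $-$ RHS), show $G_n\equiv 0$ by downward induction on $n$. The top case $n=N$ should collapse, via $a^N_N=0$ and $w^N_N=a^N_{N-1}$, to a relation between $b^N_N$ and $d^N_N$ that is the boundary instance of the sampling recursion \eqref{rec_b}; the step $G_n-G_{n+1}=0$ would then follow by substituting \eqref{rec_a} for $a^N_n$ and \eqref{rec_b} for $b^N_n$ and collecting terms, the telescoping tail sum contributing precisely $w^N_{n+1}d^N_{n+1}$ at each stage. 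I expect the main obstacle -- common to both routes -- to be the reconciliation of the \emph{discrete} relocation combinatorics (the integer count $n-1$ and the tail sum) with the coefficients of the recursions, which carry the $1/N$ normalisation of the coalescence rate in \eqref{pldASG_finite_rates}; already the base case $n=N$ is a microcosm of this difficulty, since the factor multiplying $d^N_N$ must be produced by coalescence among the lines below the top and then matched against the recursion coefficient. Once this accounting of how coalescence simultaneously relocates the ancestral line and the immune line is made precise, the balance closes and yields the stated identity.
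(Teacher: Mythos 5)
Your second, algebraic route is essentially the paper's own proof: the paper rewrites \eqref{rec_a} and \eqref{rec_b} in difference form, combines them into a single one-step identity in $i$, and telescopes the sum over $i\in[n:N]$ using the boundary values $a^N_N=a^N_{N+1}=0=b^N_{N+1}$ --- which is exactly your downward induction, with the tail sum contributing one term $w^N_{i}(b^N_{i-1}-b^N_{i})$ per stage as you predict. Your first, probabilistic route (a stationary flux balance for the event that the ancestral line occupies the top position at level $n$) is genuinely different; the paper invokes that picture only \emph{a posteriori}, to interpret the extra terms as coalescence fluxes (Figure~\ref{fig:flux_id}), and does not attempt to prove the identity that way.

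That said, neither route is actually executed, and the obstacle you flag is precisely where the content of the proof lies, so as it stands the proposal has a genuine gap rather than being a proof. Carrying out the combination of \eqref{rec_a} and \eqref{rec_b} and telescoping yields
\[
u^N\nu_0\, a^N_{n-1}b^N_n \;+\; \frac{1}{N}\sum_{i=n+1}^N w^N_i\,(b^N_{i-1}-b^N_i) \;=\; \Big(\frac{n-1}{N}+u^N\nu_1\Big)\, w^N_n\,(b^N_{n-1}-b^N_n),
\]
that is, the two coalescence terms carry a prefactor $1/N$ (the normalisation of the pairwise coalescence rate in \eqref{pldASG_finite_rates}) which is absent from the display in the Proposition. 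So the ``reconciliation of the discrete relocation combinatorics with the coefficients of the recursions'' that you defer to the end is not a technicality: it is the point at which the computation and the stated identity must be confronted, and one cannot assert that ``the balance closes and yields the stated identity'' without doing the algebra. To complete the argument along your second route you need to actually derive the one-step identity
\[
u^N\nu_0\,(a^N_{i-1}b^N_i-a^N_i b^N_{i+1})+\frac{1}{N}(a^N_i-a^N_{i+1})(b^N_i-b^N_{i+1})
= T_{i-1}-T_i, \qquad T_i \defeq \Big(\frac{i}{N}+u^N\nu_1\Big)(a^N_i-a^N_{i+1})(b^N_i-b^N_{i+1}),
\]
and sum it; along your first route you would additionally have to verify that the pLD-ASG transition rates reproduce exactly these coefficients, which is considerably more delicate than the sketch suggests.
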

\begin{figure}
	\includegraphics[width=0.6\textwidth]{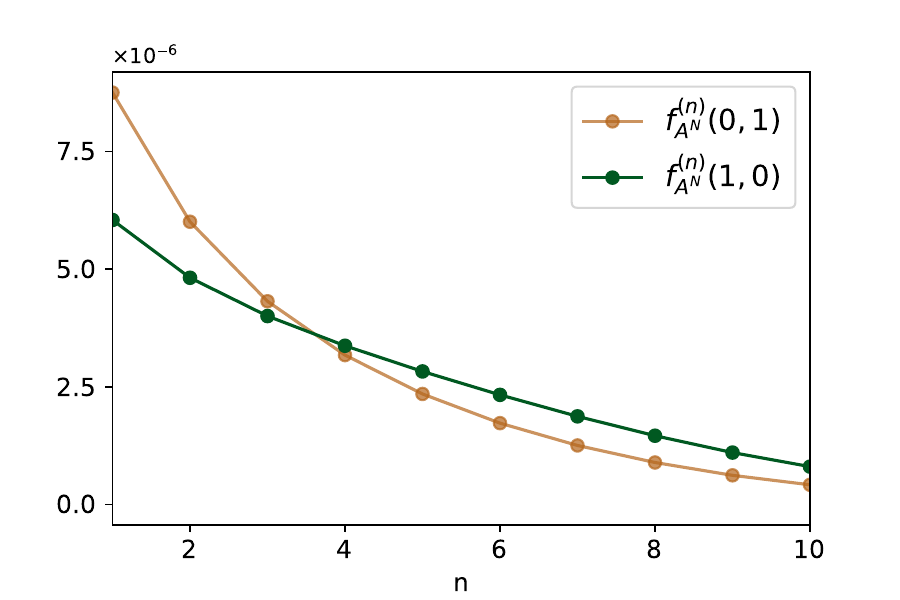}\caption{\label{fig:partial_fluxes} The individual mutation fluxes of \eqref{partial_fluxes}. Parameters: ${N=10^4},\allowbreak s^N=1.5 \cdot 10^{-3}, u^N=8 \cdot 10^{-4}, \nu_1=0.99$.}
\end{figure}
\begin{proof}
	We start from the recursions \eqref{rec_a} and \eqref{rec_b} and rewrite them as
	\begin{align}
		\Big (\frac{i+1}{N}+u^N \nu_1\Big)(a_i^N-a_{i+1}^N) &= s^N\frac{N-i}{N}(a_{i-1}^N-a_i^N) - u^N \nu_0 a_{i}^N,\ \quad 0 < i < N, \label{rec_a_diff}\\
		\Big (\frac{i-1}{N}+u^N \nu_1\Big )(b_{i-1}^N-b_{i}^N) &= s^N\frac{N-i}{N}(b_{i}^N-b_{i+1}^N) + u^N \nu_0b_{i}^N, \quad 0 < i \leq N. \label{rec_b_diff}
	\end{align}
	We now introduce 
	\begin{equation*}
		S_n \defeq \sum_{i=n}^N(a_{i-1}^N-a_{i}^N)(b_{i-1}^N-b_{i}^N).
	\end{equation*}
	We then have
	\begin{equation*}
		\sum_{n=1}^N S_{n+1} = \sum_{i=1}^N(i-1)(a_{i-1}^N-a_{i}^N)(b_{i-1}^N-b_{i}^N).
	\end{equation*}
	Using \eqref{rec_a_diff} in the first step and \eqref{rec_b_diff} in the last one, we get for $i \geq 1$:
	\begin{align*}
		&\Big[u^N \nu_0 a_i^N + \Big(\frac{i+1}{N}+u^N\nu_1\Big)(a_i^N-a_{i+1}^N)\Big](b_i^N-b_{i+1}^N)+u^N \nu_0(a_{i-1}^N-a_i^N)b_i^N\\
		&\hspace{1cm}=s^N \frac{N-i}{N}(a_{i-1}^N-a_i^N)(b_i^N-b_{i+1}^N)+u^N\nu_0(a_{i-1}^N-a_i^N)b_i^N\\
		&\hspace{1cm}=(a_{i-1}^N-a_i^N)\Big[s^N \frac{N-i}{N}(b_i^N-b_{i+1}^N)+u^N\nu_0b_i^N\Big]\\
		&\hspace{1cm}=(a_{i-1}^N-a_i^N)(b_{i-1}^N-b_{i}^N)\Big(\frac{i-1}{N}+u^N \nu_1\Big).
	\end{align*}
	This can be rewritten as
	\begin{align*}
		&u^N\nu_0(a_{i-1}b_i^N-a_i^Nb_{i+1}^N)+\frac{1}{N}(a_i^N-a_{i+1}^N)(b_i^N-b_{i+1}^N)\\
		&\hspace{1cm}=(a_{i-1}^N-a_i^N)(b_{i-1}^N-b_i^N)\Big(\frac{i-1}{N}+u^N \nu_1\Big)-\Big(\frac{i}{N}+u^N \nu_1\Big)(a_i^N-a_{i+1}^N)(b_i^N-b_{i+1}^N).
	\end{align*}
	Summing over $i \in [k:N]$ on both sides leads to
	\begin{align*}
		&u^N \nu_0a_{n-1}^Nb_n^N-u^N \nu_0a_{N}^Nb^N_{N+1}+\frac{1}{N}S_{n+1}\\
		&\hspace{1cm}=(a_{n-1}^N-a_n^N)(b_{n-1}^N-b_n^N)\Big(\frac{n-1}{N}+u^N \nu_1\Big)-(a_{N}^N-a_{N+1}^N)(b_{N}^N-b_{N+1}^N)\left(1+u^N \nu_1\right);
	\end{align*}
	considering that $a_N^N=a_{N+1}^N=0=b^N_{N+1}$,
	this yields the claim.
\end{proof}
Note that the proposition says that the individual fluxes balance each other together with certain coalescence events, see Figure~\ref{fig:flux_id}. In detail, the flux \(f^{(n)}_{A^N}(1,0)\) of beneficial mutations at level \(n\) together with one-half the flux of coalescences related to arrows sent out by an unfit line \(n\) and pushing the first fit line upwards (forward in time) equals the flux \(f^{(n)}_{A^N}(0,1)\) of deleterious mutations at level \(n\) together with one-half the flux of coalescences that push line \(n\) upwards when it is the first fit line. Summing both sides in the statement of Proposition~\ref{ind_fluxes} over \(n\), we find:
\begin{align*}
	u^N \nu_0& \sum_{n=1}^Na_{n-1}^Nb_n^N \\ &=\frac{1}{N}\sum_{n=1}^N(n-1) w_n^N(b_{n-1}^N-b_n^N) - \frac{1}{N}\sum_{n=1}^N S_{n+1} +\sum_{n=1}^N u^N \nu_1w_n^N(b_{n-1}^N-b_n^N),
\end{align*}
which is the identity of the fluxes in Theorem~\ref{thm:fluxes_rates_finite}, as it must be.

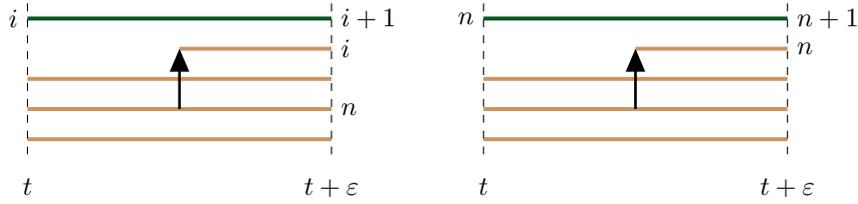
\begin{figure}
	\begin{tikzpicture}[scale=1]
	
	\draw [color = lightbrown,opacity=0.7,line width=1.5](-5,0)--(-1,0);
	\draw [color = lightbrown,opacity=0.7,line width=1.5](-5,0.4)--(-1,0.4);
	\draw [color = lightbrown,opacity=0.7,line width=1.5](-5,0.8)--(-1,0.8);
	\draw [color = lightbrown,opacity=0.7,line width=1.5](-3,1.2)--(-1,1.2);
	\draw [color = darkgreen,line width=1.5] (-5,1.6)--(-1,1.6);

	\draw [color = lightbrown,opacity=0.7,line width=1.5](1,0)--(5,0);
	\draw [color = lightbrown,opacity=0.7,line width=1.5](1,0.4)--(5,0.4);
	\draw [color = lightbrown,opacity=0.7,line width=1.5](1,0.8)--(5,0.8);
	\draw [color = lightbrown,opacity=0.7,line width=1.5](3,1.2)--(5,1.2);
	\draw [color = darkgreen,line width=1.5] (1,1.6)--(5,1.6);
	
	\draw[-{triangle 45},line width=1] (3,0.4) -- (3,1.2);
	\draw[-{triangle 45},line width=1] (-3,0.4) -- (-3,1.2);

	\node[below=0.4] at (-5,0) {\scalebox{1}{\(t\)}};
	\node[below=0.4] at (-1,0) {\scalebox{1}{\(t+\varepsilon\)}};
	\node[below=0.4] at (1,0) {\scalebox{1}{\(t\)}};
	\node[below=0.4] at (5,0) {\scalebox{1}{\(t+\varepsilon\)}};
	
	\node[left] at (-5,1.6) {\scalebox{1}{\(i\)}};
	\node[left] at (1,1.6) {\scalebox{1}{\(n\)}};
	\node[right] at (-1,1.2) {\scalebox{1}{\(i\)}};
	\node[right] at (5,1.2) {\scalebox{1}{\(n\)}};
	\node[right] at (-1,0.4) {\scalebox{1}{\(n\)}};
	
	\node[right] at (-1,1.6) {\scalebox{1}{\(i+1\)}};
	\node[right] at (5,1.6) {\scalebox{1}{\(n+1\)}};
	
	\draw[dashed] (-1,1.8)--(-1,-0.2);
	\draw[dashed] (-5,1.8)--(-5,-0.2);
	\draw[dashed] (1,1.8)--(1,-0.2);
	\draw[dashed] (5,1.8)--(5,-0.2);
	
\end{tikzpicture}
	\caption{\label{fig:flux_id} The coalescence events appearing in the flux identities of Proposition~\ref{ind_fluxes}. The corresponding mutation events are those of Figure~\ref{fig_mut_ancline}.}
\end{figure}

The following corollary is an immediate consequence of Proposition \ref{ind_fluxes}.
\begin{coro}
	In the diffusion limit at stationarity, the individual fluxes of Proposition \ref{ind_fluxes} turn into
	\begin{equation*}
		\theta \nu_0 \alpha_{n-1} \beta_n + \sum_{i>n} \omega_i(\beta_{i-1}-\beta_i) = \big (\theta \nu^{}_1 + (n-1) \big ) \omega_n (\beta_{n-1}-\beta_{n} ), \quad n > 0,
	\end{equation*}
	while in the deterministic limit at stationarity, they read
	\begin{equation*}
		u\nu_0 a_{n-1} b_n = u \nu_1 w_n (b_{n-1}-b_{n}), \quad n>0,
	\end{equation*}
	that is, the mutation fluxes do balance each other at every level in this case.
\end{coro}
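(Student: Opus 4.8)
The plan is to obtain both limiting identities directly from the finite flux identity of Proposition~\ref{ind_fluxes} by passing to the limits of Sections~\ref{sec:difflimit} and~\ref{sec:detlimit}, the only genuine bookkeeping being the $N$-scaling of the various terms. The decisive structural point is that the two mutation fluxes $f^{(n)}_{A^N}(1,0)$ and $f^{(n)}_{A^N}(0,1)$ carry the factor $u^N$, whereas the two coalescence contributions $\sum_{i>n} w^N_i(b^N_{i-1}-b^N_i)$ and $(n-1) w^N_n(b^N_{n-1}-b^N_n)$ are of order $1/N$, since they are inherited from the finite coalescence rate $n(n-1)/N$ of \eqref{pldASG_finite_rates}. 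This discrepancy in the $N$-dependence is exactly what makes the coalescence terms persist in the diffusion limit while dropping out in the deterministic one.

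For the diffusion limit I would multiply the finite identity by $N$, because time is accelerated by a factor $N$ in this limit (as reflected in the definition $f_{\cA}\defeq\lim_{N} N f_{A^N}$ of the total fluxes). I then insert the convergences recorded in Section~\ref{sec:difflimit}, namely $N u^N \to \theta$, $a^N_n \to \alpha_n$, $b^N_n \to \beta_n$ and $w^N_n \to \omega_n$. Term by term this gives $N f^{(n)}_{A^N}(1,0) \to \theta\nu_0 \alpha_{n-1}\beta_n$ and $N f^{(n)}_{A^N}(0,1) \to \theta\nu_1 \omega_n(\beta_{n-1}-\beta_n)$, while the two coalescence terms, now of order one after multiplication by $N$, tend to $\sum_{i>n}\omega_i(\beta_{i-1}-\beta_i)$ and $(n-1)\omega_n(\beta_{n-1}-\beta_n)$. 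Since the deleterious-mutation limit and the second coalescence limit share the common factor $\omega_n(\beta_{n-1}-\beta_n)$, collecting them on the right produces the prefactor $\theta\nu_1+(n-1)$ and hence the stated identity.

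For the deterministic limit no rescaling is required: I let $N\to\infty$ with $u^N\equiv u$ and $s^N\equiv s$ directly in the finite identity, use $a^N_n\to a_n$, $w^N_n\to w_n$ and $b^N_n\to b_n=y_\infty^n$ from Section~\ref{sec:detlimit}, and observe that both coalescence contributions vanish, being $1/N$ times bounded quantities. What survives is precisely the bare balance $u\nu_0 a_{n-1}b_n = u\nu_1 w_n(b_{n-1}-b_n)$, which one may cross-check against the explicit forms $a_n=p^n$ and $w_n=p^{n-1}(1-p)$ of \eqref{geo}. The single step that is not purely mechanical, and which I expect to be the main obstacle, is the term-by-term passage to the limit in the growing sum $\sum_{i=n+1}^N$ appearing in the diffusion case: this is an interchange of limit and infinite summation, which I would justify by dominated convergence, controlling the summands by a uniform-in-$N$ geometric tail bound for $w^N_i$ (obtained from the domination of the line-counting process by a Yule process) together with the telescoping estimate $\sum_{i>n}(b^N_{i-1}-b^N_i)\le b^N_n\le 1$.
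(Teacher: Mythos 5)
Your argument is correct and is precisely the limit passage the paper intends (the corollary is presented there as an immediate consequence of Proposition~\ref{ind_fluxes} with no proof given); you are also right to read the two coalescence contributions as carrying the factor $1/N$ inherited from the pairwise coalescence rate --- this factor appears in the proof of Proposition~\ref{ind_fluxes} (via $\tfrac{1}{N}S_{n+1}$ and $\tfrac{n-1}{N}$) even though it is suppressed in the displayed statement, and without it the multiplication by $N$ in the diffusion limit would make the coalescence sums diverge. One small repair to your technical aside: domination of $L^N$ by a Yule process gives non-explosivity but says nothing about the stationary tails $w^N_i$ (a Yule process has no stationary law), yet the interchange of limit and sum needs no such bound, since $0\le w^N_i\le 1$ yields $\sum_{i>M} w^N_i(b^N_{i-1}-b^N_i)\le b^N_M \to \beta_M$, which vanishes as $M\to\infty$, so truncating the sum at $M$, letting $N\to\infty$ termwise, and then $M\to\infty$ (with Fatou, or simply the monotonicity of the tail, for the matching lower bound) identifies the limit as $\sum_{i>n}\omega_i(\beta_{i-1}-\beta_i)$.
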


\section{Parameter dependence}\label{sec_parameters}
The focal parameter is the selective parameter $s^N$, which is the branching rate in the backward time direction. We will determine here its effect on the tail probabilities, the sampling probabilites, and the mutation rates and fluxes on the ancestral line.

\subsection{The tail probabilities}
\begin{prop}\label{prop:tailprobs}
	The tail probabilities $a^N_n$ (for $n \in [N]$), $\alpha^{}_n$ (for $n \in \N$), and $a^{}_n$ (for $n \in \N$) are strictly increasing in $s^N, \sigma$, and $s$, respectively.
\end{prop}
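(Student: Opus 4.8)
The plan is to prove monotonicity by differentiating the defining recursions with respect to the selection parameter and showing that the derivative of the tail sequence is strictly positive. I treat the finite case first, since the diffusion case is analogous and the deterministic case is explicit. Because the $a^N_n$ solve the finite linear system \eqref{rec_a} with $s^N$-independent boundary data $a^N_0=1$, $a^N_N=0$, and the coefficient matrix is invertible for every $s^N\ge 0$ (see below), Cramer's rule shows that each $a^N_n$ is a rational, hence smooth, function of $s^N$; so $\dot a^N_n \defeq \partial_{s^N} a^N_n$ is well defined, with $\dot a^N_0=\dot a^N_N=0$. Differentiating \eqref{rec_a} (equivalently \eqref{rec_a_diff}) in $s^N$ and collecting the $\dot a^N$-terms yields, for $0<n<N$, the tridiagonal system
\begin{equation*}
  \Big(\tfrac{n+1}{N}+s^N\tfrac{N-n}{N}+u^N\Big)\dot a^N_n
  -\Big(\tfrac{n+1}{N}+u^N\nu_1\Big)\dot a^N_{n+1}
  -s^N\tfrac{N-n}{N}\,\dot a^N_{n-1}
  =\tfrac{N-n}{N}\,(a^N_{n-1}-a^N_n),
\end{equation*}
with homogeneous Dirichlet conditions at $n=0$ and $n=N$.

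The right-hand side is strictly positive on $0<n<N$: the factor $\tfrac{N-n}{N}$ is positive there, and $a^N_{n-1}-a^N_n=w^N_n=\P(L^N_\infty=n)>0$ since $L^N$ is irreducible on $[N]$ and hence its stationary distribution has full support. The remaining task is to deduce $\dot a^N_n>0$ from this strictly positive source, and here I would invoke the \emph{M-matrix} structure of the operator on the left. Writing it as $(\mathcal L v)_n=D_n v_n-U_n v_{n+1}-L_n v_{n-1}$ with $D_n=\tfrac{n+1}{N}+s^N\tfrac{N-n}{N}+u^N$, $U_n=\tfrac{n+1}{N}+u^N\nu_1$ and $L_n=s^N\tfrac{N-n}{N}$, the diagonal is positive, the off-diagonals are nonpositive, and the strict diagonal dominance
\begin{equation*}
  D_n-U_n-L_n=u^N-u^N\nu_1=u^N\nu_0>0
\end{equation*}
holds for every interior $n$ (the inequality is only reinforced in the two boundary-adjacent rows, where one off-diagonal term is absent). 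Consequently the $(N-1)\times(N-1)$ interior matrix is a nonsingular M-matrix, so its inverse is entrywise nonnegative; since no row of that inverse can vanish, pairing each (nonnegative, nonzero) row with the strictly positive source vector gives $\dot a^N_n>0$ for all $0<n<N$, which is the asserted strict monotonicity in $s^N$.

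The diffusion case runs along the same lines starting from \eqref{recursion}: differentiating in $\sigma$ gives $(n+1+\sigma+\theta)\dot\alpha_n-(n+1+\theta\nu_1)\dot\alpha_{n+1}-\sigma\dot\alpha_{n-1}=\alpha_{n-1}-\alpha_n=\omega_n>0$, and the same computation yields the diagonal-dominance excess $\theta-\theta\nu_1=\theta\nu_0>0$. The deterministic case is immediate from the closed form \eqref{geo}--\eqref{eq_p}: since $a_n=p^n$ with $p\in[0,1)$, monotonicity of $a_n$ in $s$ reduces to $\partial_s p>0$, which follows by implicit differentiation of the quadratic $u\nu_1 p^2-(u+s)p+s=0$ (for $\nu_1>0$), using that $p$ is its smaller root, so that $\partial_p(u\nu_1 p^2-(u+s)p+s)<0$ while $\partial_s(u\nu_1 p^2-(u+s)p+s)=1-p>0$; the case $\nu_1=0$ with $p=s/(u+s)$ is a one-line computation.

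The main obstacle is the positivity step, and in the diffusion limit it carries an extra wrinkle: the system is infinite, so the finite M-matrix argument (entrywise nonnegative inverse) does not apply verbatim, and one must supply a discrete maximum principle compatible with the boundary condition $\alpha_n\to 0$ at infinity. I would handle this by truncating the recursion at a level $M$ (imposing $\dot\alpha_M=0$), applying the finite M-matrix argument to get $\dot\alpha_n>0$ for $n<M$, and then letting $M\to\infty$, using the decay $\alpha_n\to 0$ to control the truncation; strictness survives because the source $\omega_n$ is strictly positive for each fixed $n$. A secondary point worth flagging is that one cannot simply pass to the limit in the finite-$N$ result, since strict inequalities may degrade to weak ones under $N\to\infty$, which is precisely why the diffusion and deterministic cases must be argued directly.
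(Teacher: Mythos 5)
Your route is genuinely different from the paper's. The paper proves all three statements by a single probabilistic argument: it builds an explicit monotone coupling $(\overline L^{N,1},\overline L^{N,2})$ of two copies of the line-counting process with branching rates $s^{N,1}<s^{N,2}$, checks that the ordered set $\{m\ge n\}$ is invariant and that the coupled chain is irreducible and positive recurrent on it, and reads off \emph{strict} stochastic dominance of the stationary laws, hence $a^{N,2}_n>a^{N,1}_n$; no differentiability enters anywhere. Your analytic route (differentiate the recursion, then apply a discrete maximum principle) is sound in the finite case: the linearised system is a strictly row-diagonally-dominant tridiagonal Z-matrix with positive diagonal (excess $u^N\nu_0>0$ in every row), hence a nonsingular M-matrix with entrywise nonnegative inverse, and pairing any (necessarily nonzero) row of the inverse with the strictly positive source gives $\dot a^N_n>0$. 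One small caveat: your justification of the strict positivity of the source, $a^N_{n-1}-a^N_n=w^N_n>0$ ``since $L^N$ is irreducible on $[N]$'', fails at $s^N=0$, where $L^N$ is absorbed at $1$ and $w^N_n=\delta_{1,n}$; this is harmless because positivity of the derivative on the open half-line together with continuity still yields strict monotonicity on $[0,\infty)$, but it should be said. The deterministic case via $a_n=p^n$ and implicit differentiation of the quadratic is fine and is essentially what the paper's Proposition~\ref{prop:derivatives_det} records.

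The diffusion case, however, contains a genuine unclosed gap, and you have correctly identified where it sits. Your truncation scheme requires three things that are asserted rather than proved: (i) that $\sigma\mapsto\alpha_n$ is differentiable at all; (ii) that $\dot\alpha_n$ is bounded, or tends to $0$, as $n\to\infty$, so that a maximum principle on the infinite system applies; and (iii) that the solutions of the truncated Dirichlet problems actually converge to $\dot\alpha_n$ rather than to some other bounded solution of the differentiated recursion. None of these follows from the decay $\alpha_n\to0$ alone. (By contrast, the part you flag as delicate --- preserving strictness in the limit --- is the easy step: once $\dot\alpha_k\ge0$ for all $k$ is known, the differentiated recursion gives $(n+1+\sigma+\theta)\dot\alpha_n\ge\omega_n>0$ directly.) A clean way to close the gap within your framework is to use the identity of Lemma~\ref{lem:alphabeta}, which expresses $\alpha_n$ through the moments $\beta_n$ of Wright's distribution \eqref{Wright_distrib}; these are explicit analytic functions of $\sigma$ with $\beta_n'=\beta_1\beta_n-\beta_{n+1}$, whence $\alpha_n'=\alpha_{n-1}-K\alpha_n$ as in Lemma~\ref{lem:alpha'beta'}, and positivity can then be checked directly. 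Alternatively, the paper's coupling argument transfers verbatim to the processes $\cL$ and $L$ and settles all three cases uniformly without ever touching differentiability --- which is precisely what it buys over the analytic approach.
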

\begin{proof}
	We start with the finite case. To this end, we consider two copies of $L^N$, namely $L^{N,1}$ and $L^{N,2}$ with branching rates $s^{N,1}$ and $s^{N,2} > s^{N,1}$, respectively, and couple them on the same probability space. More precisely, we consider the process $(\overline L^{N,1},\overline L^{N,2}) $ on $[N] \times [N]$ with transitions
	\begin{align*}
		&(n,m) \to (n+1,m+1) & \text{at rate} &&& s^{N,1} \min \Big \{n \frac{N-n}{N}, m \frac{N-m}{N} \Big \}, \\
		&(n,m) \to (n,m+1) & \text{at rate} &&& s^{N,1} \Big ( m \frac{N-m}{N} - n \frac{N-n}{N} \Big ) \one_{\{ m \frac{N-m}{N} > n \frac{N-n}{N}\}} \\
		&&&&&\quad + (s^{N,2}-s^{N,1}) m \frac{N-m}{N}, \\
		&(n,m) \to (n+1,m) & \text{at rate} &&& s^{N,1} \Big ( n \frac{N-n}{N} - m \frac{N-m}{N} \Big ) \one_{\{n \frac{N-n}{N} > m \frac{N-m}{N}\}}, \\ 
		&(n,m) \to (n-1,m-1) & \text{at rate} &&& \min \Big \{ n \frac{n-1}{N}+ u^N \nu_1 (n-1), m \frac{m-1}{N}+ u^N \nu_1 (m-1) \Big \}, \\ 
		&(n,m) \to (n-1,m) & \text{at rate} &&& \Big (n \frac{n-1}{N}+ u^N \nu_1 (n-1) \Big ) \one_{\{n>m\}}, \\ 
		&(n,m) \to (n,m-1) & \text{at rate} &&& \Big (m \frac{m-1}{N}+ u^N \nu_1 (m-1) \Big ) \one_{\{m>n\}}, \\ 
		&(n,m) \to (j,j) & \text{at rate} &&& u^N \nu_0 \one_{\{0<j<\min\{n,m\}\}}, \\ 
		&(n,m) \to (n,j) & \text{at rate} &&& u^N \nu_0 \one_{\{n\leq j < m\}}, \\ 
		&(n,m) \to (j,n) & \text{at rate} &&& u^N \nu_0 \one_{\{m\leq j < n\}}
	\end{align*}
	for all $n,m \in [N]$; no other transitions are possible (with probability 1). Note that target states outside $[N]\times[N]$ are included in the list, but are attained at rate 0, so may be ignored. Note also that the transitions are not mutually exclusive: for example, $(n,n) \to (n-1,n-1)$ may either happen due to coalescence or deleterious mutation, or due to beneficial mutation; so the total rate is $n \frac{n-1}{N}+ u^N \nu^{}_1 (n-1) + u^N \nu^{}_0$. We have chosen this presentation of the rates to avoid somewhat technical case distinctions. 
	
	It is easily verified that the marginal rates corresponding to $\overline L^{N,1}$ and $\overline L^{N,2}$, respectively, equal those of $L^{N,1}$ and $L^{N,2}$, so $(\overline L^{N,1},\overline L^{N,2})$ is indeed a coupling of $L^{N,1}$ and $L^{N,2}$. There are no transitions $(n,m) \to (k,j)$ for $m \geq n$ and $j < k$, so $S \defeq \{ (n,m) : n,m \in [N], m \geq n\}$ is an invariant set for $(\overline L^{N,1},\overline L^{N,2}) $; hence the coupling is monotone. Furthermore, the transitions $(n,m) \to (1,1)$, $(n,m) \to (n+1,m+1)$, and $(n,m) \to (n,m+1)$ occur at positive rates for all $n,m \in [N]$, so $(\overline L^{N,1},\overline L^{N,2}) $ is irreducible and positive recurrent on $S$. It therefore has a stationary distribution that assigns positive weight to every state in $S$. Let $\overline L^{N,1}_\infty,\overline L^{N,2}_\infty$ be random variables on $S$ that have this stationary distribution. They obviously satisfy $\P(\overline L^{N,2}_\infty \geq \overline L^{N,1}_\infty)=1$ and $\P(\overline L^{N,2}_\infty> \overline L^{N,1}_\infty)>0$. We may thus conclude that, for $n \in [N]_0$,
	\begin{equation*}
		\begin{split}
			 a^{N,2}_n \defeq \P(L_\infty^{N,2}>n) &= \P(\overline L^{N,2}_\infty>n) > \P(\overline L^{N,2}_\infty \geq \overline L^{N,1}_\infty>n) \\
			 &= \P(\overline L^{N,1}_\infty>n) = \P(L_\infty^{N,1}>n) \defeq a^{N,1}_n, 
		\end{split}
	\end{equation*}
	which proves the claim in the finite setting. In both the diffusion and the deterministic limits, the proofs are analogous with obvious modifications. (In the deterministic limit, one may alternatively invoke the fact that $a^{}_n =p^n$ and $p$ is strictly increasing in $s$, see Proposition~\ref{prop:derivatives_det} below.)
\end{proof}

\subsection{The sampling probabilities}
We continue with the sampling probabilities, whose behaviour is unsurprising, as we have already seen in Figure~\ref{fig:anc_dist}. We prove it here for convenience and lack of reference.
\begin{lemma}\label{lem:samplingprobs}
	The sampling probabilities at stationarity, that is $b^N_n$ (for $n \in [N]$), $\beta^{}_n$ (for $n \in \N$), and $b^{}_n$ (for $n \in \N$), are strictly decreasing in $s^N, \sigma$, and $s$, respectively.
\end{lemma}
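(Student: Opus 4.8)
The plan is to read each sampling probability as the expectation of a \emph{monotone} observable and then reduce the statement to stochastic monotonicity of the underlying stationary law in the selection parameter. Concretely, by \eqref{b^N_n} we have $b^N_n = \E[\phi_n(Y^N_\infty)]$ with $\phi_n(k) \defeq k^{\underline n}/N^{\underline n}$, the probability that an $n$-sample drawn without replacement is entirely of type $1$. The map $\phi_n$ is non-decreasing in $k$ and strictly increasing on $\{n-1,n,\dots,N\}$ (it jumps from $\phi_n(n-1)=0$ to $\phi_n(n)=n!/N^{\underline n}>0$). Hence it suffices to show that $Y^N_\infty$ is strictly stochastically decreasing in $s^N$. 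The same reduction applies in the limits: $\beta_n = \E(\cY^n_\infty)$ and $\eta \mapsto \eta^n$ is strictly increasing on $(0,1)$, so the diffusion case reduces to stochastic monotonicity of $\cY_\infty$ in $\sigma$; and $b_n = y_\infty^n$ reduces the deterministic case to monotonicity of $y_\infty$ in $s$.

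For the finite case I would argue exactly as in Proposition~\ref{prop:tailprobs}. By \eqref{rates_Y} the up-rate $q^{}_{Y^N}(k,k+1)$ does not depend on $s^N$, while the down-rate $q^{}_{Y^N}(k,k-1)$ is strictly increasing in $s^N$. Fixing $s^{N,2}>s^{N,1}$, I would couple the two birth--death chains by a process $(\overline Y^{N,2},\overline Y^{N,1})$ that keeps $\overline Y^{N,2}\le\overline Y^{N,1}$: when the copies agree, couple their (identical) up-jumps perfectly and let the lower copy die whenever the upper one does, with an extra death of the lower copy alone at rate $(s^{N,2}-s^{N,1})k(N-k)/N$; when they disagree, let them move independently. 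Since $Y^N$ is a nearest-neighbour chain, a single transition can only make the copies meet, never cross, and simultaneous jumps have probability zero, so $\{(k_2,k_1):k_2\le k_1\}$ is invariant, with irreducibility and positive recurrence there following as before. Alternatively, and perhaps more transparently for the reversible chain $Y^N$, one avoids the coupling entirely: by \eqref{stationarydistributionfinite} the likelihood ratio $\pi^{N,2}_k/\pi^{N,1}_k$ is proportional to $\prod_{j=1}^{k-1} q^{}_{Y^{N,1}}(j+1,j)/q^{}_{Y^{N,2}}(j+1,j)$, a product of factors that are $\le 1$ and strictly $<1$ for $1\le j\le N-2$; thus the ratio is non-increasing and eventually strictly decreasing in $k$, i.e.\ $\pi^{N,1}$ dominates $\pi^{N,2}$ in the monotone-likelihood-ratio order, which entails strict stochastic dominance.

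Either route yields $\P(Y^{N,2}_\infty\le Y^{N,1}_\infty)=1$ together with a strict gap on a set of positive probability: for instance the coupled stationary law charges the state $(n-1,n)$, where $\phi_n$ takes the distinct values $0$ and $n!/N^{\underline n}$. Integrating the monotone $\phi_n$ against the two laws then gives $b^{N,2}_n<b^{N,1}_n$. For the diffusion limit the cleanest implementation uses Wright's density \eqref{Wright_distrib}: the likelihood ratio of the stationary laws at $\sigma_2>\sigma_1$ is proportional to $\ee^{-(\sigma_2-\sigma_1)\eta}$, strictly decreasing in $\eta$, so $\cY_\infty$ is strictly stochastically decreasing in $\sigma$ and hence $\beta_n=\E(\cY^n_\infty)$ is strictly decreasing (a monotone coupling of the diffusions $\cY$ would serve equally well). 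For the deterministic limit, $b_n=y_\infty^n$ and $y_\infty$ of \eqref{widetilde_y} is strictly decreasing in $s$ (by direct differentiation, or because $p$ of \eqref{eq_p} is, cf.\ Proposition~\ref{prop:derivatives_det}). The main obstacle is not the inequality but its strictness: the coupling (or the non-increasing likelihood ratio) delivers ``$\le$'' at once, whereas excluding equality requires both that the two stationary laws genuinely separate and that $\phi_n$ (respectively $\eta^n$) be non-constant on the part of the common support where they differ; this is precisely where positive recurrence of the coupling, or the strictness of the monotone-likelihood-ratio comparison, enters.
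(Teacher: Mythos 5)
Your argument is correct, but it follows a genuinely different route from the paper's. The paper proves the finite case through the \emph{killed ASG}: by Corollary~\ref{coro:b_n}, $b^N_n$ is the probability that the line-counting process $R^N$ started from $n$ absorbs in $0$ rather than in $\Delta$, and a monotone coupling of two copies of $R^N$ with branching rates $s^{N,1}<s^{N,2}$ shows that the copy with more branching carries more lines and is therefore killed with larger probability; the two limits are handled by the same coupling with obvious modifications. You instead stay entirely in the forward picture: reading $b^N_n=\E[\phi_n(Y^N_\infty)]$ off \eqref{b^N_n} with the non-decreasing sampling function $\phi_n(k)=k^{\underline n}/N^{\underline n}$, you reduce the claim to strict stochastic monotonicity of the stationary type-$1$ frequency in the selection parameter, obtained either from a monotone coupling of the birth--death chains (valid, since by \eqref{rates_Y} the up-rates are independent of $s^N$, the down-rates strictly increasing in it, and nearest-neighbour chains cannot cross) or from a monotone-likelihood-ratio comparison of the explicit reversible laws \eqref{stationarydistributionfinite}; in the diffusion limit the likelihood ratio of Wright's densities is proportional to $\ee^{-(\sigma_2-\sigma_1)\eta}$ and the argument is essentially one line. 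Your route buys self-containedness (no killed-ASG machinery from the appendix) and a particularly clean diffusion case; the paper's route buys uniformity with the proof of Proposition~\ref{prop:tailprobs} and independence from the explicit form of the stationary law, using only the genealogical representation. Two small points. First, your treatment of strictness is the right idea but should be pinned down for every $n$; e.g.\ for $n=N$ one needs $\P(Y^{N,2}_\infty=N)<\P(Y^{N,1}_\infty=N)$, which your coupling does deliver because the coupled stationary law charges $(N-1,N)$ and the two copies never cross. Second, the parenthetical in the deterministic case is off: Proposition~\ref{prop:derivatives_det} shows that $p$ is strictly \emph{increasing} in $s$, and since $y_\infty=u\nu_1 p/s$ (both quantities are roots of quadratics related by $z\mapsto s z/(u\nu_1)$), the monotonicity of $y_\infty$ does not follow from that of $p$; the direct differentiation of \eqref{widetilde_y} that you also invoke is the argument that actually works, and it does give $y_\infty'<0$.
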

\begin{proof}
	We start with the finite case, work with the line-counting process $R^N$ of the killed ASG as introduced in Appendix~A (Section~\ref{sec:app_A}), and use the representation of $b_n^N$ as the absorption probability of $R^N$ as presented in Corollary~\ref{coro:b_n}. In close analogy with the proof of Proposition \ref{prop:tailprobs}, we consider two copies of $R^N$, namely $R^{N,1}$ and $R^{N,2}$ with $R^{N,1}_0=R^{N,2}_0=n \in [N]$ and branching rates $s^{N,1}$ and $s^{N,2} > s^{N,1}$, respectively, and couple them on the same probability space. Because of the similarity with the previous proof, we do not spell out all the transitions; suffice it to say that one obtains a monotone coupling $(\overline R^{N,1},\overline R^{N,2}) $ on $[N]_{0,\Delta} \times [N]_{0,\Delta}$, where we identify $\Delta$ with $N+1$. The marginals equal $R^{N,1}$ and $R^{N,2}$ in distribution. Since $q_{R^{N,2}}(n,n+1) > q_{R^{N,1}}(n,n+1)$ for every $n \in [N-1]$, it is clear that, while $\P(\overline R_t^{N,1} \leq \overline R_t^{N,2} )=1$ for all $t \geq 0$ by the monotonicity of the coupling, one also has $\P(\overline R_t^{N,1} <\overline R_t^{N,2} ) > 0$ for all $t>0$. Let $\tau$ be the (random) time of absorption of $\overline R^{N,2}$. If $\overline R^{N,2}_\tau=0$, we also have $\overline R^{N,1}_\tau=0$. If $\overline R^{N,2}_\tau=\Delta$ (which happens with positive probability), we have $\P(\overline R^{N,1}_\tau<\Delta)>0$; this entails an additional chance for $\overline R^{N,1}$ to absorb in 0. More precisely, we altogether have, with $R^{N,i}_\infty \defeq \lim_{r \to \infty} R^{N,i}_r$ for $i \in \{1,2\}$ and likewise for $\overline R^{N,i}_\infty$:
	\begin{equation*}
		\begin{split}
			 \P(R^{N,1}_\infty = 0) &= \P(\overline R^{N,1}_\infty = 0) \\
			 &= \P(\overline R^{N,2}_\infty = 0) 
			 + \P(\overline R^{N,2}_\infty = \Delta) \P(\overline R^{N,1}_\infty =0 \mid \overline R^{N,2}_\infty = \Delta) 
			 > \P(R^{N,2}_\infty =0),
		\end{split}
	\end{equation*}
	which shows the claim in the finite case. In both the diffusion and the deterministic limits, the proofs are analogous with obvious modifications. 
\end{proof}

\subsection{The ancestral type distribution}
We now turn to the ancestral type distribution, whose dependence on the selection parameter is illustrated in Figure~\ref{fig:anc_dist}. In what follows, we use the shorthand $x'$ for $\frac{\dd}{\dd s^N} x(s^N)$, $\frac{\dd}{\dd \s} x(\s)$ or $\frac{\dd}{\dd s} x(s)$ for a quantity $x$ depending on \(s^N,\s\) and \(s\), respectively (there will be no risk of confusion) and suppress the explicit parameter dependence.
\begin{prop}\label{prop:ancdist}
	The stationary probabilities for a type-1 ancestor, namely $\P(A^N_\infty=1)$, \allowbreak$\P(\cA_\infty=1)$, and $\P(A_\infty=1)$ from \eqref{anc_unfit}, \eqref{anc_unfit_diff}, and \eqref{anc_unfit_det}, are strictly decreasing in $s^N, \sigma$, and $s$, respectively.
\end{prop}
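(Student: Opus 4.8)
The plan is to show that the derivative of $\P(A^N_\infty=1)$ with respect to the selection parameter is strictly negative, doing the finite case first and then transferring the argument to the two limits. Starting from \eqref{anc_unfit}, note that $a^N_n$, $w^N_n$ and $b^N_n$ are solutions of the linear systems \eqref{rec_a} and \eqref{rec_b}, whose coefficients are polynomials in $s^N$; hence they are rational, in particular smooth, functions of $s^N$. Writing primes for $\tfrac{\dd}{\dd s^N}$ as in the statement, I would differentiate and split
\[
	\Big(\sum_{n=1}^N w^N_n b^N_n\Big)' = \sum_{n=1}^N (w^N_n)'\, b^N_n + \sum_{n=1}^N w^N_n\, (b^N_n)'.
\]
The obstacle that makes the claim non-trivial is that the two factors move in \emph{opposite} directions: by Proposition~\ref{prop:tailprobs} the $a^N_n$ (hence, indirectly, the increments $w^N_n=a^N_{n-1}-a^N_n$, with no definite sign) increase, while by Lemma~\ref{lem:samplingprobs} the $b^N_n$ decrease. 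So no termwise monotonicity is available, and the whole point of the additive split is that each of the two sums can be signed on its own.

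The second sum is immediate: Lemma~\ref{lem:samplingprobs} gives $(b^N_n)'<0$ for each $n$, and $w^N_n>0$, so $\sum_{n=1}^N w^N_n (b^N_n)' <0$ strictly. For the first sum I would use $w^N_n=a^N_{n-1}-a^N_n$, so that $(w^N_n)'=(a^N_{n-1})'-(a^N_n)'$, and apply summation by parts:
\[
	\sum_{n=1}^N \big((a^N_{n-1})'-(a^N_n)'\big) b^N_n = (a^N_0)'\,b^N_1 - (a^N_N)'\,b^N_N + \sum_{n=1}^{N-1} (a^N_n)'\,(b^N_{n+1}-b^N_n).
\]
The boundary conditions $a^N_0=1$ and $a^N_N=0$ hold for all $s^N$, so $(a^N_0)'=(a^N_N)'=0$ and the two boundary terms drop out. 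Proposition~\ref{prop:tailprobs} gives $(a^N_n)'\ge 0$, and $b^N_n=\E\big((Y^N_\infty)^{\underline n}/N^{\underline n}\big)$ is non-increasing in $n$ (pointwise $(Y^N_\infty)^{\underline n}/N^{\underline n}=\prod_{j=0}^{n-1}\frac{Y^N_\infty-j}{N-j}$ is a product of factors in $[0,1]$), whence $b^N_{n+1}-b^N_n\le 0$ and the remaining sum is $\le 0$. Adding the two sums shows the derivative is strictly negative, which proves the finite claim; the remedy for the opposing monotonicities is exactly this decomposition followed by Abel summation, which trades the sign-indefinite $(w^N_n)'$ for the sign-definite $(a^N_n)'$ paired against the monotone-in-$n$ sequence $b^N$.

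The diffusion and deterministic limits then go through verbatim, replacing $(w^N,a^N,b^N)$ by $(\omega,\alpha,\beta)$ and by $(w,a,b)$, using \eqref{anc_unfit_diff} and \eqref{anc_unfit_det} together with the boundary data $\alpha_0=1,\ \lim_n\alpha_n=0$ (respectively $a_0=1,\ \lim_n a_n=0$) and the corresponding parts of Proposition~\ref{prop:tailprobs} and Lemma~\ref{lem:samplingprobs}. The only additional care needed is the interchange of differentiation with the now-infinite sum and the vanishing of the upper boundary term $\lim_n \alpha_n' \beta_n$; I expect both to be controlled by the geometric tail decay of $\alpha_n$ and $\beta_n$, so this is the one place where a little technical bookkeeping (a dominating summable bound) is required rather than a new idea. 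In the deterministic limit one may alternatively bypass all of this by inserting the explicit forms $w_n=p^{n-1}(1-p)$, $b_n=y_\infty^n$ from \eqref{geo} and \eqref{widetilde_y} and differentiating directly, using that $p$ and $y_\infty$ have known monotonicity in $s$.
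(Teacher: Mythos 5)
Your proposal is correct and takes essentially the same route as the paper's proof: the identical differentiation of $\sum_{n=1}^N w^N_n b^N_n$, the same split into two sums, and the same use of Lemma~\ref{lem:samplingprobs} to make the second sum strictly negative. For the first sum, the paper invokes the stochastic monotonicity of $L^N_\infty$ from Proposition~\ref{prop:tailprobs} applied to the decreasing function $n \mapsto k^{\underline{n}}/N^{\underline{n}}$, and your explicit Abel summation against the decreasing sequence $(b^N_n)$ is precisely the standard proof of that same inequality, so the two arguments coincide in substance (your remark that the boundary terms and the interchange of limits in the infinite-sum cases need a word of justification is a fair point that the paper glosses over with ``perfectly analogous'').
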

\begin{proof}
	For the finite case, differentiating $\P(A^N_\infty=1)$ with respect to $s^N$ gives
	\begin{equation}\label{anc_unfit_prime}
		\frac{\dd}{\dd s^N} \P(A^N_\infty=1) = \frac{\dd}{\dd s^N} \sum_{n=1}^N w^{N}_n b^N_n = \sum_{n=1}^N (w^{N}_n)' b^N_n + \sum_{n=1}^N w^{N}_n (b^N_n)'.
	\end{equation}
	From Lemma~\ref{lem:samplingprobs}, we know that $(b^N_n)'<0$ for all $n \in [N]$, so the second sum is negative. Write the first sum as
	\begin{equation*}
		\sum_{n=1}^N (w^{N}_n)' b^N_n = \E[h(Y^N_{\infty})]
	\end{equation*}
	with
	\begin{equation*}
		h(k) \defeq \sum_{n=1}^N (w^{N}_n)' \frac{k^{\underline{n}}}{N^{\underline{n}} }
		= \frac{\dd}{\dd s^N} \Big ( \sum_{n=1}^N w^{N}_n \frac{k^{\underline{n}}}{N^{\underline{n}}} \Big )
		= \frac{\dd}{\dd s^N} \E \Big ( \frac{k^{\underline{L^N_\infty}}}{N^{\underline{L^N_\infty}}} \Big ) < 0,
	\end{equation*}
	where the inequality is due to the stochastic monotonicity of $L^N_\infty$ established in Proposition~\ref{prop:tailprobs}. It follows that the first sum in \eqref{anc_unfit_prime} is negative as well, which establishes the claim in the finite case. The proofs for the diffusion and deterministic limits are perfectly analogous. In the diffusion limit, the result has been shown previously by \citet[Prop. 2.6]{Taylor07} via a different route.
\end{proof}

\subsection{The mutation rates}
A first inspection of \eqref{finite_rates} shows the following. For $s^N=0$, we have $a^N_n=\delta_{0,n}$ and $w^N_n=\delta_{1,n}$, where $\delta$ denotes Kronecker's delta; this entails that
\begin{equation*}
	\frac{\sum_{n=1}^N a^{N}_{n-1} b^N_n}{\sum_{k=1}^N w^{N}_k b^N_k }=1=\frac{\sum_{n=1}^N w^{N}_n (b^N_{n-1}-b^N_n )}{\sum_{k=1}^N a^{N}_{k-1} ( b^N_{k-1}-b^N_k )}
\end{equation*}
and so $q^{}_{A^N}(1,0)=u^N\nu^{N}_0$ and $q^{}_{A^N}(0,1)=u^N\nu^{N}_1$ --- as is to be expected, since in this case the mutation rates on the ancestral line should equal the neutral rates. In contrast, for $s^N > 0$, we have $a^N_n>0$ and $w^N_n>0$ with $a^N_{n-1} = w^N_n + a^N_n > w^N_n$ for all $n>0$, so $q^{}_{A^N}(1,0)> u^N\nu^{N}_0$ and $q^{}_{A^N}(0,1)< u^N\nu^{N}_1$. See Fig.~\ref{fig:mutrates} for a numerical evalution of the dependence of $q^{}_{A^N}(1,0)$ and $q^{}_{A^N}(0,1)$ on $s^N$. Indeed, the figure tells us more than our quick inspection: $q^{}_{A^N}(1,0)$ and $q^{}_{A^N}(0,1)$ are strictly increasing and decreasing, respectively. While we only have the numerical evidence for finite $N$, we now give proofs for our two limits. They will only require results from Sections~\ref{sec_model}--\ref{section_backward}; in particular, they do not rely on the flux identities.
\begin{figure}
	\includegraphics[width=0.6 \textwidth]{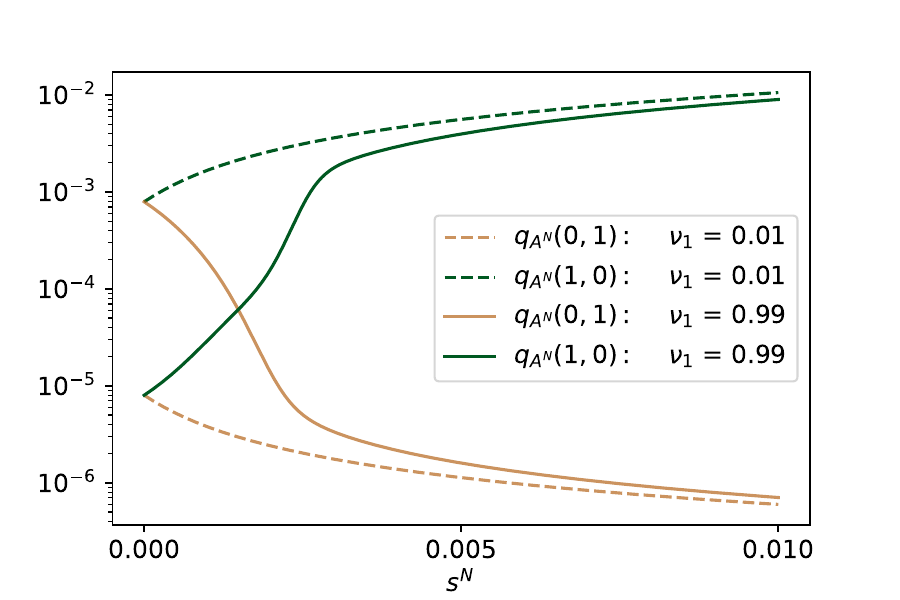}
	\caption{\label{fig:mutrates} Mutation rates, on log scale, in the finite system as functions of $s^N$ for two values of $\nu_1$. Further parameters: $N=10^4, u^N=8 \cdot 10^{-4}$.}
\end{figure}
\begin{theorem}\label{thm:derivatives}
	The mutation rates on the ancestral line in the diffusion limit at stationarity satisfy
	\begin{align}
		\label{mu10'}
		q_{\cA}'(1,0) & = \frac{\theta\nu^{}_0 b_1}{\big ( \sum_{m>0} (\alpha^{}_{m-1}-\alpha^{}_m) \beta_m \big )^2}\sum_{n > 0} \alpha'_n \b_n > 0 ,\\[2mm]
		\label{mu01'}
		q_{\cA}'(0,1) & = - \frac{\theta\nu^{}_1 (\beta_0-\beta_1)}{\big ( \sum_{m>0} \alpha^{}_{m-1} (\beta_{m-1}-\beta_m) \big )^2} \sum_{n > 0} \a'_n (\beta_{n-1}-\beta_n) < 0.
	\end{align} 
\end{theorem}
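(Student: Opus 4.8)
The plan is to differentiate the explicit diffusion-limit rates of \eqref{diffusion_rates}, written as the ratios $q_{\cA}(1,0)=\theta\nu_0\,F/D$ and $q_{\cA}(0,1)=\theta\nu_1\,H/(1-D)$, where $F=\sum_{n>0}\alpha_{n-1}\beta_n$, $D=\sum_{n>0}\omega_n\beta_n=\P(\cA_\infty=1)$ with $\omega_n=\alpha_{n-1}-\alpha_n$, and $H=\sum_{n>0}\omega_n(\beta_{n-1}-\beta_n)$ (so that $1-D=\P(\cA_\infty=0)$ is exactly the denominator in \eqref{diffusion_rates}), and to apply the quotient rule. The whole difficulty is to evaluate the $\sigma$-derivatives of these sums in closed form; because both $\alpha_n$ and $\beta_n$ depend on $\sigma$, I have to control $\alpha_n'$ and $\beta_n'$ simultaneously.

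The first step I would take is to eliminate $\beta_n'$ entirely. Differentiating Wright's density \eqref{Wright_distrib} in $\sigma$ gives $\partial_\sigma\log\pi(\eta)=\beta_1-\eta$ (the $\beta_1$ stemming from the normalising constant $C$), whence $\partial_\sigma\pi=(\beta_1-\eta)\pi$ and, since $\beta_n=\E(\cY_\infty^n)$,
\[
  \beta_n' = \beta_1\beta_n-\beta_{n+1}, \qquad n\geq 0 .
\]
This closed form (note $\beta_1=b_1$) lets me rewrite every sum containing $\beta_n'$ in terms of the $\alpha$'s and $\beta$'s alone. The second step handles $\alpha_n'$: differentiating the tail recursion \eqref{recursion} turns it into the \emph{same} tridiagonal problem, now inhomogeneous, with right-hand side $\omega_n$ and boundary value $\alpha_0'=0$.

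The third step is a discrete summation by parts (Green's identity) for the tridiagonal operator $L$ of \eqref{recursion}, exploiting that $\beta_n$ solves the companion recursion \eqref{rec_beta}, which is essentially the formal adjoint $L^\ast$. The clean fact I would use is that the shifted moment sequence is a generalised eigenfunction of the adjoint, $(L^\ast\beta_{\cdot+1})_n=\beta_{n+1}$, so pairing the $\alpha_n'$-recursion against $(\beta_{n+1})$ collapses all the $n$-weighted terms and leaves only a boundary contribution proportional to $(1+\theta\nu_1)\alpha_1'\beta_1$. Carrying this out yields closed expressions for the derivatives of the building blocks, for instance
\[
   D' = \beta_1 D-\sum_{n>0}\alpha_n'\beta_n-(1+\theta\nu_1)\alpha_1'\beta_1 ,
\]
together with the analogous formula $F'=(\beta_1-1)F+\beta_1-(1+\theta\nu_1)\alpha_1'\beta_1$. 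Substituting these into the quotient rule makes the $FD$-products cancel, and the remaining terms should collapse to the single sum $\sum_{n>0}\alpha_n'\beta_n$, producing \eqref{mu10'}; the identity \eqref{mu01'} follows the same route from $q_{\cA}(0,1)$, with the flux balance $\theta\nu_0F=\theta\nu_1H$ used to reconcile normalisations and the surviving sum being $\sum_{n>0}\alpha_n'(\beta_{n-1}-\beta_n)$.

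The signs are then immediate: $\alpha_n'>0$ for all $n$ by Proposition~\ref{prop:tailprobs} (the tail probabilities increase in $\sigma$), while $\beta_n=\E(\cY_\infty^n)$ is strictly decreasing in $n$ since $\cY_\infty\in(0,1)$, so $\beta_1>0$ and $\beta_{n-1}-\beta_n>0$; with the squared denominators this gives $q_{\cA}'(1,0)>0$ and $q_{\cA}'(0,1)<0$. The hard part will be the bookkeeping in the third step: one must track every boundary term and show that the bilinear combination left over from the quotient rule, namely $(F-\beta_1)\bigl(D-\sum_{n>0}\alpha_n'\beta_n\bigr)$, reduces \emph{exactly} to $(1+\theta\nu_1)\alpha_1'\beta_1\sum_{n>0}\alpha_n\beta_n$. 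In other words, the genuine obstacle is proving that the products of sums arising from differentiating a ratio telescope down to a single sum, and this is precisely where the two recursions \eqref{recursion}, \eqref{rec_beta} and the eigenfunction relation must be combined carefully.
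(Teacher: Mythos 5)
Your first two steps are correct and essentially coincide with the paper's: the formula $\beta_n'=\beta_1\beta_n-\beta_{n+1}$ from differentiating Wright's density is exactly what Lemma~\ref{lem:alpha'beta'} proves (via $\beta_n=d_n/d_0$, $d_n'=-d_{n+1}$), your adjoint relation $(L^\ast\beta_{\cdot+1})_n=\beta_{n+1}$ is true, the boundary term from the pairing is indeed $(1+\theta\nu_1)\alpha_1'\beta_1$, and I have checked that your intermediate expressions $F'=(\beta_1-1)F+\beta_1-(1+\theta\nu_1)\alpha_1'\beta_1$ and $D'=\beta_1 D-\sum_{n>0}\alpha_n'\beta_n-(1+\theta\nu_1)\alpha_1'\beta_1$ are both correct, as is your identification of the leftover identity $(F-\beta_1)\bigl(D-\sum_{n>0}\alpha_n'\beta_n\bigr)=(1+\theta\nu_1)\alpha_1'\beta_1\sum_{n>0}\alpha_n\beta_n$. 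The sign argument at the end is also fine and matches the paper.

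The genuine gap is in step three: the machinery you propose cannot prove that leftover identity. Differentiating \eqref{recursion} gives $L\alpha'=\omega$, and pairing against $\beta_{\cdot+1}$ yields exactly \emph{one} scalar relation, linking $\sum_{n>0}\alpha_n'\beta_{n+1}$ to known quantities and to $\alpha_1'$; pairing against any other test sequence (e.g.\ $\beta_\cdot$, for which $(L^\ast\beta)_n=2\beta_n-\beta_{n-1}$) introduces a further unknown sum $\sum_{n>0}\alpha_n'\beta_{n-1}$, so the hierarchy never closes, and $\sum_{n>0}\alpha_n'\beta_n$ and $\alpha_1'$ remain two independent unknowns constrained by a single equation. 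What is actually needed is a \emph{pointwise} relation between $\alpha_n'$ and the $\alpha$'s, and this is precisely the ingredient the paper supplies: Lemma~\ref{lem:alpha'beta'} gives $\alpha_n'=\alpha_{n-1}-K\alpha_n$ with $K=\frac{1}{\sigma}[(1+\theta\nu_1)(1-\alpha_1)+\sigma+\theta\nu_0]$, derived from the nontrivial closed form $\alpha_n=\sigma^n(\beta_{n+1}-\beta_{n+2})/\bigl(\prod_{j=1}^n(j+\theta\nu_1)\,(\beta_1-\beta_2)\bigr)$ of Lemma~\ref{lem:alphabeta}. With it, $D-\sum_{n>0}\alpha_n'\beta_n=(K-1)\sum_{n>0}\alpha_n\beta_n$ and $(K-1)(F-\beta_1)=(1+\theta\nu_1)\alpha_1'\beta_1$, so your identity — and the whole theorem — drops out by substitution. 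If you want to stay within your framework, you can recover the missing lemma cheaply: $x_n=\alpha_{n-1}-K\alpha_n$ solves $Lx=\omega$ for \emph{every} constant $K$ (since $L\alpha=0$) and decays at infinity, the $n=1$ equation (where $\alpha_0'=0$ enters) pins down the stated value of $K$, and uniqueness of the solution does the rest; without some such pointwise input, step three does not go through.
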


The proof requires two lemmas.

\begin{lemma}\label{lem:alphabeta}
	The tail probabilities of $L_\infty$ and the sampling probabilities are connected via
	\begin{equation*}
		\alpha^{}_n = \frac{\sigma^n}{\prod_{j=1}^n (j+\theta \nu^{}_1)} \frac{\beta_{n+1}-\beta_{n+2}}{\beta_1-\beta_2}, \quad n>0.
	\end{equation*}
\end{lemma}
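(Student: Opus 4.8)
The plan is to recognise that both sides of the claimed identity solve the same second-order linear recurrence up to an explicit prefactor, and then to invoke the uniqueness already recorded for \eqref{recursion}. Concretely, I would set $d_m \defeq \beta_m - \beta_{m+1}$, so that $d_m = \E\big(\cY_\infty^m(1-\cY_\infty)\big) \in [0,1]$ and, in particular, $d_1 = \beta_1 - \beta_2 > 0$ (since $\cY_\infty$ has a density on $(0,1)$), which makes the right-hand side well defined. Writing $P_n \defeq \prod_{j=1}^n (j+\theta\nu_1)$ with $P_0 = 1$, the assertion becomes $\alpha_n = c_n$ for $n>0$, where $c_n \defeq (\sigma^n/P_n)\,(d_{n+1}/d_1)$.

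First I would check the boundary behaviour of $(c_n)_{n\geq 0}$. The empty product gives $c_0 = 1$, matching $\alpha_0 = 1$; and since $d_{n+1}\in[0,1]$ while $\sigma^n/P_n \to 0$ (the denominator grows like $n!$), one gets $\lim_{n\to\infty} c_n = 0$. These are exactly the two boundary conditions attached to \eqref{recursion}.

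The heart of the argument is to verify that $(c_n)$ satisfies \eqref{recursion} itself. Substituting $c_{n-1}, c_n, c_{n+1}$ into \eqref{recursion} and cancelling the common factor $\sigma^n/(P_n d_1)$ (using $P_{n+1}=P_n(n+1+\theta\nu_1)$ and $P_{n-1}=P_n/(n+\theta\nu_1)$) reduces the claim to the difference recurrence
\[
  (n+1+\sigma+\theta)\,d_{n+1} = \sigma\, d_{n+2} + (n+\theta\nu_1)\,d_n, \qquad n \geq 0.
\]
I would establish this directly from the sampling recursion \eqref{rec_beta}: evaluating \eqref{rec_beta} at index $n+2$ yields $\sigma\beta_{n+3} = (n+1+\sigma+\theta)\beta_{n+2} - (n+1+\theta\nu_1)\beta_{n+1}$, and at index $n+1$ it yields $(n+\theta\nu_1)\beta_n = (n+\sigma+\theta)\beta_{n+1} - \sigma\beta_{n+2}$. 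Inserting these into the $\beta$-expansion of the displayed identity, the coefficients of $\beta_{n+1}$ and of $\beta_{n+2}$ each collapse to $0$, leaving nothing. This elementary but index-sensitive cancellation is where the real work lies, and it is the step most prone to bookkeeping errors; everything else is routine.

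With the recurrence and both boundary conditions verified, $(c_n)_{n\geq 0}$ and $(\alpha_n)_{n\geq 0}$ are two solutions of \eqref{recursion} sharing $c_0 = \alpha_0 = 1$ and a vanishing limit, so by the uniqueness stated for \eqref{recursion} in Section~\ref{sec:difflimit} they coincide, giving the lemma for all $n>0$. I expect the main obstacle to be purely the algebraic reduction to the $d$-recurrence; once that cancellation is in hand, the uniqueness argument closes the proof immediately.
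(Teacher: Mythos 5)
Your proposal is correct and follows essentially the same route as the paper: define the candidate sequence $c_n=\frac{\sigma^n}{\prod_{j=1}^n(j+\theta\nu_1)}\frac{\beta_{n+1}-\beta_{n+2}}{\beta_1-\beta_2}$, check the boundary conditions $c_0=1$ and $c_n\to 0$, verify via the sampling recursion \eqref{rec_beta} (applied at indices $n+2$ and $n+1$) that $(c_n)$ satisfies \eqref{recursion}, and conclude by the uniqueness of the solution. Your reduction to the difference recurrence $(n+1+\sigma+\theta)d_{n+1}=\sigma d_{n+2}+(n+\theta\nu_1)d_n$ with $d_m=\beta_m-\beta_{m+1}$ is exactly the cancellation the paper carries out inline, so the two arguments coincide.
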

Note that the lemma connects two sets of coefficients that characterise the present and the past. It is thus interesting on its own. The proof we give below is a simple verification. However, there is a deep probabilistic meaning behind the lemma, which is related to pairs of birth-death processes with catastrophes, the connection between the stationary distribution of one of the processes with the absorption probabilities of the other, and a generalised detailed-balance equation connecting the two. This will be treated in a forthcoming paper. 
\begin{proof}
	Set $\bar{\alpha}_0=1$ and
	\begin{equation*}
		\bar{\alpha}_n\defeq\frac{\sigma^n(\beta^{}_{n+1}-\beta^{}_{n+2})}{F_n(\beta^{}_1-\beta^{}_2)},\quad n>0,
	\end{equation*}
	where $F_n\defeq \prod^n_{j=1} (j+\vartheta\nu^{}_1)$. Since $\alpha\defeq(\alpha^{}_n)_{n\geq 0}$ is the unique solution to the system of equations \eqref{recursion} with boundary conditions $\alpha_0=1$ and $\lim_{n\to\infty}^{}\alpha_n=0$, it suffices to show that $\bar \a \defeq (\bar{\alpha}_n)_{n\geq 0}$ solves the same system of equations and boundary conditions. By definition $\bar{\alpha}^{}_0=1$. Moreover, since $\lim_{n\to\infty}\beta_n=0$ and $\lim_{n\to\infty}\sigma^n/F_n=0$, we deduce that $\lim_{n\to\infty}\bar{\alpha}_n=0$. It remains to show that $\bar{\alpha}$ satisfies \eqref{recursion}.
	
	Let $n>0$. Using the definition of $\bar{\alpha}_n$ and \eqref{rec_beta} first for $n+2$ and then for $n+1$, we obtain
	\begin{align*}
		(n+1+\s+\t)\bar{\alpha}_n &= (n+1+\s+\t)\frac{\s^n(\beta_{n+1}-\beta_{n+2})}{F_n(\beta_1-\beta_2)}\\
		&= \frac{\s^n}{F_n(\beta_1-\beta_2)}\left[(n+1+\s+\t)\beta_{n+1}-(\s \beta_{n+3}+(n+1+\tt)\beta_{n+1})\right]\\
		&= \frac{\s^n}{F_n(\beta_1-\beta_2)}\left[(n+\s+\t)\beta_{n+1}-(\s \beta_{n+3}+(n+\tt)\beta_{n+1})\right]\\
		&= \frac{\s^n}{F_n(\beta_1-\beta_2)}\left[\s \beta_{n+2}+(n+\tt)\beta_n-(\s \beta_{n+3}+(n+\tt)\beta_{n+1})\right]\\
		&= \frac{\s^n}{F_n(\beta_1-\beta_2)}\left[(n+\tt)(\beta_n-\beta_{n+1})+\s (\beta_{n+2}-\beta_{n+3})\right]\\
		&=\sigma\frac{\s^{n-1}(\beta_{n}-\beta_{n+1})}{F_{n-1}(\beta_1-\beta_2)}+(n+1+\tt)\frac{\s^{n+1}(\beta_{n+2}-\beta_{n+3})}{F_{n+1}(\beta_1-\beta_2)}\\
		&=\sigma \bar{\alpha}_{n-1}+(n+1+\tt)\bar{\alpha}_{n+1}.
	\end{align*}
	This ends the proof.
\end{proof}

\begin{lemma}\label{lem:alpha'beta'}
	The derivatives of the tail and sampling probabilities with respect to $\sigma$ read
	\begin{align}
		\alpha'_n & = \alpha^{}_{n-1} - K \alpha^{}_n \quad \text{and} \label{alpha'} \\
		\beta'_n & = \beta_1 \beta_n - \beta_{n+1} \label{beta'}
	\end{align}
	for $n>0$, where $K \defeq \frac{1}{\sigma} [(1+\theta \nu_1) (\alpha^{}_0 - \alpha^{}_1) + \sigma + \theta \nu^{}_0 ]$.
\end{lemma}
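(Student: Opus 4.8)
The plan is to treat \eqref{beta'} and \eqref{alpha'} separately: the sampling probabilities $\beta_n$ carry an integral representation that makes their $\sigma$-derivative transparent, whereas the tail probabilities $\alpha_n$ are only characterised by the recursion \eqref{recursion}, so for them I would differentiate the recursion and verify a candidate solution. For \eqref{beta'} I would start from the moment representation $\beta_n = \int_0^1 \eta^n \pi(\eta)\dd\eta$ with Wright's density $\pi$ of \eqref{Wright_distrib}, whose only $\sigma$-dependence sits in the factor $\ee^{-\sigma\eta}$ and in the normalising constant $C$. Differentiating $\int_0^1 \pi(\eta)\dd\eta = 1$ under the integral sign (legitimate since the integrand is bounded and smooth in $\sigma$ on $[0,1]$) gives $(\log C)' = \beta_1$, hence $\tfrac{\partial}{\partial\sigma}\pi(\eta) = (\beta_1-\eta)\pi(\eta)$; integrating $\eta^n$ against this yields $\beta_n' = \beta_1\beta_n - \beta_{n+1}$ immediately. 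The same identity can alternatively be checked purely algebraically by differentiating \eqref{rec_beta}, inserting the candidate $\beta_1\beta_n-\beta_{n+1}$, and matching terms using \eqref{rec_beta} at indices $n$ and $n+1$.

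For \eqref{alpha'} there is no comparably simple closed form, so I would differentiate \eqref{recursion} in $\sigma$. Writing $\delta_n \defeq \alpha_n'$, this produces the inhomogeneous linear recursion
\[
 (n+1+\sigma+\theta)\delta_n - (n+1+\theta\nu_1)\delta_{n+1} - \sigma\delta_{n-1} = \alpha_{n-1}-\alpha_n, \qquad n>0,
\]
subject to $\delta_0 = 0$ (since $\alpha_0 \equiv 1$) and $\lim_{n\to\infty}\delta_n = 0$ (inherited from $\lim_n \alpha_n = 0$). The core of the argument is then to verify that the candidate $\tilde\delta_n \defeq \alpha_{n-1}-K\alpha_n$ solves this system. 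For $n\geq 2$ the terms proportional to $K$ collect into $-K\big[(n+1+\sigma+\theta)\alpha_n - (n+1+\theta\nu_1)\alpha_{n+1} - \sigma\alpha_{n-1}\big]$, which vanishes by \eqref{recursion} at index $n$; the remaining terms reduce to exactly $\alpha_{n-1}-\alpha_n$ after eliminating $\sigma\alpha_{n-2}$ through \eqref{recursion} at index $n-1$.

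The case $n=1$ cannot appeal to \eqref{recursion} at index $0$, and it is precisely this equation that determines $K$. Imposing the $n=1$ relation with $\delta_0 = 0$, using $\alpha_0 = 1$ and $\theta = \theta\nu_0 + \theta\nu_1$, forces $K\sigma = (1+\theta\nu_1)(\alpha_0-\alpha_1) + \sigma + \theta\nu_0$, which is exactly the stated value of $K$. To conclude, I would invoke uniqueness: the candidate also satisfies $\tilde\delta_0 = 0$ and $\lim_n \tilde\delta_n = 0$ (both $\alpha_{n-1}$ and $K\alpha_n$ tend to $0$), so $\delta-\tilde\delta$ solves the homogeneous version of \eqref{recursion} with value $0$ at the origin and limit $0$ at infinity; by the uniqueness of the solution to \eqref{recursion} already used in Section~\ref{sec:difflimit} and in the proof of Lemma~\ref{lem:alphabeta}, this difference vanishes, giving $\alpha_n' = \alpha_{n-1} - K\alpha_n$.

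The main obstacle is not any single computation but the bookkeeping at the boundary. First, one must justify that $\sigma\mapsto\alpha_n(\sigma)$ and $\sigma\mapsto\beta_n(\sigma)$ are differentiable and that the recursions (or the integral) may be differentiated term by term; this can be obtained from the $\sigma$-rational dependence of the solutions of the finite-$N$ systems \eqref{rec_a}--\eqref{rec_b} together with the convergence recorded in Section~\ref{sec:difflimit}, or argued directly from the recursions by a standard perturbation argument. Second, the $n=1$ equation is the single place where the otherwise undetermined constant $K$ becomes fixed, so it must be handled with care and kept separate from the generic step $n\geq 2$.
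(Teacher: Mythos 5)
Your treatment of \eqref{beta'} is essentially the paper's: both differentiate the integral representation coming from Wright's density, the paper via $\beta_n=d_n/d_0$ with $d_n'=-d_{n+1}$, you via $\partial_\sigma\pi(\eta)=(\beta_1-\eta)\pi(\eta)$; these are the same computation. For \eqref{alpha'}, however, you take a genuinely different route. The paper does \emph{not} differentiate the recursion \eqref{recursion}: it differentiates the closed-form expression of Lemma~\ref{lem:alphabeta}, $\alpha_n=\sigma^n(\beta_{n+1}-\beta_{n+2})/\bigl(F_n(\beta_1-\beta_2)\bigr)$, using the already-established formula for $\beta_n'$, and then applies \eqref{recursion} once to tidy the result into $\alpha_{n-1}-K\alpha_n$. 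Your route --- differentiate \eqref{recursion}, verify that $\tilde\delta_n=\alpha_{n-1}-K\alpha_n$ solves the resulting inhomogeneous recursion (the $K$-terms cancelling by \eqref{recursion} at index $n$, the rest reducing via \eqref{recursion} at index $n-1$), pin $K$ down from the $n=1$ equation, and conclude by uniqueness --- is algebraically correct; I checked the generic step, the $n=1$ computation that yields $K\sigma=(1+\theta\nu_1)(\alpha_0-\alpha_1)+\sigma+\theta\nu_0$, and the linearity argument reducing uniqueness of the inhomogeneous problem to the uniqueness already asserted for \eqref{recursion}. What your approach buys is independence from Lemma~\ref{lem:alphabeta}; what it loses is that the two analytic prerequisites you flag are exactly what that lemma disposes of for free.

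Those two points are where your write-up is not yet a proof. First, differentiability of $\sigma\mapsto\alpha_n(\sigma)$ must be established \emph{before} you may write $\delta_n=\alpha_n'$ and differentiate \eqref{recursion} term by term; neither of your suggested fixes is carried out, and the one via $a_n^N\to\alpha_n$ needs uniform (in $\sigma$) convergence of derivatives, not just pointwise convergence of the $a_n^N$. Second, and more seriously, your uniqueness argument requires $\lim_{n\to\infty}\alpha_n'=0$, which you assert is ``inherited from $\lim_n\alpha_n=0$''; it is not --- pointwise decay of a family of functions says nothing about decay of their derivatives without some uniform control in $\sigma$. Both issues evaporate if you first prove Lemma~\ref{lem:alphabeta}: it exhibits $\alpha_n$ as an explicitly smooth function of $\sigma$ and of the $\beta$'s (which are smooth by the integral representation), and its superexponential prefactor $\sigma^n/F_n$ gives the decay of $\alpha_n'$ directly. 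So either route to \eqref{alpha'} ultimately wants that lemma, or a substitute uniform estimate, in hand.
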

\begin{proof}
	Recall from \eqref{Wright_distrib} that 
	\begin{equation*}
		\beta_n = \E[\cY_\infty] = \int_0^1 \eta^n \pi(\eta) \dd \eta = \frac{d_n}{d_0},
	\end{equation*}
	where 
	\begin{equation*}
		d_n \defeq \int_0^1 \eta^{n+\theta \nu^{}_1 -1} (1-\eta)^{\theta \nu^{}_0 -1} \ee^{-\sigma \eta} \dd \eta, \quad n \geq 0.
	\end{equation*}
	Clearly, $d_n' = - d_{n+1}$, and so
	\begin{equation*}
		\beta_n' = \frac{-d_{n+1} d_0 + d_n d_1}{d_0^2} = \beta_1 \beta_n - \beta_{n+1}
	\end{equation*}
	in line with \eqref{beta'}.
	We now turn to the $\alpha'_n$. A straightforward application of differentiation rules together with \eqref{beta'} to the expression for $\alpha_n$ in Lemma~\ref{lem:alphabeta} brings us to
	\begin{align*}
		\a'_n &= \frac{n\s^{n-1}}{F_n}\frac{\b^{}_{n+1}-\b^{}_{n+2}}{\b^{}_1-\b^{}_2} + \frac{\s^n}{F_n}\frac{\b^{}_{n+1}-\b^{}_{n+2}}{\b^{}_1-\b^{}_2}\frac{\b^{}_2-\b^{}_3}{\b^{}_1-\b^{}_2}-\frac{\s^n}{F_n}\frac{\b^{}_{n+2}-\b^{}_{n+3}}{\b^{}_1-\b^{}_2}\\
		&= \frac{n}{\s}\a^{}_n + \frac{1+\tt}{\s}\a^{}_1\a^{}_n-\frac{n+1+\tt}{\s}\a^{}_{n+1}.
	\end{align*}
	Applying now \eqref{recursion}, we obtain
	\begin{align*}
		 \a'_n &= \frac{1}{\s} [\s \a^{}_{n-1} - (1+\s+\t)\a^{}_n +(1+\tt)\a^{}_1\a^{}_n ]\\
		 &= \a^{}_{n-1}-K\a^{}_n,
	\end{align*}
	which shows \eqref{alpha'}.
\end{proof}
\begin{proof}[Proof of Theorem~\ref{thm:derivatives}]
	Let us first consider $q^{}_{\cA}(1,0)$ from Corollary~\ref{coro:diff_marg_f_q} and rewrite it as
	\begin{equation*}
		q^{}_{\cA}(1,0) = \theta\nu^{}_0 \frac{A}{A-B} \quad \text{with} \; A \defeq \sum_{n>0} \alpha^{}_{n-1} \beta_n \quad \text{and} \; B \defeq \sum_{n>0} \alpha^{}_{n} \beta_n.
	\end{equation*}
	Differentiating yields
	\begin{equation}\label{muprime}
		q_{\cA}'(1,0) = \theta \nu^{}_0 \frac{B'A - A'B}{(A-B)^2}.
	\end{equation}
	Evaluating $A'$ with the help of Lemma~\ref{lem:alpha'beta'} gives
	\begin{equation*}
		\begin{split}
			A' & = \sum_{n>0} (\alpha'_{n-1} \beta_n + \alpha^{}_{n-1} \beta'_n) = \sum_{n>0} \alpha'_{n} \beta_{n+1} + \sum_{n>0} \alpha^{}_{n-1} \beta'_n \\
			& = \sum_{n>0} (\alpha^{}_{n-1} - K \alpha^{}_n) \beta_{n+1} + \sum_{n>0} \alpha^{}_{n-1} (\beta_1 \beta_n - \beta_{n+1}) \\
			& = - K \sum_{n>0} \alpha^{}_n \beta_{n+1} + \beta_1 \sum_{n>0} \alpha^{}_{n-1} \beta_n = (\beta_1-K) A + K \beta_1,
		\end{split}
	\end{equation*}
	where we have used in the second step that $\alpha^{}_0 = 1$, so $\alpha'_0 = 0$. In a similar way, we obtain
	\begin{equation*}
		\begin{split}
			B' & = \sum_{n>0} (\alpha_{n-1} - K \alpha^{}_n) \beta_n + \sum_{n>0} \alpha^{}_n (\beta_1 \beta_n - \beta_{n+1}) \\
			& = \b_1 + (\b_1-K) \sum_{n>0} \alpha^{}_n \beta_n = (\beta_1 - K) B + \beta_1.
		\end{split}
	\end{equation*}
	So \eqref{muprime} evaluates to
	\begin{equation*}
		q_{\cA}'(1,0) = \frac{\theta \nu^{}_0 \b_1}{(A-B)^2}(A-KB) = \frac{\theta \nu^{}_0\b_1}{(A-B)^2} \sum_{n>0} \a'_n \b_n,
	\end{equation*}
	where we have used in the second step that, by the definitions of $A$ and $B$ as well as Lemma~\ref{lem:alpha'beta'}, we have
	\begin{equation*}
		A - KB = \sum_{n>0} (\a^{}_{n-1} - K \a^{}_n) \b_n= \sum_{n>0} \a'_n \b_n.
	\end{equation*}
	This proves \eqref{mu10'}. To verify \eqref{mu01'}, we use Corollary~\ref{coro:diff_marg_f_q} to write 
	\begin{equation*}
		q_{\cA}(0,1) = \theta\nu^{}_1 \frac{D-E}{D} \quad \text{with } \; D \defeq \sum_{n>0} \alpha^{}_{n-1} \delta_n \quad \text{and} \; E \defeq \sum_{n>0} \alpha^{}_{n} \delta_n,
	\end{equation*}
	where, for $n>0$, $\delta_n \defeq \b_{n-1}-\b_n$. Noting that $\delta'_n = \beta_1 \delta_n - \delta_{n+1}$, one then proceeds in a way perfectly analogous to the proof of \eqref{mu10'}. The signs of $q_{\cA}'(1,0)$ and $q_{\cA}'(0,1)$ now follow immediately since, for all $n>0$, $\a'_n>0$ by Proposition~\ref{prop:tailprobs} and $\delta_n>0$.
\end{proof}

\begin{prop}\label{prop:derivatives_det}
	The mutation rates on the ancestral line in the deterministic limit at stationarity satisfy
	\begin{equation*} 
		q'_{A}(1,0) = u \nu^{}_0 \frac{p'}{(1-p)^2} > 0 \quad \text{and } \; 
		q'_{A}(0,1) = - u \nu^{}_1 p' < 0,
	\end{equation*}
	where 
	\begin{equation*}
		p' = \frac{1}{2 u \nu^{}_1} \Big ( 1 - \frac{u+s-2u \nu^{}_1}{\sqrt{(u+s)^2 - 4 s u \nu^{}_1}} \Big ) > 0.
	\end{equation*}
\end{prop}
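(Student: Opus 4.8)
The plan is to read off both derivatives directly from the closed forms in Corollary~\ref{mutrates_det}, reducing the whole statement to computing $p'$ and determining its sign. Starting from $q_A(1,0)=u\nu_0/(1-p)$ and $q_A(0,1)=u\nu_1(1-p)$ and differentiating with respect to $s$, the chain rule gives at once
\[
  q'_A(1,0)=u\nu_0\frac{p'}{(1-p)^2},\qquad q'_A(0,1)=-u\nu_1\,p',
\]
so the two displayed expressions hold and their signs are entirely controlled by the sign of $p'$.

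To obtain $p'$, I would first rewrite the case $\nu_1>0$ of \eqref{eq_p} by collecting the terms under the square root over the common denominator $(u\nu_1)^2$, which turns $p$ into the cleaner form
\[
  p=\frac{1}{2u\nu_1}\Big((u+s)-\sqrt{(u+s)^2-4su\nu_1}\Big).
\]
Differentiating and simplifying the derivative of the radicand then yields exactly the asserted formula for $p'$; this step is purely mechanical.

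The only point that calls for an argument is the inequality $p'>0$. First I would record that the radicand is strictly positive throughout: since $0<\nu_1<1$ and $u>0$, $s\ge 0$, one has $(u+s)^2-4su\nu_1>(u+s)^2-4su=(u-s)^2\ge 0$, so $p$ is smooth and the manipulations above are legitimate. The claim $p'>0$ is equivalent to
\[
  (u+s)-2u\nu_1<\sqrt{(u+s)^2-4su\nu_1}.
\]
If the left-hand side is nonpositive this is immediate; otherwise both sides are positive, and squaring reduces the inequality to $4u^2\nu_1(1-\nu_1)>0$, which holds because $0<\nu_1<1$ and $u>0$. This gives $p'>0$, hence $q'_A(1,0)>0$ and $q'_A(0,1)<0$.

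No step here is genuinely difficult, so the main (modest) obstacle is simply handling the sign verification of $p'$ cleanly: one must split on the sign of $(u+s)-2u\nu_1$ before squaring rather than squaring blindly. For completeness I would also treat the degenerate case $\nu_1=0$ separately, where $p=s/(u+s)$ gives $p'=u/(u+s)^2>0$ directly and the two rate formulas follow as above.
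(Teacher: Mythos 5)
Your proposal is correct and follows essentially the same route as the paper: differentiate the closed forms of Corollary~\ref{mutrates_det}, compute $p'$ from \eqref{eq_p}, and reduce the sign of $p'$ to the inequality $u+s-2u\nu_1 < \sqrt{(u+s)^2-4su\nu_1}$, which both arguments settle via the identity $(u+s)^2-4su\nu_1-(u+s-2u\nu_1)^2=4u^2\nu_1(1-\nu_1)>0$. Your explicit case split on the sign of $u+s-2u\nu_1$ before squaring is in fact slightly more careful than the paper's one-line chain of (in)equalities, which tacitly assumes that quantity is nonnegative.
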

\begin{proof}
	The expressions for $p'$, $q'_{A}(1,0)$, and $q'_{A}(0,1)$ follow from \eqref{eq_p} and Corollary~\ref{mutrates_det} via elementary differentiation. Their signs then result from the fact that
	\begin{equation*}
		u+s-2u \nu^{}_1 = \sqrt{(u+s-2u \nu^{}_1)^2} = \sqrt{(u+s)^2 - 4 u \nu^{}_1 (u+s) + 4 (u \nu^{}_1)^2} < \sqrt{(u+s)^2 - 4 s u \nu^{}_1}.
	\end{equation*}
\end{proof}

\subsection{The mutation fluxes}
\begin{figure}
	\psfrag{1e-4}{$\times 10^{-4}$.}
	\includegraphics[width=0.6 \textwidth]{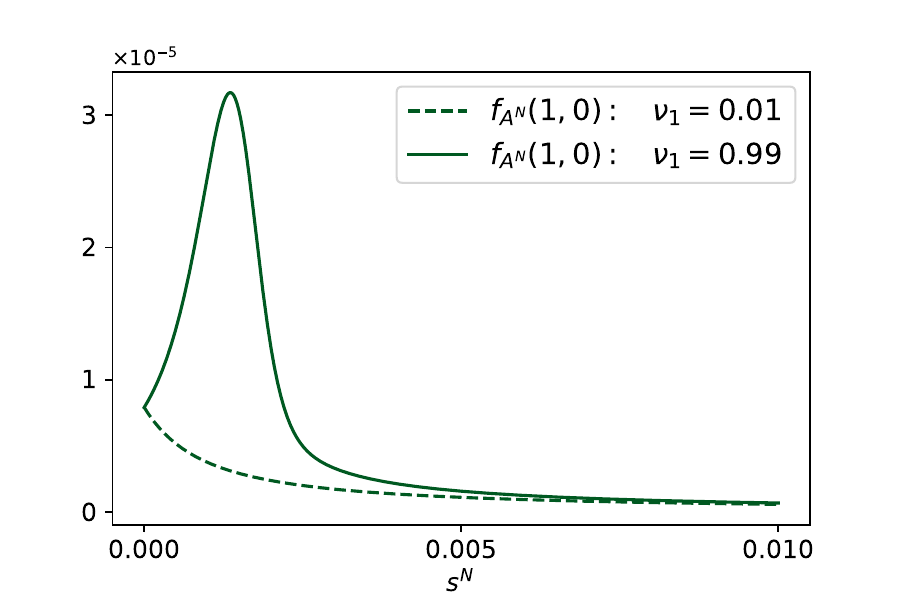}
	\caption{\label{fig:fluxes} Mutation fluxes at stationarity in the finite system as a function of $s^N$ for two values of $\nu_1$. Further parameters: $N=10^4, u^N=8 \cdot 10^{-4}$.}
\end{figure}
The mutation fluxes are important for comparison with data. We concentrate on the finite-$N$ case here; the behaviour in the limits is very similar. Recall that $f_{A^N}(0,1)=f_{A^N}(1,0)$ due to stationarity. Figure~\ref{fig:fluxes} shows how the fluxes vary with $s^N$. In the realistic case of large $\nu_1$ (and hence small $\nu_0$), the mutation flux initially \emph{increases} and then decreases with $s^N$. This may seem astonishing at first sight but has a simple explanation, which reveals itself when comparing with Figures~\ref{fig:anc_dist} and \ref{fig:mutrates}. At $s^N=0$, the common ancestor type process is simply the neutral mutation process at rates $q^{}_{A^N}(0,1)=u^N \nu_1$ and $q^{}_{A^N}(1,0)=u^N \nu_0$, respectively, so the proportion of type-1 individuals on the ancestral line at stationarity is ${\P(A^N_\infty=i)=\nu_i}$ for $i \in \{0,1\}$, and the stationary flux is $u^N \nu_1 \nu_0=u^N (1-\nu_0) \nu_0$ in either direction. Consider now the case $\nu_1=0.99$ (see Section~\ref{sec:biology} for why we chose the parameter values). With increasing $s^N$, $\P(A^N_\infty=0)$ increases; but type-0 individuals have the larger of the two mutation rates, namely $u^N \nu_1$, so the flux initially increases. This behaviour is only reversed at higher values of $s^N$, where type 0 has become dominant and the flux mainly experiences the decrease with $s^N$ of the deleterious mutation rate. In contrast (and unsurprisingly), the flux decreases monotonically with $s^N$ in the unrealistic situation where $\nu_0$ is close to 1.

\section{Theory and observation}
\label{sec:biology}
It is not our intention to provide a close comparison with the observed data --- after all, this is impossible with the simple two-type model. Nevertheless, we can indeed gain some principal insight in the spirit of a proof of concept, in particular by connecting the reduced mutation fluxes mentioned in the introduction to our ancestral structures.

The two-type model may be considered a toy model for a situation with a single wildtype allele or a set of such alleles, which are selectively favoured and lumped into type 0; and a large number of mutant alleles, which are disfavoured and lumped into type 1. The mutation rates between wildtype and mutant are then approximated by $u^N\nu_0 $ and $u^N\nu_1$; mutations within the wildtype and mutant classes are `invisible' in the two-type picture. Let us now discuss our results in the light of the mutation fluxes observed in phylogeny and pedigree studies.

The choice of mutation rates underlying most of our figures is motivated by the mitochondrial control region, to which most of the comparisons of pedigree and phylogenetic mutation rates refer (for example, \citet{Parsons97,Sigurdardottir00,Santos05}; the recent study of mutation rates in pedigrees and phylogeny in complete mitochondrial genomes by \citet{Connell22} reports very similar results). We set $u^N=8 \cdot 10^{-4}$; this is motivated by the overall mutation probability of $\approx 1.6 \cdot 10^{-6}$ per site and generation in the mitochondrial genome \citep{Connell22}, the size of the control region of $\approx 1000$ base pairs, and the guess that roughly half the mutations of the wildtype are neutral, that is, remain within the wildtype class (and are hence invisible in our model). The choice of $\nu_1=0.99$ and $\nu_0=0.01$ in our figures then amounts to assuming that every wildtype allele turns into a deleterious mutant at probability $7.9 \cdot 10^{-4}$ per generation, while every allele in the unfit class is restored to wildtype with probability $8 \cdot 10^{-6}$ per generation. These figures can, of course, be only wild guesses. The inverted choice $\nu_1=0.01$ and $\nu_0=0.99$ is present in our figures for comparison only, not meant to be biologically reasonable in any way.

We are now ready to discuss the mutation fluxes. Indeed, the observations refer to what we call mutation \emph{fluxes} (as opposed to rates), namely the mean number of mutation events per individual and generation, regardless of the parent and offspring's types. The fluxes in the two-type model only account for the deleterious and beneficial mutations; they are blind to the neutral mutations within the wildtype and the mutant classes. This is why Figure~\ref{fig:fluxes} does not tell the whole story and we have to start afresh.

In the pedigree study, one observes all mutations, from any members of the population (or the sample) to any offspring, regardless of whether their lines will be successful in the long run. In phylogeny, one only sees the mutations on the ancestral line, that is, from parents that are ancestors to offspring that have also survived in the long run. But which value of $s^N$ should be considered?

We are, of course, in no position to estimate the selection coefficient or to at least come up with an educated guess; firstly, because this is one of the most difficult tasks in population genetics, and second, because this requires working with a half-way reasonable fitness landscape on a large allelic space, rather than with a two-type model. What we can say, however, is that, for type 0 and 1 to be interpretable as wildtype and mutant, type 0 must be the dominant type by far and make up, say, at least 90\% of the population. Figure~\ref{fig:anc_dist} tells us that, with our mutation parameters, this requires $s^N \gtrapprox 0.008$. For definiteness, we set $b_1^N = 0.9$ and correspondingly $s_* \approx 0.008$, with which we work from now on. By Figure~\ref{fig:anc_dist}, we also have $\P(A^N_\infty=1) \approx 1.3 \cdot 10^{-4}$. Figure~\ref{fig:mutrates} further tells us that $q^N_{A^N}(0,1) \approx 9 \cdot 10^{-7}$ and $q^N_{A^N}(1,0) \approx 0.007$.

In the pedigree setting, it is reasonable to assume that the total mutation rate (that is, neutral plus selective) is the same in both classes. Let $v_0$ and $v_1$ be the neutral mutation rate of type-0 and type-1 individuals, respectively. In agreement with the observed total mutation rate, we therefore arrange $v_0$ and $v_1$ so that $v_0+u^N \nu_1 = 1.6 \cdot 10^{-3} = v_1+u^N \nu_0$, so $v_0 \approx 8.08 \cdot 10^{-4}$ and $v_1\approx 1.592 \cdot 10^{-3}$. The total expected mutation flux in the pedigree setting therefore is
\begin{equation}\label{pedigree_flux}
	\big (1-b_1^N \big ) [v_0+u^N \nu_1] + b_1^N [v_1+ u^N \nu_0] = 1.6 \cdot 10^{-3}.
\end{equation}
In contrast, the total mutation flux in the phylogenetic setting is
\begin{equation}\label{phylo_flux}
	\P(A^N_\infty=0) [v_0 + q^N_{A^N}(0,1)] + \P(A^N_\infty=1) [v_1 + q^N_{A^N}(1,0)] \approx 8.1 \cdot 10^{-4}.
\end{equation}
This reduction by a factor of close to 2 simply comes from the fact that most of the (descendants of the) deleterious mutations are lost in the long run in the phylogenetic setting, due to purifying selection. More generally, neglecting the small terms in \eqref{pedigree_flux} and \eqref{phylo_flux} (see the numerical values above), we see that the phylogenetic reduction factor is close to $(v_0+u^N)/v_0$ and can be arranged to take any value $\geq 1$ by a suitable choice of $v_0$. To explain the observed factor of about 10 solely by purifying selection, one needs $v_0 \approx u^N/9$, which seems unrealistically small. Nevertheless, the results of the toy model make the effect of selection on the mutation fluxes very transparent and confirm that purifying selection may substantially contribute to the observed phenomonon.

\subsection*{Acknowledgment}
This work was funded by the Deutsche Forschungsgemeinschaft (DFG, German Research Foundation) --- Project-ID 317210226 --- SFB 1283. It is our pleasure to thank two referees for extremely thorough reading and insightful comments.

\section{Appendix~A: recursions for the sampling probabilities \(b_n^N\)} \label{sec:app_A}
As previously done for the diffusion and deterministic limits (see \citet{Baake} and \citet{Baake1}), \citet[Section 2.2]{Esercito} have introduced the killed ASG in the finite case, a modification of the ASG that allows to determine whether all individuals in a sample at present are unfit. The crucial observation is that the type of an individual at present is uniquely determined by the most recent mutation that appears on its ancestral line. Hence, when we encounter, going backwards in time, a mutation on a line in the ASG, we do not need to follow it further into the past and we can prune it. Furthermore, if the mutation is beneficial, we know that at least one of the potential ancestors is fit and, because fit lines have the priority at every branching event, the individual at present will also be fit. We therefore kill the process, that is, send it to a cemetary state \(\Delta\). (Note that we cannot say which line is the actual ancestor.) This leads us to the following definition.
\begin{definition}[killed ASG, finite case]
	The killed ASG in the finite case starts with one line emerging from each of the \(n\in [N]\) individuals present in the sample. Each line branches at rate \(s^N(N-n)/N\), every ordered pair of lines coalesces at rate \(1/N\), every line is pruned at rate \(u^N\nu_1\), and the process moves to \(\Delta\) at rate \(u^N\nu_0\) per line.
\end{definition}
Let now \(R^N\defeq (R^N_r)_{r\geq 0}\) be the line-counting process of the killed ASG. It is a continuous-time Markov chain on \(N_{0,\Delta}\defeq [N]_0\cup \Delta\) and with transition rates
\begin{equation*}
	q^{}_{R^N}(n,n+1) = s^Nn\frac{N-n}{N},\hspace{0.9cm}
	q^{}_{R^N}(n,n-1) = n\frac{n-1}{N} + u^N\nu_1 n,\hspace{0.9cm}
	q^{}_{R^N}(n,\Delta) = u^N\nu_0 n
\end{equation*}
for \(n\in[N]\); all other rates are 0.
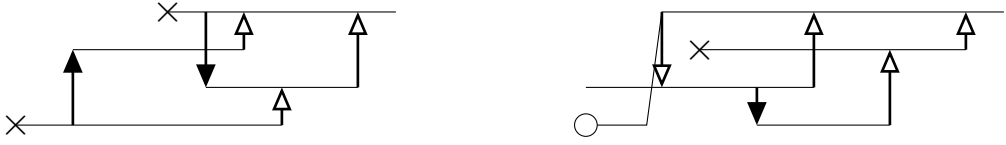
\begin{figure}
	\begin{tikzpicture}
	\draw (0,0)--(3.5,0);
	\draw (2.5,0.5)--(4.5,0.5);
	\draw (0.75,1)--(3,1);
	\draw (2,1.5)--(5,1.5);
	
	\draw[-{open triangle 45},line width=1] (4.5,0.5) -- (4.5,1.5);
	\draw[-{open triangle 45},line width=1] (3.5,0) -- (3.5,0.5);
	\draw[-{open triangle 45},line width=1] (3,1) -- (3,1.5);
	\draw[-{triangle 45},line width=1] (2.5,1.5) -- (2.5,.5);
	\draw[-{triangle 45},line width=1] (0.75,0) -- (0.75,1);
	
	\node at (2,1.5) {\scalebox{1.5}{\(\times\)}};
	\node at (0,0) {\scalebox{1.5}{\(\times\)}};
	
	\draw (7.5,0)--(8.3,0)--(8.5,1.5)--(13,1.5);
	\draw (9,1)--(12.5,1);
	\draw (7.5,0.5)--(10.5,0.5);
	\draw (9.75,0)--(11.5,0);

	\draw[-{open triangle 45},line width=1] (10.5,0.5) -- (10.5,1.5);
	\draw[-{open triangle 45},line width=1] (12.5,1) -- (12.5,1.5);
	\draw[-{open triangle 45},line width=1] (11.5,0) -- (11.5,1);
	\draw[-{triangle 45},line width=1] (9.75,0.5) -- (9.75,0);
	\draw[-{open triangle 45},line width=1] (8.5,1.5)--(8.5,0.5);
	
	\node at (9,1) {\scalebox{1.5}{\(\times\)}};
	\node[draw,circle,inner sep=3pt,fill=white!100] at (7.5,0) {};
	
\end{tikzpicture}
	\caption{The killed ASG either absorbs in the state \(0\) (left) or in the cemetary state \(\Delta\) due to a beneficial mutation (right).\label{Fig:kASG_finite}}
\end{figure}
Clearly, the states \(0\) and \(\Delta\) are absorbing while all other states are transient. Figure \ref{Fig:kASG_finite} shows the two different absorption scenarios. 
The connection with the sampling probabilities is now made by a special case of Corollary 2.4 of \citet{Esercito}.
\begin{coro}[sampling probabilities are absorption probabilities] \label{coro:b_n}
	The sampling probabilities and the line-counting process of the killed ASG are related via
	\begin{equation*}
		b^N_n= \E \Big [ \frac{Y_\infty^ {\underline n}}{N^{\underline n}}\Big ] = \P(R^N \text{ absorbs in } 0 \mid R^N_0=n),\hspace{1cm} n\in [N]_0.
	\end{equation*}
\end{coro}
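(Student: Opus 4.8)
The plan is to prove the second equality (the first being the definition \eqref{b^N_n}) by a \emph{sampling duality} between the forward type-frequency process $Y^N$ and the backward line-counting process $R^N$ of the killed ASG, followed by a passage to the limit $r\to\infty$. The natural duality function is the without-replacement sampling probability
\[
  H(k,n) \defeq \frac{k^{\underline n}}{N^{\underline n}}, \qquad k \in [N]_0,\ n \in [N]_0,
\]
extended by the conventions $H(k,0)\defeq 1$ and $H(k,\Delta)\defeq 0$; thus $H(k,n)$ is the probability that a sample of $n$ individuals drawn without replacement from a population with $k$ type-$1$ individuals consists entirely of type-$1$ individuals, and the choice $H(k,\Delta)=0$ encodes that at least one sampled line is fit.

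First I would establish the generator identity
\[
  \big(\cL_{Y^N} H(\cdot,n)\big)(k) = \big(\cL_{R^N} H(k,\cdot)\big)(n), \qquad k \in [N]_0,\ n \in [N],
\]
where $\cL_{Y^N}$ and $\cL_{R^N}$ are the generators associated with the rates \eqref{rates_Y} and with the killed-ASG rates of this appendix, acting on the first and the second argument of $H$, respectively. This is a term-by-term check that uses only the falling-factorial identities $(k+1)^{\underline n}-k^{\underline n} = n\,k^{\underline{n-1}}$ and $k^{\underline n}-(k-1)^{\underline n} = n\,(k-1)^{\underline{n-1}}$. Under it, the neutral reproduction rate $k(N-k)/N$ (which drives both the up- and down-jumps of $Y^N$) matches the coalescence rate $n(n-1)/N$ of $R^N$; the deleterious rate $u^N\nu_1(N-k)$ matches the pruning rate $u^N\nu_1 n$; the selective reproduction rate $s^N(N-k)k/N$ matches the branching rate $s^N n(N-n)/N$; and, crucially, the beneficial rate $u^N\nu_0 k$ matches the killing rate $u^N\nu_0 n$, because the down-move of $Y^N$ and the transition of $R^N$ into $\Delta$ (where $H$ vanishes) both contribute exactly $-u^N\nu_0\, n\, H(k,n)$.

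Given the generator identity, the duality relation $\E_k[H(Y_r^N,n)] = \E_n[H(k,R_r^N)]$ for all $r\ge 0$ follows by the standard argument, applying Dynkin's formula to $s\mapsto \E[H(Y_s^N, R^N_{r-s})]$ and checking that its derivative vanishes; there are no integrability issues, since the state spaces are finite and $H$ is bounded. I would then let $r\to\infty$. On the left, $Y^N$ is irreducible on $[N]_0$ with stationary law $\pi^N$, so $\E_k[H(Y_r^N,n)]\to \E[H(Y_\infty^N,n)] = b_n^N$, independently of $k$. On the right, $R^N$ is absorbed almost surely in $\{0,\Delta\}$, and since $H(k,0)=1$ and $H(k,\Delta)=0$, we get $\E_n[H(k,R_r^N)]\to \P(R^N \text{ absorbs in } 0 \mid R_0^N=n)$, also independently of $k$; equating the two limits yields the claim.

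The main obstacle is the generator computation, and within it the correct bookkeeping of the \emph{asymmetric} mutation terms: a deleterious mutation on $Y^N$ raises the type-$1$ count and dually lowers $R^N$ by one, whereas a beneficial mutation lowers the type-$1$ count and dually kills $R^N$; keeping straight which direction sends $R^N$ to the cemetery $\Delta$ (and hence to a state where $H$ vanishes) is where an error would most likely creep in. Everything else is routine, and in the limit one need only observe that both sides become independent of the spectator variable, which is precisely what makes the two marginal quantities coincide.
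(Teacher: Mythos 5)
Your proposal is correct, and I checked the generator identity explicitly: with $H(k,n)=k^{\underline n}/N^{\underline n}$, $H(k,\Delta)=0$, both sides reduce to
\[
\tfrac{n}{N^{\underline n}}\Big[\tfrac{n-1}{N}(N-k)k^{\underline{n-1}}+u^N\nu_1(N-k)k^{\underline{n-1}}-s^N\tfrac{N-k}{N}k^{\underline n}-u^N\nu_0\,k^{\underline n}\Big],
\]
so the matching of neutral reproduction with coalescence, selective reproduction with branching, deleterious mutation with pruning, and beneficial mutation with killing works out exactly as you describe, and the passage $r\to\infty$ is unproblematic on a finite state space (absorption of $R^N$ in $\{0,\Delta\}$ is almost sure because $u^N\nu_1>0$ makes every state in $[N]$ transient). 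The comparison with the paper is slightly unusual: the paper does not prove this corollary at all, but obtains it as a special case of Corollary~2.4 of \citet{Esercito}, where the killed ASG is constructed precisely so that this sampling duality holds. So your argument is not a different route in substance --- it is the standard duality mechanism underlying the cited result --- but it has the merit of being self-contained, making visible exactly which rate of $Y^N$ pairs with which rate of $R^N$, and in particular why the cemetery state must carry the value $H(k,\Delta)=0$ to absorb the beneficial-mutation term $-u^N\nu_0\,n\,H(k,n)$ on both sides.
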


This connection now yields the sampling recursion.

\begin{coro}[sampling recursion]
	The sampling probabilities satisfy the recursion
	\begin{equation*}
		\Big (\frac{n-1}{N}+ s^N\frac{N-n}{N} + u^N \Big )b^N_n = \Big (\frac{n-1}{N}+u^N\nu_1\Big )b^N_{n-1} + s^N \frac{N-n}{N}b^N_{n+1}, \quad n \in [N],
	\end{equation*}
	together with the boundary conditions \(b^N_0=1\) and \(b^N_{N+1}=0\).
\end{coro}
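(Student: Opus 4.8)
The plan is to run a first-step (first-jump) analysis on the line-counting process $R^N$ of the killed ASG, using Corollary~\ref{coro:b_n} to identify $b_n^N$ with the probability that $R^N$, started from $n$, is absorbed in the state $0$ rather than in the cemetery $\Delta$. Since $R^N$ lives on the finite state space $[N]_0 \cup \{\Delta\}$, whose only absorbing states are $0$ and $\Delta$ while all other states are transient, the chain is non-explosive and absorbs almost surely. The absorption-probability function $n \mapsto \P(R^N \text{ absorbs in } 0 \mid R^N_0 = n)$, with boundary values $b_0^N = 1$ and $b_\Delta^N = 0$, is harmonic for the embedded jump chain, and the recursion will be exactly this harmonicity (first-step) equation.

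First I would record the total exit rate from a transient state $n \in [N]$, namely
\[
  Q_n = s^N n \frac{N-n}{N} + n \frac{n-1}{N} + u^N \nu_1 n + u^N \nu_0 n = s^N n \frac{N-n}{N} + n \frac{n-1}{N} + u^N n,
\]
where the last equality uses $\nu_0 + \nu_1 = 1$ to merge the deleterious-pruning and beneficial-killing contributions into a single mutation term. Conditioning on the first jump, the walker moves to $n+1$, to $n-1$, or to $\Delta$ with probabilities $q^{}_{R^N}(n,\cdot)/Q_n$; the jump to $\Delta$ contributes $0$ to the probability of absorbing in $0$, so the first-step equation reads
\[
  Q_n b_n^N = s^N n \frac{N-n}{N}\, b_{n+1}^N + \Big( n \frac{n-1}{N} + u^N \nu_1 n \Big)\, b_{n-1}^N.
\]
Substituting the expression for $Q_n$ and dividing through by $n$ (legitimate for $n \geq 1$) yields precisely the claimed recursion.

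Finally I would check the boundary conditions. The value $b_0^N = 1$ is immediate from the defining formula \eqref{b^N_n}, since $(Y^N_\infty)^{\underline 0}/N^{\underline 0} = 1$; probabilistically, absorption in $0$ started from $0$ is certain. For the upper boundary, note that at $n = N$ the branching rate $s^N N (N-N)/N$ vanishes, so the coefficient of $b_{N+1}^N$ in the recursion is zero; the value assigned to $b_{N+1}^N$ is therefore irrelevant, and setting $b_{N+1}^N = 0$ is a harmless convention that keeps the single formula valid uniformly over $n \in [N]$. The only point requiring care—rather than a genuine obstacle—is the bookkeeping of the three competing events out of $n$: one must remember that beneficial mutations route the chain to $\Delta$ and hence enter the balance with weight $0$, and that the factor $n$ cancels cleanly only after combining $u^N\nu_0 + u^N\nu_1 = u^N$.
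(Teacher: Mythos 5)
Your proposal is correct and follows exactly the paper's argument: a first-step decomposition of the absorption probabilities of the killed ASG line-counting process $R^N$, using Corollary~\ref{coro:b_n} and the fact that $\Delta$ is absorbing (so jumps to $\Delta$ contribute zero). The computation, the cancellation of the factor $n$, the merging $u^N\nu_0+u^N\nu_1=u^N$, and the observation that $b^N_{N+1}=0$ is a harmless convention because the branching rate vanishes at $n=N$ are all accurate.
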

\begin{proof}
	The proof is a simple first-step decomposition of the absorption probabilities for \(n \in [N]\), which respects the fact that \(\Delta\) is absorbing.
\end{proof}

\section{Appendix~B: The process \(\mathbb{A}^N\)}\label{sec:app_B}
In Section \ref{section_backward}, we have introduced and verbally described the process \(\mathbb{A}^N\defeq(\mathbb{A}^N_t)_{t\in [0,T]}\), defined via \(\mathbb{A}_t^N\defeq (\Xi_t^N,H_{(T-t)-}^N)\). We now consider the process in detail and prove that, at stationarity, it satisfies the Markov property. This is a non-trivial extension of the classical result for Markov chains that states that the time reversal of a Markov chain at stationarity satisfies the Markov property. In our case, we only do a time reversal of the process $H^N$, while $\Xi^N$ continues to run forward in time. The key to the proof will be the fact that our processes are coupled via the graphical representation, where past and future are independent.

In order to simplify the notation, we will omit the upper index \(N\) from processes and parameters in what follows. Let \(\xi \defeq (\xi_k)_{k\in[N]}, \xi'\defeq (\xi'_k)_{k\in[N]}\in\left\{0,1\right\}^N\) be two fixed configurations of \(\Xi\) and let \(\eta \defeq (\eta_k)_{k\in[N]},\eta'\defeq (\eta_k)_{k\in[N]} \in ([N]^*_{\infty})^N\) be two fixed configurations of \(H\); for future need we use the convention that \(i\leq j*\) if and only if \(i \leq j\).
\begin{prop}\label{prop:A_is_Markov}
	Assume that \(\Xi_0\) and \(H_0\) are chosen independently and according to the stationary distributions of the processes \(\Xi\) and \(H\), respectively. Then, for any \(0\leq t\leq T\), the law of \(\mathbb{A}\) is invariant in time and given by the product of the stationary distributions of \(\Xi\) and \(H\). Moreover, the process \(\mathbb{A}\) satisfies the Markov property:
	\begin{align*}
		&\P\big(\Xi_{t+\epsilon}=\xi', H_{(T-(t+\epsilon))-}=\eta' \mid \Xi_{t}=\xi,H_{(T-t)-} =\eta, \overline \Xi=\overline \xi, \overline H = \overline \eta \big)\\
		&\hspace{0.5cm} = \P\big(\Xi_{t+\epsilon}=\xi', H_{(T-(t+\epsilon))-}=\eta' \mid \Xi_{t}=\xi,H_{(T-t)-} =\eta\big),
	\end{align*}
	where \(\overline \Xi \defeq (\Xi_{t_1}, \ldots\Xi_{t_n})\) and \(\overline H\defeq (H_{(T-t_1)-},\ldots,H_{(T-t_n)-})\) with \(0\leq t_1<\ldots<t_n<t\), \(\overline \xi\in\big(\left\{0,1\right\}^N\big)^n\), and \(\overline \eta\in\big(([N]^*_\infty)^N\big)^n\).
\end{prop}
\begin{proof}
	The invariance and the product form of the law of \(\mathbb{A}\) have already been proved in Section \ref{section_backward}. For the Markov property, note first that, thanks to standard properties of the conditional probability, we have
	\begin{align}
		\begin{split}
		&\P\big(\Xi_{t+\epsilon}=\xi', H_{(T-(t+\epsilon))-}=\eta' \mid \Xi_{t}=\xi,H_{(T-t)-} =\eta, \overline \Xi=\overline \xi,\overline H = \overline \eta\big)\\
		&\hspace{0.5cm}=\P\big(\Xi_{t+\epsilon}=\xi'\mid H_{(T-(t+\epsilon))-}=\eta', \Xi_{t}=\xi,H_{(T-t)-} =\eta, \overline \Xi=\overline \xi,\overline H = \overline \eta\big)\cdot\\
		&\hspace{1cm}\cdot\P\big(H_{(T-(t+\epsilon))-} = \eta'\mid \Xi_{t}=\xi,H_{(T-t)-} =\eta, \overline \Xi=\overline \xi,\overline H = \overline \eta\big).
		\end{split}\label{eq:lemma_markov_1}
	\end{align}
	We start by analysing the first factor on the right-hand side.	We know that \(\Xi_{t+\epsilon}\) is a deterministic function, say \(g\), of the value of \(\Xi_t\) and the graphical picture in \((t,t+\epsilon]\). So we have
	\begin{equation}
		\begin{split}
			&\P\big(\Xi_{t+\epsilon}=\xi'\mid H_{(T-(t+\epsilon))-}=\eta', \Xi_{t}=\xi,H_{(T-t)-} =\eta, \overline \Xi=\overline \xi,\overline H = \overline \eta\big)\\ 
			&\hspace{0.5cm} = \P\big(g(\Xi_{t},G((t,t+\epsilon]))=\xi'\mid H_{(T-(t+\epsilon))-}=\eta', \Xi_{t}=\xi,H_{(T-t)-} =\eta, \overline \Xi=\overline \xi,\overline H = \overline \eta\big)\\
			&\hspace{0.5cm} = \P\big(g(\xi,G((t,t+\epsilon]))=\xi'\mid H_{(T-(t+\epsilon))-}=\eta', \Xi_{t}=\xi,H_{(T-t)-} =\eta, \overline \Xi=\overline \xi,\overline H = \overline \eta\big)\\
			&\hspace{0.5cm} = \P\big(g(\xi,G((t,t+\epsilon]))=\xi'\mid H_{(T-(t+\epsilon))-}=\eta', \Xi_{t}=\xi,H_{(T-t)-} =\eta,\overline H = \overline \eta\big),
		\end{split}\label{eq:lemma_markov_2}
	\end{equation}
	where, in the last step, we have used that \(G\big((t,t+\epsilon]\big)\) is independent of \(\overline \Xi\). We also know that \(\overline H\) is a deterministic function, say \(h\), of \(H_{(T-t)-}\) and the graphical picture in \([0,t)\), so we have
	\begin{equation}
		\begin{split}
			&\P\big(g(\xi,G((t,t+\epsilon]))=\xi'\mid H_{(T-(t+\epsilon))-}=\eta', \Xi_{t}=\xi,H_{(T-t)-} =\eta,\overline H = \overline \eta\big)\\
			&\hspace{0.5cm} = \P\big(g(\xi,G((t,t+\epsilon]))=\xi'\mid H_{(T-(t+\epsilon))-}=\eta', \Xi_{t}=\xi,H_{(T-t)-} =\eta,\\
			&\hspace{1.5cm}h(H_{(T-t)-},G([0,t) )= \overline \eta\big)\\
			&\hspace{0.5cm} = \P\big(g(\xi,G((t,t+\epsilon]))=\xi'\mid H_{(T-(t+\epsilon))-}=\eta', \Xi_{t}=\xi,H_{(T-t)-} =\eta,\\
			&\hspace{1.5cm} h(\eta,G([0,t) )= \overline \eta\big)\\
			&\hspace{0.5cm} = \P\big(\Xi_{t+\epsilon}=\xi'\mid H_{(T-(t+\epsilon))-}=\eta', \Xi_{t}=\xi,H_{(T-t)-} =\eta\big),
		\end{split}\label{eq:lemma_markov_3}
	\end{equation}
	where, in the last step, we have used the independence of \(G\big([0,t)\big)\) and \(G\big((t,t+\epsilon]\big)\).
	
	We now consider the second factor on the right-hand side of \eqref{eq:lemma_markov_1}. Since \(H_{(T-(t+\epsilon))-}\) is independent of both \(\Xi_t\) and \(\overline \Xi\), we have
	\begin{align}
		\begin{split}
			&\P\big(H_{(T-(t+\epsilon))-} = \eta'\mid \Xi_{t}=\xi,H_{(T-t)-} =\eta, \overline \Xi=\overline \xi,\overline H = \overline \eta\big)\\
			&\hspace{0.5cm} = \P\big(H_{(T-(t+\epsilon))-} = \eta'\mid H_{(T-t)-} =\eta,\overline H = \overline \eta\big)\\
			&\hspace{0.5cm} = \P\big(H_{(T-(t+\epsilon))-} = \eta'\mid H_{(T-t)-} =\eta\big),
		\end{split}\label{eq:lemma_markov_4}
	\end{align}
	where, in the last step, we have used that the time reversal of \(H\) satisfies the Markov property at stationarity.
	
	We now use \eqref{eq:lemma_markov_2}--\eqref{eq:lemma_markov_4} to rewrite \eqref{eq:lemma_markov_1} as
	\begin{equation}
		\begin{split}
			&\P\big(\Xi_{t+\epsilon}=\xi', H_{(T-(t+\epsilon))-}=\eta' \mid \Xi_{t}=\xi,H_{(T-t)-} =\eta, \overline \Xi=\overline \xi,\overline H = \overline \eta\big)\\
			&\hspace{0.5cm}=\P\big(\Xi_{t+\epsilon}=\xi'\mid H_{(T-(t+\epsilon))-}=\eta', \Xi_{t}=\xi,H_{(T-t)-} =\eta\big)\cdot\\
			&\hspace{1.5cm}\cdot\P\big(H_{(T-(t+\epsilon))-} = \eta'\mid H_{(T-t)-} =\eta\big).
		\end{split}\label{eq:lemma_markov_5}
	\end{equation}
	In addition, we have
	\begin{equation}
		\begin{split}
			&\P\big(\Xi_{t+\epsilon}=\xi', H_{(T-(t+\epsilon))-}=\eta' \mid \Xi_{t}=\xi,H_{(T-t)-} =\eta\big)\\
			&\hspace{0.5cm}=\P\big(\Xi_{t+\epsilon}=\xi'\mid H_{(T-(t+\epsilon))-}=\eta', \Xi_{t}=\xi,H_{(T-t)-} =\eta\big)\cdot\\
			&\hspace{1.5cm}\cdot\P\big(H_{(T-(t+\epsilon))-}=\eta'\mid \Xi_{t}=\xi,H_{(T-t)-} =\eta\big)\\
			&\hspace{0.5cm}=\P\big(\Xi_{t+\epsilon}=\xi'\mid H_{(T-(t+\epsilon))-}=\eta', \Xi_{t}=\xi,H_{(T-t)-} =\eta\big)\cdot\\
			&\hspace{1.5cm}\cdot\P\big(H_{(T-(t+\epsilon))-}=\eta'\mid H_{(T-t)-} =\eta),
		\end{split}\label{eq:lemma_markov_6}
	\end{equation}
	where, in the last step, we have used the independence of \(H_{(T-(t+\epsilon))-}\) and \(\Xi_t\). From \eqref{eq:lemma_markov_5} and \eqref{eq:lemma_markov_6} we can conclude that the process \(\mathbb{A}\) has the Markov property.
\end{proof}
Computing the rates of the process \(\mathbb{A}\) is an interesting exercise that yields additional insight; however, it is somewhat lengthy, why we spare it to the reader here. It will appear in the forthcoming PhD thesis of Enrico Di Gaspero.

\end{document}